\documentclass[12pt]{amsart}
\usepackage{amsmath,amsfonts,amssymb,amsthm}
\usepackage{graphicx,color}
\usepackage{epsfig, tikz}
\usepackage{url}
\DeclareMathRadical{\sqrtsign}{symbols}{"70}{largesymbols}{"70}
%

%
%
%
%
\newcommand{\bb}{\mathbb}


%

\newcommand{\integers}{{\bb Z}}
\newcommand{\natls}{{\bb N}}

\newcommand{\reals}{{\bb R}}

%
%
\newlength{\figboxwidth}             
\setlength{\figboxwidth}{5.8in}

%
\newcommand{\appendixmode}{
        \setcounter{section}{0}
        \renewcommand{\thesection}{\Alph{section}}
        \renewcommand{\thesubsection}{\Alph{section}.\arabic{subsection}}
}

\setlength{\textwidth}{6.0in}
\setlength{\oddsidemargin}{0.25in}
\setlength{\evensidemargin}{0.25in}


%
\renewcommand{\bold}[1]{\medskip \noindent {\bf #1 }\nopagebreak}

%






\newcommand{\cross}{\times}

\newcommand{\st}{\;\: : \;\:}         




\newcommand{\zed}{\integers}



\def\@ifundefined#1#2#3%
  {\expandafter\ifx\csname#1\endcsname\relax#2\else#3\fi}

\@ifundefined{theoremstyle}{
}{
\theoremstyle{plain} 
}
\newtheorem{theorem}{Theorem}[section]
\newtheorem{prop}[theorem]{Proposition}
\newtheorem{proposition}[theorem]{Proposition}
\newtheorem{lemma}[theorem]{Lemma}
\newtheorem{cor}[theorem]{Corollary}
\newtheorem{corollary}[theorem]{Corollary}
\newtheorem{claim}[theorem]{Claim}

\newtheorem{conj}[theorem]{Conjecture}

\@ifundefined{theoremstyle}{
}{
\theoremstyle{definition} 
}
\newtheorem{definition}[theorem]{Definition}
\newtheorem{question}[theorem]{Question}

\theoremstyle{remark}
\newtheorem{remark}[theorem]{Remark}
\newtheorem*{remark*}{Remark}


\newcommand{\cM}{{\mathcal M}}

\mathchardef\GG="321D
%



%

\newcommand{\mc}[1]{{}}  
\newcommand{\mcc}[1]{{}}

\newcommand{\FP}[1]{\langle\langle #1 \rangle\rangle}

\numberwithin{equation}{section}




\newcommand{\fp}[1]{\| #1\|}

\begin{document}\title[M\"{o}bius disjointness for 3-IETs]{Mobius disjointness for interval exchange transformations on three intervals}

\author[J. Chaika]{Jon Chaika}
\author[A. Eskin]{Alex Eskin}\maketitle
\begin{abstract}
We show that Sarnak's conjecture on Mobius disjointness holds for interval exchange transformations  on three intervals (3-IETs) that satisfy a mild diophantine condition.
\end{abstract}
\section{Introduction}
Let $\mu: \natls \to \{-1, 0, 1\}$ denote the M\"obius function,
namely, $\mu(n) = 0$ if $n$ is not square-free, $\mu(n) = 1$ if $n$
is square-free and has an even number of prime factors, and $\mu(n) = -1$
if $n$ is square-free and has an odd number of prime factors. 

Let $X$ be a topological space, and let $T: X \to X$ be an invertible
map. We think of the map $T$ as a dynamical system. Peter Sarnak made
the following far-reaching conjecture:

\begin{conj}[M\"obius Disjointness]
\label{conj:Sarnak}
Suppose the topological entropy of $T$ is $0$. Then, for any $x \in
X$, and any (continuous) function $f: X \to \reals$,  
\begin{equation}
\label{eq:Mobius:disjointness}
\lim_{N \to \infty} \frac{1}{N} \sum_{n=0}^{N-1} f(T^n x) \mu(n) = 0. 
\end{equation}
\end{conj}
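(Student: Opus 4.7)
The plan is to attack Conjecture~\ref{conj:Sarnak} through the K\'atai--Bourgain--Sarnak--Ziegler (KBSZ) orthogonality criterion, which reduces M\"obius disjointness to a purely dynamical correlation estimate. Setting $a_n = f(T^n x)$, the KBSZ criterion asserts that \eqref{eq:Mobius:disjointness} holds provided that for every pair of distinct primes $p,q$ the bilinear sum
\[
\frac{1}{N}\sum_{n=0}^{N-1} f(T^{pn}x)\,\overline{f(T^{qn}x)}
\]
tends to the product of the individual means as $N \to \infty$. So the task shifts from an estimate about $\mu$ to an estimate about the system $(X,T)$ itself: one must understand the joint distribution of the orbit $(T^{pn}x, T^{qn}x)$ in $X\times X$ for coprime $p,q$.

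The next step would be to reinterpret the bilinear sum as a statement about joinings: it is the ergodic average of $f\otimes\bar f$ under $T^p\times T^q$ evaluated at the diagonal point $(x,x)$. If one could show that, for each pair $p\neq q$ of primes, every weak-$\ast$ limit of the empirical measures $\tfrac{1}{N}\sum_{n<N} \delta_{(T^{pn}x, T^{qn}x)}$ is a product of $T$-invariant measures on the two factors (a ``disjointness of distinct prime powers of $T$'' property), then the correlation bound would follow from approximating $f$ in $L^2$. The zero-entropy hypothesis is used here only morally: it should prevent the orbit of $(x,x)$ from exhibiting the kind of independence needed to build a nontrivial joining with itself along distinct multiplicative time changes.

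The main obstacle --- and the reason the conjecture is still open --- is that zero topological entropy is drastically weaker than the structural conditions (unique ergodicity, rigidity, nilpotent structure, equicontinuity, bounded complexity) under which the joining disjointness can actually be verified. There exist perfectly well-behaved zero-entropy homeomorphisms with very badly behaved generic points, and no purely soft argument is known that rules out nontrivial self-joinings of $T^p$ and $T^q$ on the basis of $h_{\mathrm{top}}(T)=0$ alone. A realistic attack on the full conjecture therefore seems to require either (i) a Furstenberg--Host--Kra style structure theorem decomposing an arbitrary zero-entropy topological system into a ``structured'' factor (where nilsequence methods apply, as in Green--Tao) and a ``pseudorandom'' complement (amenable to the multiplicative input from $\mu$), or (ii) a version of Chowla's conjecture on higher autocorrelations of $\mu$ strong enough to absorb the dynamics. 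Both lie beyond current technology.

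Faced with this, the only viable plan I see is to give up on the full generality and instead target a restricted class of zero-entropy systems for which the KBSZ correlation bound can be proved directly, using quantitative renormalization or Diophantine input specific to the class. This is precisely the strategy of the present paper: for three-interval exchange transformations satisfying a mild Diophantine condition, the Rauzy--Veech renormalization provides enough structure on the orbits $(T^{pn}x, T^{qn}x)$ to verify the KBSZ hypothesis by hand. I would expect any further progress on Conjecture~\ref{conj:Sarnak} in the near term to take the same form --- proving the conjecture in successively larger classes of zero-entropy systems rather than attacking it in full generality.
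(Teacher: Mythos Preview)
The statement you were asked to ``prove'' is Sarnak's conjecture itself, which the paper does not prove and which remains open. You correctly recognize this: your proposal is not a proof but a strategic outline, and it accurately identifies both why the general statement is out of reach and how the paper proceeds in the restricted 3-IET setting. In that sense there is nothing to correct --- you have diagnosed the situation properly.

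A couple of small points of comparison with the paper's actual route in the restricted case. First, the paper does not verify the KBSZ bilinear correlation hypothesis directly; instead it proves a joining-theoretic statement (Theorem~\ref{theorem:disjoint:powers}): for any $n$, the set $B_n = \{m < n : T^m \text{ not disjoint from } T^n\}$ is lacunary with a uniform ratio $\kappa > 1$. The passage from this to M\"obius disjointness is then carried out in Appendix~\ref{sec:appendix:A} via a Harper-type argument, which is morally the same as KBSZ but organized around disjointness of powers rather than pairwise prime correlations. Your description (``disjointness of distinct prime powers of $T$'') is close to this in spirit. Second, the renormalization used is not Rauzy--Veech induction but the geodesic flow on the moduli space $\mathcal{M}_{1,2}$ of tori with two marked points, together with the continued-fraction expansion of the underlying rotation number; the Diophantine condition (A0)--(A9) is phrased in those terms. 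The disjointness itself is proved via a Ratner-style shearing argument (Proposition~\ref{prop:Ratner:disjointness} and Proposition~\ref{prop:fact:qfact}), not via direct orbit-counting in the renormalization tower.
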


\begin{definition}[IET]
\label{def:IET}
An interval exchange transformation (IET) is given by a vector
$\vec{\ell} = (\ell_1, \dots, \ell_d) \in \reals^d_+$ and a permutation $\pi$ on
$\{ 1, \dots, d \}$. From $\vec{\ell}$ we obtain
$d$-subintervals of $[0, \sum_{i=1}^d \ell_i)$ as follows:
\begin{displaymath}
I_1 = [0,\ell_1), \quad I_2 = [\ell_1, \ell_1 + \ell_2), \dots, I_d =
[\sum_{i=1}^{d-1} \ell_d, \sum_{i=1}^d \ell_d). 
\end{displaymath}
 Now we obtain a $d$-Interval Exchange Transformation $T = T_{\pi,
   \vec{\ell}} :[0, \sum_{i=1}^d \ell_i) \to [0, \sum_{i=1}^d \ell_i)$
 which exchanges the intervals according to $\pi$. More precisely, if
 $x \in I_j$, then 
\begin{displaymath}
T(x) = x - \sum_{k < j} \ell_k + \sum_{\pi(k) < \pi(j)} \ell_k.
\end{displaymath}
\end{definition}

It is well known that the topological entropy of 
any interval exchange transformation is $0$. Thus,  if
Conjecture~\ref{conj:Sarnak} is true, then
(\ref{eq:Mobius:disjointness}) should hold for any interval exchange
transformation. 

In this paper, we consider only the case $d=3$. Extending our results
e.g.\ to $d=4$ will require fundamental new ideas. 

\begin{lemma}
\label{lemma:3iet:to:rotation}
If $T$ is a $3$-IET with permutation $\begin{pmatrix}1&2&3\\ 3 & 2 & 1\end{pmatrix}$, then $T$ is also the induced map of a rotation on
an interval. 
\end{lemma}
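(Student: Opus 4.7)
The plan is to realize $T$ explicitly as the first-return map of an irrational rotation on a circle, by extending the base interval on the right by one extra block and choosing the rotation amount equal to $\ell_2+\ell_3$.

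Concretely, set $L := \ell_1+\ell_2+\ell_3$, append an interval of length $\ell_2$ to obtain $J := [0,L+\ell_2)$, and define the rotation $R: J \to J$ by
\[
R(x) \EQ x + (\ell_2+\ell_3) \pmod{L+\ell_2}.
\]
The claim I would verify is that the first-return map of $R$ to $[0,L) \subset J$ coincides with $T$. Since the lemma statement is about the existence of such a realization, the whole content is in finding the correct width of the added block (namely $\ell_2$) and the correct rotation amount (namely $\ell_2+\ell_3$). Once these are in hand the rest is a mechanical three-case check.

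The verification runs interval by interval. On $I_1 = [0,\ell_1)$ one has $R(x) = x+\ell_2+\ell_3 \in [\ell_2+\ell_3,L)$ with no wraparound, which matches the formula for $T(x)$, and the image lies in $[0,L)$, so the return time is $1$. On $I_3 = [\ell_1+\ell_2,L)$ one checks that the rotation does wrap around (since $x+\ell_2+\ell_3 \geq L+\ell_2$), giving $R(x) = x-(\ell_1+\ell_2) \in [0,\ell_3) \subset [0,L)$, again matching $T(x)$ with return time $1$. The only interesting case is $I_2 = [\ell_1,\ell_1+\ell_2)$: here one application of $R$ lands in the added segment $[L,L+\ell_2)$, so $x$ does not return immediately. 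A second application of $R$ produces $R^2(x) = x+\ell_3-\ell_1 \in [\ell_3,\ell_2+\ell_3) \subset [0,L)$, which is precisely $T(x)$, and the return time is $2$.

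This shows $T$ is the induced map of $R$ on $[0,L)$, completing the proof. The only real obstacle is the initial guess of the right extension length and rotation amount; both are forced by demanding that $I_1$ and $I_3$ return after one step (which dictates that $R$ on $I_1$ equals $T$ directly, hence rotation by $\ell_2+\ell_3$) and that the extension absorb the excess from $I_2$ (which dictates length $\ell_2$). After that, the three case checks are immediate arithmetic and present no further difficulty.
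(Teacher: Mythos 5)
Your construction is exactly the one in the paper: you extend $[0,L)$ by a block of length $\ell_2$ to get $[0,\ell_1+2\ell_2+\ell_3)$ and rotate by $\ell_2+\ell_3$, which is precisely the paper's $\hat{R}$. The three-case verification you supply (return time $1$ on $I_1$ and $I_3$, return time $2$ on $I_2$) is correct and simply spells out the detail the paper leaves to the reader.
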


\begin{proof}
Let $\hat{R}: [0, \ell_1 + 2\ell_2 + \ell_3) \to [0,\ell_1 + 2 \ell_2 +
\ell_3)$ be given by
\begin{displaymath}
\hat{R}(x) = \begin{cases}
x + \ell_2 + \ell_3 & \text{if $x \le \ell_1 + \ell_2$} \\
x + \ell_2 + \ell_3 - (\ell_1 + 2 \ell_2 + \ell_3) & \text{otherwise.}
\end{cases}
\end{displaymath}
Then $\hat{R}$ is a $2$-IET (hence a rotation), and the induced map of $\hat{R}$
on the interval $[0,\ell_1+\ell_2+\ell_3)$ is $T$. 
\end{proof}

\bold{The maps $T$ and $R$, the number $\alpha$ and the interval $J$.} 
Let $R: [0,1) \to [0,1)$ denote
rotation by $\alpha$ (i.e. $R(x) = x+\alpha$ mod $1$). Let $J =
[0,z)$ be a subset of $[0,1]$. In the rest of the paper, we assume
that the $3$-IET $T$ is the induced map of $R$ to $J$ and that $x \notin J$ implies $Rx \in J$.

\bold{The numbers $a_k$, $p_k$ and $q_k$.}
Let $a_0, a_1, \dots, $ denote the continued fraction expansion of
$\alpha$. 
Let $p_k/q_k$ denote the continued fraction convergents of
$\alpha$. Then, 
\begin{displaymath}
q_{k+1} = a_{k+1} q_k + q_{k-1}.
\end{displaymath}

\bold{Connection to tori and tori with marked points:} 
Let $\cM_1$ denote the space of flat tori of area $1$. The space
$\cM_1$ admits a transitive action by the Lie group
$SL(2,\reals)$. Let $\hat{Y} \in \cM_1$ denote the square torus. Then,
the stabilizer of $\hat{Y}$ is $SL(2,\zed)$, and thus $\cM_1$ can we
identified with $SL(2,\reals)/SL(2,\zed)$. Under this identification,
a torus with a fundamental domain the parallelogram whose vertices are
the points $0$, $v_1$, $v_2$ and $v_1 + v_2$ 
corresponds to the coset $M SL(2,\zed)$ where $M \in
SL(2,\reals)$ is the matrix whose columns are $v_1$ and $v_2$. The
$SL(2,\reals)$ action on $\cM_1$ coincides with the left
multiplication action on $SL(2,\reals)/SL(2,\zed)$. 

Let $\cM_{1,2}$ denote the space of tori with two marked points. This
space also admits an action by $SL(2,\reals)$. If $g \in SL(2,\reals)$
and $X \in \cM_{1,2}$ is
the torus with fundamental domain the parallelogram with vertices $0$,
$v_1$, $v_2$ and $v_1 + v_2$ and with the marked points $p_1$, $p_2$,
then $g X$ is the torus with fundamental domain the parallelogram with
vertices $0$, $g v_1$, $g v_2$ and $g(v_1 + v_2)$, and with the marked
points $g p_1$ and $g p_2$.  

Recall that $R:[0,1] \to [0,1]$ denotes the rotation by $\alpha$.
Let $\hat{X}=\begin{pmatrix} 1&-\alpha\\0&1 \end{pmatrix} \hat{Y} \in
\cM_1$. Observe
that the first return of the vertical flow on $\hat{X}$ to the
horizontal side coincides with $R$. If $T$ is a 3-IET
given by the induced map of $R$ to an interval
$J=[0,z)$ then $T$ is also the first return of the vertical flow on
$\hat{X}$ to a horizontal segment of length $|J|$. Let $X$ denote the
torus $\hat{X}$ with two marked points, one at each endpoint of the
horizontal segment of length $J$. 

Let $g_{t} = \begin{pmatrix} e^{t} & 0 \\ 0 & e^{-t} \end{pmatrix} \in
SL(2,\reals)$. We refer to the action of the $1$-parameter subgroup
$g_t$ as the geodesic flow on $\cM_1$ (or $\cM_{1,2}$). The action of
$g_t$ on both $\cM_1$ and $\cM_{1,2}$ is ergodic.

\bold{Renormalization.}

We will need to put a diophantine condition on the IET $T$. In terms
of $X \in \cM_{1,2}$, we want the geodesic ray $\{ g_t X \st t > 0 \}$
to spend significant time 
in compact subsets of $\cM_{1,2}$. Directly in terms of the IET data,
our conditions are the following:

ASSUMPTIONS: 
There exist constants $C_1,C_2,C_3,C_4 >1$ such that the
following holds: Suppose $\ell \in \natls$ and  $0<\eta$ are small
enough. Then there exists $\hat{c}_\eta>0$ and $c_\ell$ (depending on
$\eta$ and $\ell$ respectively) so that for every $0<c<c_\ell$ there
exists a constant $k_c \in \natls$ and infinite sequences $L_i,k_i$ so that:
\begin{enumerate}
\item[(A0)] $q_{k_i-1}<c{L_i}<q_{k_i}$.
\item[(A1)] $a_{k_i}<C_1$,
\item[(A2)] $a_{k_i+1}<C_2$ and
\item[(A3)] $a_{k_i+2}<C_3$.
\item[(A4)]  The shortest vertical trajectory on the torus from one
  marked point to a $\fp{q_k\alpha}$ neighborhood of the other has
  length at least $\frac{q_k}{C_4}$. 
\label{Separated}
\item[(A5)]  There exists $u_i$ so that either 
 $$\lambda(\psi_{L_i}^{-1}(u_i))>\hat{c}_{\eta} \text{ and } \lambda(\psi_{L_i}^{-1}((-\infty,u_i)))<\eta \hat{c}_\eta$$ or 
 $$\lambda(\psi_{L_i}^{-1}(u_i))>\hat{c}_\eta \text{ and } \lambda(\psi_{L_i}^{-1}((u_i,\infty)))<\eta \hat{c}_\eta$$  
  where $\psi_r(x) = \sum_{\ell=0}^{r -1} \chi_J(R^\ell x)$ and our 3-IET is the first return map of $R$ to $J$. 
\item[(A6)] $\underset{i \to \infty}{\lim}\, \int_0^1d(R^{L_i}x,x)d\lambda=0$. 
\item[(A7)] We have $L_i>{q_{k_{i}+\ell}}$.
\item[(A8)] We have $\max \{j:\psi_{L_i}^{-1}(j)\neq \emptyset\}- \min \{j:\psi_{L_i}^{-1}(j)\neq \emptyset\}<k_c.$  
\item[(A9)] There exists $v_i$ so that $q_{v_i}\leq L_i<q_{v_i+1}$ and either $L_i=q_{v_i}$ or $a_{v_i+1}>4$ and $L_i=pq_{v_i}$ where $p\leq \lfloor \frac{a_{v_i+1}}4\rfloor$.
\end{enumerate}

Our main result is the following:
\begin{theorem}
\label{theorem:3iet:mobius:disjoint}
Suppose $T$ is a $3$-IET satisfying the assumptions (A0)-(A9). Then,
M\"obius disjointness, i.e. (\ref{eq:Mobius:disjointness}) holds. 
\end{theorem}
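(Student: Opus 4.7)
The natural strategy is to apply the Bourgain--Sarnak--Ziegler orthogonality criterion, which reduces the statement of M\"obius disjointness to showing that for every continuous $f : X \to \reals$, every $x \in X$, and every pair of distinct primes $p \neq q$,
\begin{equation*}
\lim_{N \to \infty} \frac{1}{N} \sum_{n=1}^{N} f(T^{pn} x) \, \overline{f(T^{qn} x)} = 0.
\end{equation*}
The heart of the proof thus becomes a joint equidistribution statement for the pair of orbits $(T^{pn} x, T^{qn} x)$, which I would attack using the approximate Rokhlin tower structure implicit in the assumptions (A0)--(A9).

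The underlying picture is this. Assumptions (A5) and (A8) together say that at the renormalization scale $L_i$ the rotation return-time function $\psi_{L_i}$ takes values in a bounded window and is concentrated on a single integer $u_i$ up to a set of measure at most $\eta \hat c_\eta$. This yields an approximate Rokhlin tower decomposition of $J$ for the IET $T$, of height roughly $u_i$. Assumption (A6) then forces $R^{L_i}$ to be close to the identity in $L^1$, which, together with the tower structure, means that $T^{u_i}$ acts nearly as the identity on the bulk of the tower. Assumption (A4) ensures that the two marked points of $X$ stay well separated, so that this tower structure genuinely refines to $X$ rather than only to the underlying flat torus, while (A1)--(A3) together with (A7), (A9) supply uniform quantitative control along the sequence $L_i$ and anchor the scale $L_i$ to a small multiple of a denominator $q_{v_i}$.

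Given this structure, the main combinatorial step is to decouple $T^{pn} x$ and $T^{qn} x$. On the good set where $T^{u_i} \approx \id$, the orbit $T^{pn} x$ depends essentially only on $pn \bmod u_i$, and likewise for $T^{qn} x$; through Lemma \ref{lemma:3iet:to:rotation} this reduces the correlation sum to a Fourier-analytic question on a rotation, where Davenport's estimate $\sum_{n \le N} \mu(n) e(n\beta) = o(N)$, uniform in $\beta$, supplies the required cancellation. The fact that $p \neq q$ is what guarantees a genuine irrational rotation at the reduced level, and the bounded partial quotients (A1)--(A3) keep $\beta$ uniformly irrational across the subsequence.

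The principal obstacle is the bad subset of the tower, where $T^{u_i}$ fails to act approximately as the identity. The iterates $T^{pn}$ and $T^{qn}$ revisit this set many times as $n$ grows, and one must show that the cumulative error from these revisits remains negligible, uniformly in $i$ and in the cutoff $N$. This is precisely where the combined strength of (A4)--(A7) is needed: one must control the ergodic average of distortions along $T$-orbits, then transfer the control from single iterates to the prime-power subsequences $pn$ and $qn$ without losing the tower structure, and finally reconcile the two distinct heights $pu_i$ and $qu_i$ that these subsequences see inside the tower. Carrying out this error analysis cleanly, and ruling out resonance between the two prime-scaled heights, is the most delicate part of the argument.
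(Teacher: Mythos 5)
Your proposed route through the Bourgain--Sarnak--Ziegler (equivalently Katai) orthogonality criterion has a fundamental gap: that criterion requires the correlation $\frac{1}{N}\sum_{n\le N} f(T^{pn}x)\,\overline{f(T^{qn}x)}$ to vanish for \emph{every} pair of distinct primes $p\neq q$, but the method available under assumptions (A0)--(A9) does not yield this. If you look at what the renormalization/rigidity machinery actually delivers (Proposition~\ref{prop:satisfy:2a:2b:2c} in the paper), the disjointness of $T^m$ and $T^{m'}$ is obtained only when $|m'/m - 1|\le\epsilon_*$, i.e.\ when the two exponents are \emph{multiplicatively close}. The Ratner-type slow-drift argument (Proposition~\ref{prop:Ratner:disjointness}, Proposition~\ref{prop:fact:qfact}) hinges on choosing renormalization scales where $S^{d}$ and $S^{-\ell}$ both act by a bounded drift on the two orbit factors simultaneously, and this forces $m$ and $m'$ to be comparable. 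For a pair of primes such as $p=2$, $q=101$ this fails, so one cannot conclude disjointness of $T^{pn}$ and $T^{qn}$, and BSZ does not apply directly.

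This is precisely the reason the paper does \emph{not} prove $T^p \perp T^q$ for all prime pairs, but instead proves the weaker Theorem~\ref{theorem:disjoint:powers}: the exceptional set $B_n=\{m<n: T^m \text{ not disjoint from } T^n\}$ is lacunary (ratio bounded away from $1$). One then needs a modified orthogonality criterion tolerant of a lacunary set of bad prime pairs; this is Appendix~\ref{sec:appendix:A}, a variant of Harper's bilinear-sum argument. That deduction --- lacunary disjointness implies M\"obius disjointness --- is the key missing idea in your proposal. Your tower heuristic (concentration of $\psi_{L_i}$, approximate identity behavior of $R^{L_i}$, reduction to a rotation plus Davenport's estimate) is also a different mechanism from the one in the paper, which establishes non-disjointness constraints via an explicit Ratner-style joining rigidity argument rather than Fourier analysis; but the decisive gap is the reliance on the unmodified BSZ criterion, which the assumptions cannot support.
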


Assumptions (A0)-(A9) are reasonable in view of the following:
\begin{prop}
\label{prop:good:assump}
Let $X \in \mathcal{M}_{1,2}$. Let $\nu_T$ be the measure on $\mathcal{M}_{1,2}$ given by $\int f d\nu_T=\frac 1 T\int_0^T f(g_tX)dt$ for all $f \in \mathcal{C}_c(\mathcal{M}_{1,2})$. If there exists a weak-* limit of $\nu_T$ that is not the zero measure then the corresponding 3-IET satisfies assumptions (A0)-(A9). 
\end{prop}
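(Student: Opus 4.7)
The plan is to extract from the non-escape hypothesis a sequence of times $t_i \to \infty$ along which $g_{t_i} X$ lies in a fixed compact set $K' \subset \mathcal{M}_{1,2}$, translate this into bounded continued fraction data for $\alpha$ together with control on the relative position of the two marked points, and then choose $L_i$ to be a suitable convergent denominator of $\alpha$ so that (A0)--(A9) hold.

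By hypothesis, a subsequence $\nu_{T_n}$ converges weak-$\ast$ to a nonzero measure $\nu$. Pick a compact $K \subset \mathcal{M}_{1,2}$ with $\nu(K) > 0$ and $f \in \mathcal{C}_c(\mathcal{M}_{1,2})$ with $\chi_K \leq f \leq 1$; set $K' := \operatorname{supp}(f)$. Weak-$\ast$ convergence gives $\liminf_n \frac{1}{T_n}\int_0^{T_n} f(g_t X)\,dt \geq \nu(K) > 0$, so the Lebesgue measure of $\{t \in [0, T_n] : g_t X \in K'\}$ is at least $c \cdot T_n$ for some $c > 0$, yielding an unbounded sequence $t_i \to \infty$ with $g_{t_i} X \in K'$. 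The classical dictionary between the geodesic flow on $SL(2,\reals)/SL(2,\zed)$ and the continued fraction expansion of $\alpha$ says that compactness of $g_t \hat{X}$ at $t \approx \log q_k$ bounds $a_{k+1}$ uniformly; by enlarging $K'$ to the compact set $\bigcup_{|s| \leq S} g_s(K')$ and applying this bound at each $g_{t_i + s} X$ for $|s| \leq S$, we simultaneously bound $a_{k_i + j}$ for all $|j|$ up to any desired fixed integer, yielding (A1)--(A3). Compactness in $\mathcal{M}_{1,2}$ (as opposed to $\mathcal{M}_1$) additionally controls the vertical distance between the two marked points on $g_{t_i}\hat{X}$; rescaling by $g_{-t_i}$ translates this into the shortest-trajectory bound (A4).

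With these bounds in hand, for each small $c > 0$ and integer $\ell \geq 1$ the partial-quotient bound gives $q_{k_i + \ell} \leq C^\ell q_{k_i}$ for a uniform $C$, so provided $c < C^{-(\ell+1)}$ the interval $(q_{k_i + \ell}, q_{k_i}/c)$ contains some convergent denominator $q_{v_i}$. Setting $L_i := q_{v_i}$ immediately yields (A0), (A7), and the first case of (A9). For (A6), $d(R^{L_i} x, x) = \fp{L_i \alpha}$ is independent of $x$ and tends to $0$ as $v_i \to \infty$. For (A8), the Denjoy--Koksma inequality applied to $\chi_J$ forces the image of $\psi_{L_i}$ to be contained in at most three consecutive integers. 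For (A5), we exploit the remaining freedom in the choice of $v_i$ (and, if necessary, the $L_i = p q_{v_i}$ option of (A9) when a nearby $a_{v_i + 1}$ is large): combining the three-distance theorem with the specific structure of $\chi_J$ along the rotation orbit, we arrange the choice so that $\psi_{L_i}$ has some level $u_i$ of Lebesgue measure at least $\hat{c}_\eta$ with the corresponding one-sided tail of measure less than $\eta \hat{c}_\eta$.

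The main technical obstacle is the verification of (A5). When $\psi_{L_i}$ takes only two values, (A5) follows immediately by choosing $u_i$ to be the more massive extreme and observing that the opposite tail is empty; but in the three-value case, one must carefully choose $v_i$ (or use the $p q_{v_i}$ option) to ensure that at least one of the outer two level sets has Lebesgue measure below $\eta \hat{c}_\eta$. This is the only place in the argument where the fine combinatorial structure of the 3-IET, as opposed to the purely geometric compactness data from $\mathcal{M}_{1,2}$, is used in an essential way; the remaining conditions (A0)--(A4) and (A6)--(A9) follow essentially mechanically once the geometric dictionary is set up.
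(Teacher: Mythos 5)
Your high-level plan — non-escape gives times $t_i$ where $g_{t_i}X$ lies in a fixed compact set, compactness of the projection to $\mathcal{M}_1$ bounds partial quotients near level $k_i$, compactness in $\mathcal{M}_{1,2}$ gives the marked-point separation needed for (A4), and then one picks $L_i$ in a window determined by $e^{t_i}$ — is the same as the paper's, and the verifications of (A0), (A1)--(A4), (A6), (A7) are in the right spirit. But there are two real gaps.

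First, your plan commits to $L_i = q_{v_i}$ as the default, with $L_i = p q_{v_i}$ mentioned only as an afterthought for (A5). This is logically backwards. Conditions (A0) and (A7) pin $L_i$ to a window $(q_{k_i+\ell},\,q_{k_i}/c)$ centered at $e^{t_i}$, but if the return time $t_i$ lands where $a_{v_i+1}$ is enormous there may be \emph{no} convergent denominator in that window. The paper's Lemma~\ref{lemma:the:time} is exactly the device that fixes this: for every $t$ it produces $s\in[-2,2]$ with $e^{t+s}$ equal either to some $q_k$, or to $iq_k$ with $a_{k+1}>4$ and $i\le\lfloor a_{k+1}/4\rfloor$; the $L_i = p q_{v_i}$ option of (A9) is not optional but an essential part of the time selection, and the choice must be made \emph{once} and then all of (A0)--(A9) verified for it. Your proposal never shows the window always contains an allowable $L_i$, and changing $L_i$ after the fact to fix (A5) would invalidate the already-checked conditions.

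Second, and more seriously, you do not actually prove (A5). For $L_i = q_{v_i}$, Denjoy--Koksma gives that $\psi_{L_i}$ takes at most $5$ (not $3$) values, and in the $L_i = p q_{v_i}$ case as many as $p+2$ — so your remark about (A8) is also off, and (A8) genuinely requires the $c$-dependent bound $k_c$. More to the point, showing that some level $u_i$ has mass $\ge\hat{c}_\eta$ with a one-sided tail $<\eta\hat{c}_\eta$ is nontrivial precisely in the $L_i = p q_{v_i}$ case. The paper proves this in Lemma~\ref{lemma:happy:times} by passing to $g_{\log q_k}\hat{X}$, closing the length-$p$ vertical segment and the horizontal slit into closed curves, and counting intersections — the key geometric observation being that on a measure-$\ge 1/4$ set of starting points all the translated curves interact with $\zeta_2$ and $\zeta_2'$ identically, giving a mass-$\ge 1/12$ level within one of an extreme. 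Your appeal to ``the three-distance theorem combined with the specific structure of $\chi_J$'' names a plausible ingredient but does not constitute an argument, and this is the genuinely hard step: without Lemma~\ref{lemma:happy:times} (or an equivalent), Corollary~\ref{cor:happy times} and hence (A5) remain unproven.

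A smaller point: the density estimates in Lemmas~\ref{lem:cont in cpct}, \ref{lem:vit}, \ref{lem:in cpct} (the Vitali-covering arguments) are not just bookkeeping. One needs more than a sequence of good times $t_i$ — one needs that for a positive-density set of good $t$, the geodesic also revisits the compact set within a window $[t,t+r]$ (this is what gives (A7) with a definite $\ell$ depending on $r$), and that the continued-fraction data stays bounded throughout a uniform time window around $t$. Simply enlarging $K'$ to $\bigcup_{|s|\le S}g_s(K')$ does not by itself give the recurrence-within-a-window statement that underpins (A7) and (A8).
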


Proposition~\ref{prop:good:assump} is proved in \S\ref{sec:renorm}.

From the Birkhoff ergodic theorem and Proposition~\ref{prop:good:assump}
it is clear that almost all $3$-IET's satisfy the assumptions (A0)-(A9). 
Thus, an immediate corollary of 
Theorem~\ref{theorem:3iet:mobius:disjoint} is the following:

\begin{corollary}
\label{cor:almost:all:mobius}
For almost all 3-IET's, M\"obius disjointness
(i.e. (\ref{eq:Mobius:disjointness}) holds. 
\end{corollary}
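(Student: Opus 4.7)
The plan is to apply ergodic-theoretic equidistribution (Birkhoff or Dani--Smillie) for the geodesic flow $g_t$ on $\mathcal{M}_{1,2}$ and feed the result into Proposition~\ref{prop:good:assump} and Theorem~\ref{theorem:3iet:mobius:disjoint}. As a preliminary reduction, every permutation on three letters other than $\begin{pmatrix}1&2&3\\3&2&1\end{pmatrix}$ is either reducible or gives a rotation, for which M\"obius disjointness is the classical estimate of Davenport. It therefore suffices to treat the permutation of Lemma~\ref{lemma:3iet:to:rotation}. For that permutation, the discussion preceding the assumptions associates to each 3-IET a point $X \in \mathcal{M}_{1,2}$: after normalizing $\ell_1 + 2\ell_2 + \ell_3 = 1$, the torus $\hat{X}$ has rotation angle $\alpha = \ell_2 + \ell_3$ and the two marked points are the endpoints of a horizontal segment of length $\ell_1 + \ell_2 + \ell_3$. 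This parametrization is linear in the lengths, so Lebesgue measure on 3-IET length vectors pushes forward to a measure in the Lebesgue class on the image submanifold $\Sigma \subset \mathcal{M}_{1,2}$ (a piece of the unstable horocycle orbit of the square torus $\hat{Y}$, with an extra marked-point coordinate).

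For Lebesgue-a.e.\ point of $\Sigma$, the $g_t$-trajectory equidistributes in $\mathcal{M}_{1,2}$. In the torus factor this is the Dani--Smillie equidistribution theorem applied to the unstable horocycle (using the identity $g_t u_{-\alpha} = u_{-\alpha e^{2t}} g_t$ together with Howe--Moore), and a parallel statement handles the marked-point fiber. Thus for a.e.\ 3-IET parameter $(\alpha, |J|)$ and every $f \in \mathcal{C}_c(\mathcal{M}_{1,2})$,
\[
\nu_T(f) = \frac{1}{T}\int_0^T f(g_t X)\, dt \;\longrightarrow\; \int f\, d\mu,
\]
where $\mu$ is the $SL(2,\reals)$-invariant probability measure on $\mathcal{M}_{1,2}$. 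Choosing $f \ge 0$ nonzero makes the limit strictly positive, so every weak-* accumulation point of $\{\nu_T\}$ equals the nonzero measure $\mu$. Proposition~\ref{prop:good:assump} then certifies (A0)--(A9) for the corresponding 3-IET, and Theorem~\ref{theorem:3iet:mobius:disjoint} yields the M\"obius disjointness (\ref{eq:Mobius:disjointness}).

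Pulling the resulting full-measure set of good parameters back through the linear parametrization produces a Lebesgue-conull set of length vectors, as required. The only ingredient not contained in the excerpt is this equidistribution of $\{g_t X\}_{t>0}$ for a.e.\ $X \in \Sigma$; I expect this to be the principal obstacle, though in the genus-one setting it reduces to standard statements in homogeneous dynamics (Dani--Smillie / Howe--Moore).
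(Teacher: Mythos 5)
Your argument is the same as the paper's, which simply invokes the Birkhoff ergodic theorem for $g_t$ on $\mathcal{M}_{1,2}$ together with Proposition~\ref{prop:good:assump} and Theorem~\ref{theorem:3iet:mobius:disjoint}; you have merely unpacked the one-line justification, added the (correct, though unstated in the paper) reduction to the permutation $\begin{pmatrix}1&2&3\\3&2&1\end{pmatrix}$, and made explicit the step of transferring genericity from the smooth invariant measure to Lebesgue-a.e.\ point of the measure-zero unstable slab $\Sigma$. That last transfer step, which you flag as the main missing ingredient, is indeed what the paper's phrase ``it is clear'' is silently absorbing; your instinct that it follows from standard homogeneous-dynamics facts (Birkhoff plus the local product structure / absolute continuity of the unstable foliation, or alternatively horocycle equidistribution) is right, and only non-divergence of $\nu_T$, not full equidistribution, is actually needed to feed into Proposition~\ref{prop:good:assump}.
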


\medskip

\bold{Disjointness.}
As in e.g. \cite{Borgain:Sarnak:Ziegler} we derive the M\"obius
disjointness result from a result about joinings of powers of $T$. In
fact, we prove the following:

\begin{theorem}
\label{theorem:disjoint:powers}
If $T$ is a 3-IET that satisfies assumptions (A0)-(A9)
then there exists $\kappa>1$ so that for all
$n>0$, $B_n=\{m<n:T^m \text{ is not disjoint from }T^n\}$  has the
property that $m_1<m_2 \in B_n$ then $\frac{m_2}{m_1}>\kappa$.
\end{theorem}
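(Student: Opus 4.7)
The plan is to proceed by contradiction using joining analysis. Suppose $T^m$ and $T^n$ are not disjoint for some $m<n$, so there exists an ergodic probability measure $\rho$ on $X\times X$, invariant under $T^n\times T^m$, with Lebesgue marginals and distinct from $\lambda\times\lambda$. The strategy is to use the renormalization data (A0)--(A9) to force $\rho$ to carry an extra translation invariance that constrains $n/m$ to lie in an explicit discrete set, then to deduce the multiplicative gap from a Farey-type estimate.

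The main step is to extract additional invariance of $\rho$ from the rigidity sequence $L_i$. By (A5)--(A8), the Birkhoff sum $\psi_{L_i}$ concentrates at least $\hat c_\eta$ mass on a single value $u_i$ with only $\eta\hat c_\eta$ error mass on one side, while its range is bounded by $k_c$; by (A6) the displacement of $R^{L_i}$ tends to zero in measure; and by (A9), $L_i=p\,q_{v_i}$ with $p\le a_{v_i+1}/4$. Together these show that, up to a set of measure $\eta\hat c_\eta$, the map $T^{L_i}$ acts on $J$ as approximate translation by $p\,\fp{q_{v_i}\alpha}$. Now $\rho$ is $(T^n\times T^m)^{L_i}$-invariant by hypothesis, so passing to a subsequential weak-$*$ limit of the affine push-forwards yields a Lebesgue-marginal measure $\rho_\infty$ invariant under a limiting rotation pair $(n s_\infty, m s_\infty)$, where $s_\infty$ is a subsequential limit of $p\,\fp{q_{v_i}\alpha}$ kept bounded away from $0$ and $\infty$ by (A0)--(A3). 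Assumption (A4) ensures the underlying geodesic orbit of $X$ stays in a compact part of $\cM_{1,2}$ so this extraction is legitimate, and (A7) guarantees the rigidity level is deep enough for the approximation to be quantitatively useful.

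Next, I would classify $\rho_\infty$. A probability measure on $X\times X$ with Lebesgue marginals that is invariant under translation by an irrational vector on the underlying $2$-torus is forced to equal $\lambda\times\lambda$ by a standard Kronecker-type argument, so non-triviality of $\rho$ (hence of $\rho_\infty$) forces a rational relation: there exist integers $a,b$, not both zero, with $a\,n+b\,m\equiv 0$ modulo the period of the relevant cycle. The diophantine bounds $C_1,\dots,C_4$ from (A1)--(A4) limit the numerator and denominator of this relation: one gets $|a|,|b|\le M$ for some $M=M(C_1,C_2,C_3,C_4)$. Consequently, every $m\in B_n$ satisfies $m/n\in\cF_M$, the Farey set of reduced fractions of height at most $M$.

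Finally, any two distinct reduced fractions $a_1/b_1,a_2/b_2\in\cF_M$ differ by at least $1/(b_1b_2)\ge 1/M^2$. Hence if $m_1<m_2$ both lie in $B_n$, then $(m_2-m_1)/n\ge 1/M^2$, so $m_2-m_1\ge n/M^2\ge m_2/M^2$, giving $m_2/m_1\ge M^2/(M^2-1)=:\kappa>1$, as desired. The main obstacle is the joining-classification step: proving rigorously that the weak-$*$ limit $\rho_\infty$ is genuinely invariant under the claimed rotation, in spite of the $\eta\hat c_\eta$ exceptional mass permitted by (A5) and the finite bounded range of $\psi_{L_i}$ from (A8). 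This requires a quantitative ergodic argument combining the tightness from (A0)--(A4) with the approximate-rotation structure of $T^{L_i}$, in the spirit of Ratner-type arguments for rigid systems, and adapted here through the rotation-IET correspondence of Lemma~\ref{lemma:3iet:to:rotation}.
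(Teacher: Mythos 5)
Your proposal diverges fundamentally from the paper's argument, and several of its key steps do not work.

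First, the ``approximate translation'' claim is miscalibrated. Along a rigidity sequence $L_i = p\, q_{v_i}$, the displacement of $R^{L_i}$ is $p\fp{q_{v_i}\alpha}$, which by Lemma~\ref{lemma:good:bound} has magnitude at most $p/(a_{v_i+1}q_{v_i}) \le \tfrac14 q_{v_i}^{-1} \to 0$; it is not ``bounded away from $0$ and $\infty$'' by (A0)--(A3). Consequently the weak-$*$ limit $\rho_\infty$ of the push-forwards of $\rho$ by these near-identity maps is just $\rho$ itself and carries no new invariance; the extraction step as written is vacuous. Second, the proposed Kronecker classification cannot run as stated: the putative limiting rotation vector is a scalar multiple $(n\,s_\infty,\, m\,s_\infty)$, which always satisfies the rational relation $m\cdot(n s_\infty) - n\cdot(m s_\infty)=0$, so $R_{n s_\infty}\times R_{m s_\infty}$ is never uniquely ergodic on the product and the dichotomy ``product vs. rational relation'' carries no information about $m,n$. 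Third, the Farey step would prove far too much. If $n$ is prime and $1\le m<n$, then $m/n$ is already reduced of denominator $n$, so the assertion $m/n\in\cF_M$ forces $B_n=\emptyset$ for all primes $n>M$ --- but Theorem~\ref{theorem:disjoint:powers} only claims a multiplicative gap, and $B_n$ can genuinely have $\asymp\log n$ elements, consistent with the stated $\kappa>1$.

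The paper's actual mechanism is different in two essential respects. It does not extract invariance from rigidity of $T^{L_i}$ alone; instead, Proposition~\ref{prop:fact:qfact} is a Ratner-type shearing criterion in which the extra invariance $S^{d}\times S^{\pm\ell}$ is produced by following two nearby fibre points $(x,y)$ and $(x',F_iy)$ for time $r_i$ and watching the small initial separation grow polynomially to a bounded, nontrivial shift --- the key quantitative input being the cocycle estimate of Lemma~\ref{lemma:first:sum}, which controls
\[
\sum_{\ell=0}^{mw_k-1}\bigl(\chi_J(R^\ell x)-\chi_J(R^{\ell+q_k}x)\bigr)-\sum_{\ell=0}^{m'w_k-1}\bigl(\chi_J(R^{\ell}x)-\chi_J(R^{\ell+q_k}x)\bigr)\in\{-1,1\}
\]
on a set of definite measure. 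Moreover, the argument is inherently a comparison of \emph{three} powers $n, m, m'$: Proposition~\ref{prop:satisfy:2a:2b:2c} shows that whenever $m'<m<n$ with $|m'/m-1|\le\epsilon_*$, the criterion applies to $T_1=S^n$ and at least one of $T_2\in\{S^m,S^{m'}\}$, and then Corollary~\ref{cor:trivial:joining} (using weak mixing of $S$) forces the corresponding joining to be $\lambda\times\lambda$. The conclusion is exactly a trichotomy: two multiplicatively close exponents $m,m'$ cannot both lie in $B_n$, which is what yields $\kappa=1/(1-\epsilon_*)>1$. Your proposal does not contain this three-way comparison, nor the shearing estimate; without them, the theorem does not follow.
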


See Appendix~\ref{sec:appendix:A} for a proof that
Theorem~\ref{theorem:disjoint:powers} implies
Theorem~\ref{theorem:3iet:mobius:disjoint}. This is a straightforward
modification of a note of Harper \cite{Harper:note}.  It is included for completeness.

\begin{remark} In Appendix~\ref{sec:appendix:B}, we prove that for
  almost every 3-IET, $T$,  $T^n$ is disjoint from $T^m$ for all
  $0<n<m$. This gives an alternative (and much easier) proof of
  Corollary~\ref{cor:almost:all:mobius}. However, the  proof in
  Appendix~\ref{sec:appendix:B} does not give a useful diophantine
  condition under which M\"obius disjointness holds. 
\end{remark}

\bold{Related work:} M\"{o}bius disjointness has been shown for a variety of systems see for example \cite{ELR}, \cite{GT} and \cite{W} among others. Most closely related to this work is  \cite{D}, where Vinogradov's circle method is used to prove that every rotation (2-IET) is disjoint from M\"{o}bius;  \cite{Bou} which shows a set of 3-IETs satisfying a certain measure 0 condition are disjoint from M\"{o}bius and \cite{Borgain:Sarnak:Ziegler} where a slightly stronger version of our criterion is introduced to show that the time 1 map of horocycle flows are disjoint from M\"{o}bius. This last paper motivated our approach.

\bold{Further questions and conjectures.}
\begin{question} What is the Hausdorff codimension of the set of $X\in \mathcal{M}_{1,2}$ so that any weak-* limit point of $\nu_T$ is the zero measure?
\end{question}

\begin{conj} For almost every IET that is not of rotation type
  and $n<m\in \mathbb{N}$ we have $T^n$ is disjoint from $T^m$. In
  fact if $U_T$ is the unitary operator associated to (composition
  with) $T$ on $L^2$ function of inttegral zero then there is a
  sequence $k_1,...$ so that $U_{T^{nk_i}}$ converges to the 0
  operator in the weak operator topology and $U_{T^{mk_i}}$ converges
  to the identity operator in the strong operator topology. 
\end{conj}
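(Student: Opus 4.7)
The plan is to extend the strategy of Theorem~\ref{theorem:disjoint:powers} from $3$-IETs to general IETs by replacing continued-fraction renormalization with Rauzy--Veech induction on the corresponding stratum of abelian differentials, and by invoking the Kontsevich--Zorich cocycle in place of the elementary Birkhoff sum estimates that suffice for rotations. A first remark is that the operator-theoretic strong form implies the joining-theoretic form: if along some $k_i$ one has $U_{T^{n k_i}}\to 0$ in the weak operator topology and $U_{T^{m k_i}}\to I$ in the strong operator topology, then, writing any $(T^n,T^m)$-joining $\mu$ through its disintegration and using the bounded projection onto the second factor, one obtains $\int f_1(x) f_2(y)\,d\mu=\lim_i \int f_1(T^{nk_i}x)f_2(T^{mk_i}y)\,d\mu$, and the strong rigidity can be used to replace $T^{m k_i}y$ by $y$ while the weak decay drives $\Pi(U_T^{nk_i}f_1)\to 0$. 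So everything reduces to constructing the sequence $k_i$.

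To produce $k_i$, I would exploit that for almost every IET $\pi$ not of rotation type, the Teichm\"uller geodesic $\{g_t\pi\}$ is generic for Masur--Veech measure on its stratum, hence recurs infinitely often to a fixed compact set. At a recurrence time $t_i$, the first-return map of $T$ to a canonical sub-interval $J_i$ is a rescaled IET in the same Rauzy class as $T$; call the return time $r_i$. One then sets $k_i=\lfloor r_i/m\rfloor$, so that $m k_i$ lands essentially on a renormalization time. The displacement $x\mapsto T^{m k_i}x - x$ is controlled by the Kontsevich--Zorich cocycle over $t_i$ units of Teichm\"uller time, and by Avila--Viana simplicity it shrinks exponentially except in the top KZ direction, which can be killed off by passing to a further renormalization step. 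The intermediate iterates $T^{n k_i}$ for $1\le n<m$ instead displace mass by a nontrivial fraction of a dominant cylinder, and one hopes the correlations $\langle f,U_{T^{n k_i}}f\rangle$ decay for all mean-zero $f$ by a cylinder-shearing argument analogous to assumption (A5) in the present paper.

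The main obstacle is the genuine asymmetry between the desired \emph{strong}-operator rigidity at $m k_i$ and the \emph{weak}-operator decay at \emph{every} smaller multiple $n k_i$, all along a \emph{single} sequence $k_i$ (allowed to depend on $n,m$). Strong rigidity requires uniform-in-$x$ smallness of $T^{m k_i}x - x$ in $L^2$, not merely smallness on a set of large measure, whereas weak decay at each $n<m$ forbids any partial resynchronization of the cocycle at proper divisors of $m$. Reconciling these two demands requires fine, quantitative control of the full distribution of the Kontsevich--Zorich displacement cocycle at rigidity times -- a strengthening of Forni--Bufetov deviation-of-ergodic-averages that is not currently available in the needed generality. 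For this reason I would first attack the purely joining-theoretic statement (disjointness of $T^n$ and $T^m$) using Ryzhikov--Thouvenot-style self-joining arguments coupled with the Rauzy--Veech renormalization and assumptions in the spirit of (A0)--(A9) adapted to higher genus, and only then return to the operator-theoretic strengthening.
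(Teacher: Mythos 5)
This statement is presented in the paper as a \emph{conjecture}, not a theorem; the paper offers no proof of it, so there is no argument of the authors' to compare yours against. (The paper's Appendix~B proves only the joining-theoretic statement for almost every $3$-IET, and the paragraph just below Definition~\ref{def:IET} explicitly says that extending beyond $d=3$ will require ``fundamental new ideas.'') What you have written is, by your own account, a research program rather than a proof, and your self-assessment of where the gaps lie is essentially accurate.

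A few substantive comments. Your opening reduction is correct and standard: if $U_{T^{nk_i}}\to 0$ in the weak operator topology on $L^2_0$ and $U_{T^{mk_i}}\to I$ in the strong operator topology on $L^2_0$, then for any joining $\mu$ of $T^n$ and $T^m$ and mean-zero $f_1,f_2$ one has
$\int f_1(x)f_2(y)\,d\mu=\int f_1(T^{nk_i}x)f_2(T^{mk_i}y)\,d\mu$,
one replaces $f_2\circ T^{mk_i}$ by $f_2$ at cost $\|f_2\circ T^{mk_i}-f_2\|_{L^2(\lambda)}\to 0$ (using that the second marginal of $\mu$ is $\lambda$), and then
$\int f_1(T^{nk_i}x)f_2(y)\,d\mu=\langle U_{T^{nk_i}}f_1,\;g\rangle_{L^2(\lambda)}$
with $g(x)=E_\mu[f_2(y)\mid x]\in L^2_0$, which tends to zero. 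So the operator form does imply disjointness. Note also a minor misreading: the conjecture asks, for each fixed pair $n<m$, for \emph{some} sequence $k_i$ (allowed to depend on $n$ and $m$); it does not demand weak decay simultaneously at every smaller multiple $n'k_i$ for $n'<m$. The difficulty you describe in the third paragraph is therefore partly self-imposed, although your particular choice $k_i=\lfloor r_i/m\rfloor$ does inevitably place $nk_i$ at other rational fractions of the renormalization time, so the concern does not entirely disappear.

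The genuine gap is the one you identify: SOT rigidity at $mk_i$ amounts to $\lambda(\{x:d(T^{mk_i}x,x)>\epsilon\})\to 0$ for every $\epsilon$, while what the Kontsevich--Zorich cocycle plus Avila--Viana simplicity readily gives is small displacement on a large-measure set \emph{away from} the top Lyapunov direction and \emph{up to} the marked-point data. Controlling the full distribution of the displacement at renormalization times in higher genus, and simultaneously producing WOT decay at $nk_i$ by something like the ``cylinder shearing'' that (A5) encodes in the torus case, would indeed need quantitative deviation estimates stronger than what the Forni--Bufetov theory currently provides. The $3$-IET proofs in this paper lean very heavily on the $\zed^2$ homology of the torus (continued fractions, Denjoy--Koksma, the explicit return-time structure of Lemma~\ref{lemma:ret:time}); there is no higher-genus substitute of comparable precision at present, which is precisely why the paper leaves this as a conjecture. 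In short: your plan is a sensible sketch of how one might attack the problem, but as written it is not a proof and does not purport to be one.
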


\bold{Outline:} The Section 2 establishes an abstract disjointness criterion, Proposition \ref{prop:fact:qfact}. Sections 4 uses this to prove Theorem \ref{theorem:disjoint:powers}. Section 3 recalls standard facts about rotations used in Section 4. Section 5 proves Proposition \ref{prop:good:assump}. Appendix A proves that Theorem \ref{theorem:disjoint:powers} implies Theorem~\ref{theorem:3iet:mobius:disjoint}. Appendix B proves that almost every 3-IET has the property that all of its distinct positive powers are disjoint.

  \bold{Acknowledgments:}
J. C. was supported in part by NSF grants DMS-135500 and DMS-1452762 and the Sloan foundation. A. E. is supported in part by NSF grant DMS 1201422 and the Simons Foundation. The authors thank Adam Harper for graciously letting us modify his note and a helpful correspondence. A. E. thanks Princeton University and the Institute for Advanced Study for support during part of this work. The authors would like to thank the Isaac Newton Institute for Mathematical Sciences, Cambridge, for support and hospitality during the program Dynamics of Group Actions and Number Theory  where work on this paper was undertaken. This work was supported by EPSRC grant no EP/K032208/1. 

\section{Disjointness criterion}
\label{sec:criterion}

Let $(X,d)$ be a metric space. We set $X_1 = X_2 = X$, and
write the product $X \cross X$ as
$X_1 \cross X_2$. Let $\lambda$ be a measure on $X$, and 
let $T_1: X_1 \cross X_1$ and $T_2: X_2 \to X_2$ be
$\lambda$-preserving maps. Let $\sigma$ be a joining of $(X_1,T_1,
\lambda)$ and $(X_2, T_2,\lambda)$, i.e.\ $\sigma$ is an ergodic $T_1 \cross
T_2$-invariant measure on $X_1 \cross X_2$ which projects to $\lambda$
in either factor.  

Our basic strategy is due to Ratner \cite{Ratner:Joinings}. In
fact, we use the following proposition:

\begin{proposition}
\label{prop:Ratner:disjointness}
Suppose $S: X \to X$ is a $\lambda$-preserving map which commutes with
$T_1$ and $T_2$. Suppose $d_1 \ge 0$, $d_2 > 0$, and for every 
$\delta > 0$ and any compact set $K \subset X_1 \cross
X_2$ with $\sigma(K) > 1-\delta$ and for every
$\delta > \epsilon > 0$ there exist points $(x,y) \in X_1 \cross X_2$,
$(x', y') \in X_1 \cross X_2$ and $r \in \natls$ so that
the following conditions hold:
\begin{itemize}
\item[{\rm (a)}] $(T_1\cross T_2)^r(x',y') \in K$.  
\item[{\rm (b)}] $(T_1\cross T_2)^r(x,y) \in K$.  
\item[{\rm (c)}] 
$d(T_1^{r} x', S^{d_1} T_1^{r} x) + 
d(T_2^{r} y', S^{-d_2} T_2^{r} y) < \epsilon$.   
\end{itemize}
Then, $\sigma$ is $S^{d_1} \cross S^{-d_2}$-invariant. 
\end{proposition}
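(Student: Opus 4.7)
The plan is to argue by contradiction, combining the Birkhoff pointwise ergodic theorem applied to $\sigma$ with the shearing approximation (a)--(c) to force $\sigma$ to agree with its $\Phi$-push-forward, where $\Phi := S^{d_1} \times S^{-d_2}$. First, note that $\Phi$ commutes with $T := T_1 \times T_2$ (since $S$ commutes with each factor) and preserves $\lambda$ componentwise, so $\bar\sigma := \Phi_*\sigma$ is again an ergodic $T$-invariant joining of $(X_1,\lambda)$ and $(X_2,\lambda)$. It therefore suffices to show $\sigma = \bar\sigma$. If not, a standard approximation argument produces a bounded uniformly continuous $f:X_1\times X_2 \to \reals$ with $|f|\le 1$ and $\int f\,d\sigma - \int f\,d\bar\sigma = \eta$ for some $\eta>0$; the goal is to contradict this.

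Apply Birkhoff's theorem to $\sigma$ with the observables $f$ and $f\circ\Phi$, noting $\int f\circ\Phi\,d\sigma = \int f\,d\bar\sigma$ and $f\circ\Phi\circ T^n = f\circ T^n\circ\Phi$. For $\sigma$-a.e.\ $z$,
\[
 A_N(z) := \frac{1}{N}\sum_{n=0}^{N-1} f(T^n z) \longrightarrow \int f\,d\sigma,\qquad
 B_N(z) := \frac{1}{N}\sum_{n=0}^{N-1} f(\Phi T^n z) \longrightarrow \int f\,d\bar\sigma.
\]
By Egorov's theorem, choose a compact $K\subset X_1\times X_2$ with $\sigma(K)>1-\delta$ (for $\delta>0$ chosen small) and an $N_0\in\natls$ such that, for every $z\in K$ and every $N\ge N_0$, both $|A_N(z)-\int f\,d\sigma|$ and $|B_N(z)-\int f\,d\bar\sigma|$ are at most $\eta/4$. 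Next, use uniform continuity of $f$ to pick $\epsilon\in(0,\delta)$ so small that $d(w,w')<\epsilon$ forces $|f(w)-f(w')|<\eta/8$.

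Apply the hypothesis to this $(K,\delta,\epsilon)$ and obtain $(x,y),(x',y')\in X_1\times X_2$ and $r\in\natls$ satisfying (a)--(c). Set $u:=T^r(x,y)$ and $u':=T^r(x',y')$; by (a) and (b) both $u,u'\in K$, and by (c) we have $d(u',\Phi u)<\epsilon$. Pick any $N\ge N_0$; the Egorov bounds give $|A_N(u')-\int f\,d\sigma|<\eta/4$ and $|B_N(u)-\int f\,d\bar\sigma|<\eta/4$. Hence, by the triangle inequality, contradicting $\int f\,d\sigma - \int f\,d\bar\sigma = \eta$ reduces to the estimate $|A_N(u')-B_N(u)|<\eta/2$.

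The main obstacle is to propagate the closeness $d(u',\Phi u)<\epsilon$ forward: one needs $d(T^n u',\Phi T^n u)<\epsilon$ for all but an $(\eta/8)$-fraction of $n\in\{0,\dots,N-1\}$. Granted this, uniform continuity yields $|f(T^n u')-f(\Phi T^n u)|<\eta/8$ for the good $n$, while $|f|\le 1$ bounds each bad contribution by $2$, so
\[ |A_N(u')-B_N(u)| \le (1-\eta/8)(\eta/8) + (\eta/8)\cdot 2 < \eta/2, \]
completing the contradiction. In the 3-IET setting of the paper, $T_1$ and $T_2$ are induced rotations, hence piecewise isometries, and the diophantine condition (A4) controls the frequency of iterates at which an orbit passes within $\epsilon$ of a discontinuity; this delivers the required propagation. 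In an abstract metric setting the proposition would need an additional near-isometry hypothesis at this step.
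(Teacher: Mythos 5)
Your reduction to showing $\sigma=\Phi_*\sigma$ for $\Phi=S^{d_1}\times S^{-d_2}$ is fine, but the argument then hinges on a step you cannot carry out: propagating the single-time closeness $d(u',\Phi u)<\epsilon$ at time $r$ to $d(T^n u',\Phi T^n u)<\epsilon$ for most $n\in\{0,\dots,N-1\}$. Nothing in the hypotheses of the proposition supports this: $T_1,T_2$ are only assumed $\lambda$-preserving (not continuous, let alone isometric), the points $u,u'$ produced by (a)--(c) are specific points for which no almost-everywhere statement applies, and a single-time approximation gives no control whatsoever over Birkhoff averages along the subsequent orbit. Your appeal to (A4) does not repair this: Proposition~\ref{prop:Ratner:disjointness} is an abstract criterion, invoked later for IETs, and in any case (A4) is a statement about vertical trajectories between marked points, not about how often an orbit of $T_1\times T_2$ comes within $\epsilon$ of a discontinuity. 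Your closing sentence concedes the point --- but the proposition as stated is true without any ``near-isometry'' hypothesis, so the proof as proposed does not establish it.

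The idea you are missing is to exploit the fact that the hypothesis is quantified over \emph{every} compact set $K$ with $\sigma(K)>1-\delta$, so $K$ may be chosen adversarially rather than via Egorov. If $\sigma$ is not $\Phi$-invariant, then $\Phi_*\sigma$ and $\sigma$ are two \emph{distinct ergodic} $T_1\times T_2$-invariant measures, hence mutually singular; one can therefore choose a compact $K$ with $\sigma(K)>1-\delta$ such that $\Phi K\cap K=\emptyset$, and in fact $d(\Phi K,K)>\epsilon$ for some $\epsilon>0$. Feeding this particular $K$ and $\epsilon$ into the hypothesis produces $u=(T_1\times T_2)^r(x,y)\in K$ and $u'=(T_1\times T_2)^r(x',y')\in K$ with $d(u',\Phi u)<\epsilon$ by (c), which contradicts $d(\Phi K,K)>\epsilon$ immediately --- no Birkhoff averaging, no Egorov set, and no orbit propagation are needed. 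Mutual singularity of distinct ergodic invariant measures is the ingredient your argument never uses, and it is exactly what lets the single-time estimate at time $r$ finish the proof.
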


\begin{proof}
Suppose $\sigma$ is not $S^{d_1} \cross S^{-d_2}$-invariant. Then 
$(S^{d_1} \cross S^{-d_2})\sigma$ is an ergodic $T_1 \cross
T_2$-invariant measure which is distinct from $\sigma$. Thus, 
$(S^{d_1} \cross S^{-d_2})\sigma$ and $\sigma$ are mutually singular. 
It follows that for any $\delta > 0$ there exists a compact set $K$
with $\sigma(K) > 1-\delta$ such that $(S^{d_1} \cross S^{-d_2}) K
\cap K = \emptyset$. Then there exists $\epsilon$ such that
\begin{displaymath}
d((S^{d_1} \cross S^{-d_2}) K, K ) > \epsilon. 
\end{displaymath}
This is not consistent with conditions (a)-(c). 
\end{proof}

\begin{prop}
\label{prop:fact:qfact} 
Suppose $S$ is continuous except for finitely many points, and suppose
$\lambda$ gives zero measure to the points of discontinuity of $S$. 
Assume 
\begin{enumerate}
\item There exists a sequence of measurable partitions of $X_1$, $U^{(i)}_{-k},...,U^{(i)}_k$ and a sequence of numbers $r_i$ so that 
$$\underset{i \to \infty}{\lim}\,
\int_{U_j^{(i)}}d(T_1^{r_i}(x),S^jx)\, d\lambda(x)=0.$$
\item There exists $\ell \in \{1,2,3\}$, a sequence of measurable sets  $A_i$, and functions $F_i$ preserving the measure $\lambda$, so that
\begin{enumerate}
\item $\underset{i \to \infty}{\lim}\int_{A_i}d(F_iy,y)\,d\lambda(y)=0$.
\item $\underset{i \to
    \infty}{\lim}\int_{A_i}d(S^{-\ell}(T_2^{r_i}y),T_2^{r_i}F_iy)\, d\lambda(y)=0$.
     \\
    This implies
\item $\underset{i \to \infty}{\lim}\int_{F_i(A_i)}d(F_i^{-1}y,y)\,d\lambda(y)=0$.
\item $\underset{i \to
    \infty}{\lim}\int_{F_i(A_i)}d(S^{\ell}(T_2^{r_i}y),T_2^{r_i}F_i^{-1}y)\, d\lambda(y)=0$.
    \mcc{I think the powers may be reversed. Maybe not}
\end{enumerate}
\item There exists an absolute constant $\delta_0 > 0$ such that the
  following holds: for any $0 < \delta < \delta_0$, either 
there exists $a \in \zed$ 
so that for infinitely many $i$,
\begin{equation}
\label{eq:a:is:good}
\sigma(\{(x,y):x\in U_{a}^{(i)} \text{ and } y \in A_i\})> 27 \delta +
14 \lambda( \bigcup_{\ell < a} U_\ell^{(i)})
\end{equation}
or there exists $a' \in \zed$ so that
for infinitely many $i$,
\begin{equation}
\label{eq:a:prime:is:good}
\sigma(\{(x,y):x\in U_{a'}^{(i)} \text{ and } y \in F_i(A_i)\})> 27
\delta + 14 \lambda(\bigcup_{\ell>a'}U_\ell^{(i)}).
\end{equation}
\end{enumerate}



Under the assumptions (1)-(3) with 
(\ref{eq:a:is:good}) there exists
 $d\geq 0$ so that $\sigma$ is $S^d\times S^{-\ell}$ invariant. Also,
 under the assumptions (1)-(3) with 
 (\ref{eq:a:prime:is:good}) there exists
$d\leq 0$ so that $\sigma$ is $S^d\times S^{\ell}$ invariant.
\end{prop}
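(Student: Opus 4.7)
The plan is to apply Proposition~\ref{prop:Ratner:disjointness} with $d_2=\ell$, $r=r_i$ along the subsequence produced by (\ref{eq:a:is:good}) at the fixed $a\in\zed$, and with $y':=F_iy$ for $y\in A_i$. Hypothesis (2)(b) then handles the second term of condition~(c) directly; the whole problem reduces to selecting a shift $d\geq 0$ and producing, for any $\delta,K,\epsilon$, one triple $(x,y,x')$ with $x\in U_a^{(i)}$, $x'\in U_{a+d}^{(i)}$, both $(x,y)$ and $(x',F_iy)$ in $K_1:=K\cap(T_1\times T_2)^{-r_i}K$, and $d(T_1^{r_i}x',S^dT_1^{r_i}x)<\epsilon/2$.

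The choice of $d$ comes from a pigeonhole argument on the auxiliary measure
\begin{displaymath}
\Sigma\;:=\;\int_{A_i}\sigma_y\otimes\sigma_{F_iy}\,d\lambda(y)\quad\text{on }X_1\times X_1,
\end{displaymath}
where $\sigma_y$ is the disintegration of $\sigma$ over its second factor. Because $F_i$ preserves $\lambda$, the two marginals of $\Sigma$ coincide with the $X_1$-projections of $\sigma|_{X_1\times A_i}$ and $\sigma|_{X_1\times F_i(A_i)}$ respectively, and each is dominated by $\lambda$. In particular $\Sigma(X_1\times\bigcup_{j<a}U_j^{(i)})\leq\lambda(\bigcup_{j<a}U_j^{(i)})$, while (\ref{eq:a:is:good}) gives $\Sigma(U_a^{(i)}\times X_1)=\sigma(U_a^{(i)}\times A_i)>27\delta+14\lambda(\bigcup_{j<a}U_j^{(i)})$. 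Subtracting yields
\begin{displaymath}
\Sigma\bigl(U_a^{(i)}\times\textstyle\bigcup_{b\geq a}U_b^{(i)}\bigr)\;>\;27\delta+13\lambda\bigl(\textstyle\bigcup_{j<a}U_j^{(i)}\bigr),
\end{displaymath}
so a positive $\Sigma$-mass sits over the $\{b\geq a\}$ slab. Pigeonholing over the finitely many $b\in\{a,\ldots,k\}$ and passing to a further subsequence in $i$ then fixes a single $d=b^{*}-a\geq 0$ for which $\Sigma(U_a^{(i)}\times U_{a+d}^{(i)})$ is bounded below uniformly along the subsequence.

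Finally one upgrades this $\Sigma$-mass statement to an actual pointwise triple. The orbit constraints $(x,y),(x',F_iy)\in K_1$ cost at most $4\delta$ of $\Sigma$-mass, via the identified marginals of $\Sigma$; hypothesis (1) and Markov's inequality let us delete a further $o(1)$ set on which the approximations $T_1^{r_i}x\approx S^ax$ or $T_1^{r_i}x'\approx S^{a+d}x'$ fail by more than $\epsilon/10$; and hypothesis (2)(b) controls the $y$-approximation analogously. The constants $27$ and $14$ in (\ref{eq:a:is:good}) are tuned precisely so that a strictly positive $\Sigma$-mass survives these successive deductions. The main obstacle is the final metric tail $d(T_1^{r_i}x',S^dT_1^{r_i}x)<\epsilon/2$, which after applying the partition approximations reduces to $x'\approx x$; this is secured by combining (2)(a) (which forces the kernels $\sigma_{F_iy}$ to stay close to $\sigma_y$ for $y\in A_i$ and hence concentrates $\Sigma$ near the diagonal of $X_1\times X_1$) with the approximate commutation of $T_1$ with $S$ away from a null set of discontinuities. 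Proposition~\ref{prop:Ratner:disjointness} then yields $S^d\times S^{-\ell}$ invariance of $\sigma$ with $d\geq 0$; the symmetric argument using (\ref{eq:a:prime:is:good}), parts (c)--(d) of (2), and $F_i^{-1}$ on $F_i(A_i)$ in place of $F_i$ on $A_i$ produces the companion conclusion with $d\leq 0$ and $S^d\times S^\ell$.
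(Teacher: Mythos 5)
Your setup (reduction to Proposition~\ref{prop:Ratner:disjointness} with $r=r_i$, $y'=F_iy$, and the marginal computations for $\Sigma$) is fine, but the argument breaks at exactly the step you call ``the main obstacle'': producing the companion point $x'$ with $d(x',x)<\epsilon'$. Hypothesis (2)(a) only says $F_iy$ is close to $y$ on most of $A_i$; since $y\mapsto\sigma_y$ is merely measurable, this yields (and only after a Lusin/Egorov step you do not carry out) weak-* closeness of $\sigma_{F_iy}$ to $\sigma_y$ on a large set. Weak-* closeness makes $\Sigma$ close to $\int_{A_i}\sigma_y\otimes\sigma_y\,d\lambda(y)$, and that measure is \emph{not} concentrated near the diagonal unless the fibers are nearly atomic: in the situation one ultimately expects ($\sigma=\lambda\cross\lambda$, so $\sigma_y=\lambda$), the $\epsilon'$-neighborhood of the diagonal carries $\Sigma$-mass only $O(\epsilon')$, with $\epsilon'$ far smaller than $\delta$. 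Consequently your pigeonhole, which fixes $d$ in advance by a lower bound on $\Sigma(U_a^{(i)}\cross U_{a+d}^{(i)})$, is incompatible with the near-diagonal requirement: nothing forces the (possibly tiny) diagonal-neighborhood mass to meet the particular cell $U_a^{(i)}\cross U_{a+d}^{(i)}$ you selected, nor to survive the removal of the $K_1$-constraint sets, whose $4\delta$ cost dwarfs $O(\epsilon')$. (The constant accounting is also off as written: after pigeonholing over the up to $2k+1$ cells, the surviving bound of order $27\delta/(2k+1)$ need not exceed the $4\delta$ you later subtract, and $k$ is not bounded in the proposition; this particular point could be repaired by reordering the deductions, but the diagonal issue cannot.)

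The paper obtains the nearby companion by a conditional-measure argument rather than a joint-mass argument, and this is the idea your proposal is missing. For $\tilde\sigma$-typical $(x,y)$ one has $\tilde{\sigma}_y(B(x,\epsilon'/2))>\rho$, and Lusin continuity of $y\mapsto\tilde\sigma_y$ in the Kantorovich--Rubinstein metric, combined with (2)(a), gives $\tilde\sigma_{F_iy}(B(x,\epsilon'))>0$; i.e.\ for most $(x,y)$ with $y\in A_i$ there \emph{exists} some $x'\in B(x,\epsilon')$ with $(x',F_iy)$ in a prescribed good set (Lemma~\ref{lemma:find:friend}, applied to $K''_i$ rather than $K$). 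The exponent $d$ is then not chosen in advance at all: it is read off a posteriori as $b-a$ where $x'\in U_b^{(i)}$, and the inequality $b\ge a$ (hence $d\ge 0$) is enforced by requiring $x'$ to avoid $\bigcup_{\ell<a}U_\ell^{(i)}$, a deletion of measure $\gamma=\lambda(\bigcup_{\ell<a}U_\ell^{(i)})$; the $27\delta+14\gamma$ budget in (\ref{eq:a:is:good}) is precisely what guarantees that the remaining good set still meets $U_a^{(i)}\cross A_i$ in positive $\sigma$-measure. If you wish to keep your $\Sigma$-formalism, you must replace ``$\Sigma$ concentrates near the diagonal'' by the fiberwise positivity statement $\sigma_{F_iy}(B(x,\epsilon'))>0$ for most $(x,y)$ — which is the content of the paper's lemma — and at that point the advance pigeonhole selection of $d$ becomes both unnecessary and unusable.
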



\begin{lemma}
\label{lemma:find:friend}
Suppose $\epsilon' > 0$ and $\delta > 0$.  Then, for 
any compact set $K \subset X_1 \cross
X_2$ with $\sigma(K) > 1-\delta$ and all $i \in \natls$ sufficiently
large, there exists a compact set $K_i'
\subset X_1 \cross X_2$ with $\sigma(K_i') > 1- 7 \delta$ such that for all
$(x,y) \in K_i'$ with $y \in A_i$,
 there exists $x' \in X_1$ with $(x', F_i y)
\in K$ and $d(x', x ) < \epsilon'$. Similarly, for all $(x,y) \in
K_i'$ with $y \in F(A_i)$, there exists $x' \in X_1$ with
$(x',F_i^{-1} y) \in K$ and $d(x',x) < \epsilon'$. \mcc{check second statement}
\end{lemma}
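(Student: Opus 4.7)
The lemma is a fiber-continuity statement for $K$: the near-identity perturbation $F_i$ (respectively $F_i^{-1}$) applied to the second coordinate should move a typical point of $K$ only within first-coordinate distance $\epsilon'$ of the \emph{exact} slice of $K$ over the new second coordinate. The two halves of the conclusion are symmetric under (2)(a),(b) $\leftrightarrow$ (2)(c),(d), so I bake both constraints into a single $K'_i$ and argue them in parallel.

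The engine is Lusin's theorem applied to the set-valued fiber map $\Psi\colon X_2 \to (\cK(X_1), d_H)$, $\Psi(y) := K_y = \{x : (x, y) \in K\}$, where $\cK(X_1)$ denotes the space of compact subsets of $X_1$ with the Hausdorff metric $d_H$. Since $K$ is compact, $\Psi$ is Borel measurable (a classical fact in set-valued analysis / Kuratowski-type selection theory). Lusin thus furnishes a compact $L \subset X_2$ with $\lambda(L) > 1 - \delta$ on which $\Psi$ is uniformly continuous; pick $\tau > 0$ so that $y, y' \in L$ with $d(y, y') < \tau$ forces $d_H(K_y, K_{y'}) < \epsilon'/2$. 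Chebyshev applied to (2)(a) gives $\lambda(B_i) < \delta$ for $B_i := \{y \in A_i : d(F_i y, y) \ge \tau\}$ once $i$ is large, and $\lambda$-invariance of $F_i$ gives $\lambda(A_i \cap F_i^{-1} L^c) \le \lambda(L^c) < \delta$. A change of variable yields the analogous bounds for $F_i^{-1}$ in place of $F_i$.

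Take $K'_i$ to be a compact subset (by inner regularity of $\sigma$) of the set of $(x, y) \in K$ with $y \in L$, such that: if $y \in A_i$, additionally $F_i y \in L$ and $y \notin B_i$; and if $y \in F_i(A_i)$, additionally $F_i^{-1} y \in L$ and $d(F_i^{-1} y, y) < \tau$. Points with $y \notin A_i \cup F_i(A_i)$ are included for free, since the conclusion is vacuous there. For $(x, y) \in K'_i$ with $y \in A_i$: $y, F_i y \in L$ and $d(y, F_i y) < \tau$ together give $d_H(K_y, K_{F_i y}) < \epsilon'/2$; since $x \in K_y$ follows from $(x, y) \in K$, there exists $x' \in K_{F_i y}$ with $d(x, x') < \epsilon'/2 < \epsilon'$, which is precisely the required statement. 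The symmetric case $y \in F_i(A_i)$ is identical with $F_i^{-1}$ in place of $F_i$.

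Totalling the measure losses --- $\sigma(K^c) < \delta$, up to three copies of $\lambda(L^c) < \delta$ (for the constraints $y \in L$, $F_i y \in L$, $F_i^{-1} y \in L$), and two Chebyshev losses $< \delta$ each for $F_i$ and $F_i^{-1}$ --- yields $\sigma(K'_i) > 1 - 7\delta$ for $i$ sufficiently large. The principal non-routine step is the Lusin-for-set-valued-maps input, namely the measurability of $y \mapsto K_y$ into $(\cK(X_1), d_H)$; everything else is Chebyshev and bookkeeping.
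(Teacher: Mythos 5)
Your argument is correct and takes a genuinely different route. The paper disintegrates the normalized restriction $\tilde{\sigma}$ of $\sigma$ to $K$ over the second factor, applies Lusin to the Borel map $y\mapsto\tilde{\sigma}_y$ into probability measures with the Kantorovich--Rubinstein metric, and first shows that for $(x,y)$ in a large-$\tilde{\sigma}$ set $K_1\subset K$ one has $\tilde{\sigma}_y(B(x,\epsilon'/2))>\rho$ for a uniform $\rho>0$; KR-continuity on the Lusin set then gives $\tilde{\sigma}_{F_iy}(B(x,\epsilon'))>\rho/2>0$, and since $\tilde{\sigma}_{F_iy}$ is carried by the fiber $K_{F_iy}$ this produces $x'$. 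You bypass the conditional measures and the $\rho$-lower-bound step entirely by applying Lusin directly to the compact-valued fiber map $\Psi\colon y\mapsto K_y$ into $(\mathcal{K}(X_1),d_H)$; uniform Hausdorff-continuity on the Lusin set gives, for every $x\in K_y$, a nearby $x'\in K_{F_iy}$, which is (more than) what is needed. The trade-off is that you must invoke Borel measurability of the fiber map; this is correct here (the fiber map of a compact $K$ is upper semicontinuous, hence Borel into the hyperspace) but is less standard in the ergodic-theory toolbox than disintegration of measures. Two small points to tighten: you should restrict $\Psi$ to $\pi_2(K)$ and take $L\subset\pi_2(K)$, since otherwise $K_{F_iy}$ could be empty and $d_H(K_y,K_{F_iy})$ is then meaningless, spoiling the continuity statement; and consequently you only get $\lambda(L^c)<2\delta$ rather than $<\delta$ (as $\lambda(\pi_2(K)^c)$ may already be nearly $\delta$), so your bookkeeping produces a multiple of $\delta$ somewhat larger than $7$. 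The precise constant is immaterial downstream, so this is cosmetic.
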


\begin{proof} 
Define a probability measure $\tilde{\sigma}$ on $X_1 \cross X_2$ by
\begin{displaymath}
\tilde{\sigma}(E) = \frac{\sigma(E \cap K)}{\sigma(K)}.
\end{displaymath}
For $y \in X_2$, 
let $\tilde{\sigma}_y$ be the conditional measure of $\tilde{\sigma}$
along $X_1 \cross \{ y\}$.

Let $B(x,\epsilon')$ denote the open ball of radius $\epsilon'$. For $\tilde{\sigma}$-
almost all $(x,y) \in X_1 \cross X_2$,
$\tilde{\sigma}_y(B(x,\epsilon'/2)) > 0$. 
Therefore, there exists $\rho(\epsilon',\delta) > 0$ 
and a set $K_1 \subset K$ with $\tilde{\sigma}(K_1) > 1-\delta$ such that for
all $(x,y) \in K_1$, 
\begin{displaymath}
\tilde{\sigma}_y(B(x,\epsilon'/2)) > \rho(\epsilon',\delta).  
\end{displaymath}
Let $\pi_2: X_1 \cross X_2 \to X_2$ denote projection to the second
factor. Since the function $y \to \tilde{\sigma}_y$ is measurable, by Lusin's theorem there exists a compact set $K_2 \subset X_2$ with
$\pi_2^*(\tilde{\sigma})(K_2) > 1 - \delta$ on which it is uniformly continuous
relative to the Kantorovich-Rubinstein metric, 
\begin{displaymath}
d(\mu,\nu) = \sup_{f} \left|\int_{X_1} f \, d\nu -\int_{X_1} f \, d\nu
\right|,  
\end{displaymath}
where the sup is taken over all $1$-Lipshitz functions $f: X_1 \to
\reals$ with $\sup |f(x)| \le  1$. 
Then, there exists $\delta' > 0$ such that for all $y,y' \in K_2$ with
$d(y',y) < \delta'$ and for all $x \in X_1$ such that $(x,y) \in K_1$
\begin{displaymath}
\tilde{\sigma}_{y'}(B(x,\epsilon')) > \rho(\epsilon',\delta)/2. 
\end{displaymath}
Then, for all
$y,y' \in K_2$, with $d(y,y') < \delta$ and all $x$ with
$(x,y) \in K_1$,
\begin{equation}
\label{eq:sigma:y:prime:B:x:epsilon}
\sigma_{y'}(B(x,\epsilon') \cap K) > 0. 
\end{equation}
We now estimate $\lambda(K_2)$. For $E \subset X_2$, 
\begin{displaymath}
\pi_2^*\tilde{\sigma}(E) = \tilde{\sigma}(\pi_2^{-1}(E)) =
\frac{\sigma(\pi_2^{-1}(E) \cap K)}{\sigma(K)} \le
\frac{\sigma(\pi_2^{-1}(E))}{1-\delta} = \frac{\lambda(E)}{1-\delta}.
\end{displaymath}
Therefore, 
\begin{displaymath}
\lambda(K_2) \ge (1-\delta) \pi_2^*\tilde{\sigma}(K_2) \ge
(1-\delta)^2 \ge 1 - 2 \delta.
\end{displaymath}


By condition (2a) in Proposition~\ref{prop:fact:qfact}, for $i$
sufficiently large, there exists a
compact set $K_{3,i} \subset X_1 \cross X_2$ with $\sigma(K_{3,i}) > 1-\delta$ 
such that for $(x,y) \in K_{3,i}$, $y \in A_i$ and $i$ sufficiently large, 
\begin{equation}
\label{eq:T2Liy:close:to:y}
d(F_i y, y) < \delta'.
\end{equation}
Now let 
\begin{displaymath}
K'_i = (id \times F_i)^{-1}\left(K_1 \cap (X_1 \cross K_2) \right)
\cap K_{3,i} \cap (X_1 \cross K_2).
\end{displaymath}
Then, 
\begin{displaymath}
\sigma(K'_i)> 1 - 7 \delta.
\end{displaymath}
Suppose $(x,y) \in K'_i$, with $y \in A_i$. For large enough $i$,
(\ref{eq:T2Liy:close:to:y}) holds, and also $(x,y) \in K_1 \cross K_2$ and
$F_i y \in K_2$. Thus 
(\ref{eq:sigma:y:prime:B:x:epsilon}) holds (with $y' = F_i
y$). This implies the first statement of the lemma. The proof of the
second statement is identical.
\end{proof}

\begin{proof}[Proof of Proposition~\ref{prop:fact:qfact}] We establish
  the (\ref{eq:a:is:good}) case. The (\ref{eq:a:prime:is:good}) case is analogous. 
The basic strategy is to choose $(x,y) \in U_a^{(i)} \cross A_i$, and apply
Proposition~\ref{prop:Ratner:disjointness} with $r=r_i$ to the points
$(x,y)$ and $(x',F y)$, where $x'$ is as in
Lemma~\ref{lemma:find:friend}. 
We now give the details. 

Suppose $\delta > 0$ and $0 < \epsilon < \delta$ 
are arbitrary. Let $\Delta$ denote the union of
the points of discontinuity of $S^j$, $1 \le j \le k$. There
exists $c_1(\epsilon) > 0$ such that if we let 
\begin{displaymath}
K_0 = \{ x \in X_1 \st d(x,\Delta) > c_1(\epsilon) \} 
\end{displaymath}
then $\lambda(K_0) > 1-\epsilon > 1 - \delta$. Let 
\begin{displaymath}
K_{00} = \{ x \in X_1 \st d(x,\Delta) > c_1(\epsilon)/2 \}.
\end{displaymath}
Since $K_{00}$ is compact and
$S$ is continuous on $K_{00}$, there exists $\epsilon' > 0$ such that if
$x_1, x_2 \in K_{00}$, with $d(x_1,x_2) < \epsilon'$ then for all $1
\le j \le k$, $d(S^j x_1, S^j x_2) < \epsilon/6$. 
Without loss of generality, we may assume that
$\epsilon' < c_1(\epsilon)/2$. Then, we have, for all $1 \le j \le k$, 
\begin{equation}
\label{eq:S:uniformly:cts}
d(S^{j} x, S^{j} x') < \epsilon/6 \qquad\text{if $x \in K_0$ and $d(x',x) <
  \epsilon'$}. 
\end{equation}

Let $a$ be as in Proposition~\ref{prop:fact:qfact} (3). Write
\begin{displaymath}
\gamma = \lambda(\bigcup_{\ell < a} U_\ell^{(i)})
\end{displaymath}
 

We may assume that $i$ is large enough so that there
exists a compact set  
\begin{displaymath}
K_{1b} \subset  X_1 \setminus \bigcup_{\ell < a} U_\ell^{(i)}
\end{displaymath}
with $\lambda(K_{1b}) > 1- 2\gamma$.

In view of assumption (1) of Proposition~\ref{prop:fact:qfact}, there
exists a compact set $K_{1a} \subset X_1$ with $\lambda(K_{1a}) >
1-\delta$ such that 
\begin{displaymath}
d(T_1^{r_i} x, S^j x) < \epsilon/6 \qquad\text{ for $x \in U^{(i)}_j$.}
\end{displaymath}

In view of assumption (2b) of Proposition~\ref{prop:fact:qfact}, there exists a
compact set $K_{2b} \subset X_2$ with $\lambda(K_{2b}) > 1-\delta$
such that for $y \in K_{2b} \cap A_i$ and $i \in \natls$ sufficiently 
large, 
\begin{displaymath}
|S(T_2^{r_i}y)-T_2^{r_i}F_iy| < \frac{\epsilon}{2}.
\end{displaymath}

As in the proof of Proposition \ref{prop:Ratner:disjointness}, let $K$ be a compact set so that $\sigma(K)>1-\delta$ and $(T_1^{d}\times T_2^{-\ell})K$ are compact and disjoint for all $0\leq d\leq k$ and $0\leq \ell\leq 3$. Formally, $K$ may depend on $d$, but without loss of
generality we may assume that the same $K$ works for all $0 \le d
\le k$. 

Let 
\begin{displaymath}
K''_i = ((K_{1a} \cap
K_{1b}) \cross X)\cap (T_1\cross T_2)^{-r_i} K
\end{displaymath}
Note that $\sigma(K''_i) > 1 - 3 \delta - 2\gamma$. 
Let $K'_i$ be as in Lemma~\ref{lemma:find:friend} for $K''_i$ instead
of $K$. We have
\begin{displaymath}
\sigma(K'_i) > 1 - 21 \delta - 14 \gamma.
\end{displaymath}
Let
\begin{displaymath}
\Omega= (T_1\cross T_2)^{-r_i}(K \cap (K_0 \cross K_0)) \cap K'_i  \cap (X_1 \cross
K_{2b}) \cap (K_0 \cross K_0). 
\end{displaymath}
Then, 
\begin{displaymath}
\sigma(\Omega) \ge 1 - 3\delta - (21 \delta + 14 \gamma) - \delta - 2\delta
= 1 - 27 \delta - 14 \gamma.
\end{displaymath}
Let
\begin{displaymath}
G_i = \{(x,y):x\in U_{a}^{(i)} \text{ and } y \in A_i\}.
\end{displaymath}
By (\ref{eq:a:is:good}), $\sigma(\Omega
\cap G_i) > 0$. 

Now let $(x,y)$ be any point in $\Omega \cap G_i$. Then, by
Lemma~\ref{lemma:find:friend}, there exists $x' \in K_{1a} \cap K_{1b}$ with
$(x',F_i y)  \in (T_1\cross T_2)^{-r_i} K$. Because 
\begin{displaymath}
(x',y') = (x', F y) \in ((K_{1a} \cap
K_{1b}) \cross X)\cap (T_1\cross T_2)^{-r_i} K
\end{displaymath}
letting $r=r_i$ conditions (a) and (b) of
Proposition~\ref{prop:Ratner:disjointness} hold. Also, since $x' \in
K_{1b}$, $x' \not\in \bigcup_{\ell < a} U_a^{(i)}$, and thus we may assume
$x' \in U_b^{(i)}$ for some $b \ge a$. Then, since $x' \in
K_{1a}$, 
\begin{displaymath}
d(T_1^{r_i} x', S^{b} x') < \epsilon/6. 
\end{displaymath}
Also, in view of Lemma~\ref{lemma:find:friend}, 
\begin{displaymath}
d(x', x) < \epsilon',  
\end{displaymath}
and since $x \in K_{1a}$, 
\begin{displaymath}
d(T_1^{r_i} x, S^a x) < \frac \epsilon 6. 
\end{displaymath}
We have $x \in K_0$ and $T_1^{r_i} x \in K_0$. Therefore, by (\ref{eq:S:uniformly:cts}), 
\begin{multline*}
d(T_1^{r_i} x', S^{b-a} T_1^{r_i} x) \le d(T_1^{r_i} x', S^{b} x') + d(S^{b}
x', S^{b} x) + d(S^{b} x, S^{b-a} T_1^{r_i} x)  \\ 
< \frac{\epsilon}{3} + \frac{\epsilon}{6} + \frac{\epsilon}{6} = \frac{\epsilon}{2}.
\end{multline*}
Similarly,  
\begin{displaymath}
d(T_2^{r_i} y', S^{-r} T_2^{r_i} y) = d(T_2^{r_i} F_iy, S^{-r} T_2^{r_i} y) <
\frac{\epsilon}{2}.   
\end{displaymath}

Therefore, assumption (c) in
Proposition~\ref{prop:Ratner:disjointness} also holds with $d_1=b-a$ and $d_2=\ell$, and
Proposition~\ref{prop:Ratner:disjointness} can be applied. 
\end{proof}

Let $\lambda$ denote Lebesgue measure on $[0,1]$. 
Let $S: [0,1] \to [0,1]$ be a 3-IET, $T_1=S^n$ and $T_2=S^m$. Let
 $\sigma$ be an ergodic joining of $T_1$ and $T_2$.  

\begin{corollary}
\label{cor:trivial:joining}
If $S$ is weakly mixing and the conditions of Proposition~\ref{prop:fact:qfact} are
satisfied then $\sigma = \lambda \cross \lambda$. 
\end{corollary}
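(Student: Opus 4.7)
The plan is to exploit the extra invariance that Proposition~\ref{prop:fact:qfact} grants $\sigma$, extract from it invariance under $(S^a,\id)$ and $(\id,S^a)$ for some $a>0$, and then use unique ergodicity of $S^a$ to conclude that $\sigma$ splits as a product.

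First, by Proposition~\ref{prop:fact:qfact}, $\sigma$ is invariant under either $(S^d,S^{-e})$ with $d\ge 0$ or $(S^d,S^e)$ with $d\le 0$, for some $e\in\{1,2,3\}$; passing to an inverse reduces us to the first case with $d\ge 0$. Since $\sigma$ is also $(S^n,S^m)$-invariant as a joining, forming the integer combinations
$(S^n,S^m)^e\cdot(S^d,S^{-e})^m$ and $(S^n,S^m)^d\cdot(S^d,S^{-e})^{-n}$ produces invariance under $(S^a,\id)$ and $(\id,S^a)$, where $a=ne+dm>0$.

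Second, I claim $S^a$ is uniquely ergodic. By Lemma~\ref{lemma:3iet:to:rotation}, $S$ is the induced map of an irrational rotation to a subinterval, and induced maps of uniquely ergodic transformations are uniquely ergodic; hence $S$ is uniquely ergodic with $\lambda$ the unique invariant probability measure. Weak mixing of $S$ implies $S^a$ is also weakly mixing, hence ergodic for $\lambda$. The standard averaging argument then promotes $S^a$-ergodicity of $\lambda$ to unique ergodicity: any $S^a$-invariant probability $\mu$ averages under $\frac{1}{a}\sum_{k=0}^{a-1}(S^k)_*$ to an $S$-invariant probability, which must equal $\lambda$; extremality of $\lambda$ in the $S^a$-invariant simplex forces each $(S^k)_*\mu=\lambda$, so $\mu=\lambda$.

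Finally, disintegrate $\sigma=\int\sigma_x\,d\lambda(x)$ over the first coordinate (whose marginal is $\lambda$ because $\sigma$ is a joining). The $(\id,S^a)$-invariance of $\sigma$ translates into $S^a$-invariance of $\sigma_x$ for $\lambda$-a.e.\ $x$; unique ergodicity of $S^a$ then forces $\sigma_x=\lambda$ a.e., giving $\sigma=\lambda\cross\lambda$. The only nontrivial step is verifying unique ergodicity of the power $S^a$; the rest of the argument is routine bookkeeping with disintegration and integer linear algebra on the exponents.
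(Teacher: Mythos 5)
Your proof is correct, but it takes a genuinely different route from the paper's. After combining the invariances (both proofs do the same integer linear algebra on exponents, and both use total ergodicity coming from weak mixing), the paper applies its Lemma~\ref{lemma:rudolph}: if $\sigma$ is a joining that is $T\times\id$-invariant for an ergodic $T$, then $\sigma$ is a product, via a one-line Radon--Nikodym derivative argument. This needs only \emph{ergodicity} of $S^{a}$, which follows immediately from weak mixing. You instead pass to $(\id\times S^{a})$-invariance, disintegrate $\sigma$ over the first coordinate, and appeal to \emph{unique ergodicity} of $S^{a}$ so that a.e.\ conditional measure $\sigma_x$ must be $\lambda$. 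This works (the return time of the rotation to $J$ is bounded, so the Kac lifting you implicitly use makes the induced-map unique ergodicity claim legitimate, and the averaging argument upgrading unique ergodicity of $S$ plus ergodicity of $S^a$ to unique ergodicity of $S^a$ is standard), but it relies on the specific structure of $S$ as the induced map of an irrational rotation, and it requires establishing a strictly stronger property (unique ergodicity of $S^a$) than what the conclusion actually uses. The paper's route is both shorter and more general: Lemma~\ref{lemma:rudolph} is valid for arbitrary ergodic measure-preserving systems and avoids any appeal to topological dynamics.

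One small nitpick: when you write ``The $(\id,S^a)$-invariance of $\sigma$ translates into $S^a$-invariance of $\sigma_x$ for $\lambda$-a.e.\ $x$,'' it would be worth noting explicitly that you are invoking essential uniqueness of the disintegration and that the $(\id\times S^a)$-invariance does not move the base point, so the conditionals genuinely satisfy $(S^a)_*\sigma_x=\sigma_x$ rather than $(S^a)_*\sigma_x=\sigma_{\phi(x)}$ for some nontrivial $\phi$. As stated the step is correct, but it is exactly the sort of point that deserves a sentence.
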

Note that by \cite{BN} all the 3-IETs we consider are weakly mixing.

This corollary uses the following standard result:
\begin{lemma} (See for example \cite[Lemma~6.14]{Rudolph:book}.) 
\label{lemma:rudolph}
If $(X,B,\mu,T)$ is ergodic and $\sigma$ is a joining of $(X,M_1,\mu,S_1)$ and 
$(Y,M_2,\nu_2,S_2)$ that is $T \times id$ invariant then $\sigma =\nu_1\times \nu_2$.
\end{lemma}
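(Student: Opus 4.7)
The plan is to disintegrate $\sigma$ over its second marginal and exploit ergodicity of $T$ on the resulting fiber measures. Taking the underlying spaces to be standard Borel (the usual convention in this setting), one writes
\[
\sigma = \int_Y \sigma_y \, d\nu_2(y),
\]
where each $\sigma_y$ is a probability measure on $X$. Since $\sigma$ is a joining of the two systems, its first marginal equals $\mu$, so
\[
\mu = \int_Y \sigma_y \, d\nu_2(y).
\]

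Next I would transfer the invariance hypothesis to the disintegration. By essential uniqueness of conditional measures, $(T \times \mathrm{id})_*\sigma = \sigma$ forces $T_*\sigma_y = \sigma_y$ for $\nu_2$-a.e.\ $y$. In other words, each fiber measure $\sigma_y$ is itself a $T$-invariant probability measure on $X$, and the $\nu_2$-average of these invariant measures recovers the ergodic measure $\mu$.

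The main step is then to conclude $\sigma_y = \mu$ for $\nu_2$-a.e.\ $y$, which I would do via Birkhoff's ergodic theorem for $(X,\mu,T)$. Fix a countable family $\{f_k\}$ dense in $L^1(X,\mu)$ (or dense in $C(X)$ if $X$ is compact metric). For each $k$, Birkhoff provides a full-$\mu$-measure set $E_k$ on which $\frac{1}{N}\sum_{n=0}^{N-1} f_k\circ T^n \to \int f_k\, d\mu$ pointwise. Since $\mu = \int \sigma_y \, d\nu_2$, one has $\int \sigma_y(E_k)\, d\nu_2(y) = 1$, so $\sigma_y(E_k) = 1$ for $\nu_2$-a.e.\ $y$. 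Combining pointwise convergence on $E_k$ with $T$-invariance of $\sigma_y$ and dominated convergence yields $\int f_k \, d\sigma_y = \int f_k \, d\mu$ for that $y$ and every $k$. Intersecting the countably many resulting full-measure sets of $y$ and using density of $\{f_k\}$ gives $\sigma_y = \mu$ for $\nu_2$-a.e.\ $y$, whence $\sigma = \mu \times \nu_2$.

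The only real obstacle is technical bookkeeping: verifying that the disintegration exists, that the invariance lifts to the fibers, and that a countable family $\{f_k\}$ separates measures on $X$. All three rest on standard Borel hypotheses implicit in the formulation of the lemma; conceptually the argument merely expresses the fact that an ergodic invariant measure is an extreme point of the simplex of invariant probability measures, so any representation of it as an average of invariant measures must be trivial.
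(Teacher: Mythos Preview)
Your argument is correct, but the paper takes a different and somewhat more elementary route. Rather than disintegrating $\sigma$ into fiber measures $\sigma_y$, the paper fixes a measurable set $A\subset Y$ of positive $\nu_2$-measure and considers the measure $\sigma_A(B)=\sigma(B\times A)$ on $X$. Since the first marginal of $\sigma$ is $\mu$, one has $\sigma_A\ll\mu$, and the Radon--Nikodym derivative $f_A=d\sigma_A/d\mu$ is, by $T\times\mathrm{id}$-invariance, a $T$-invariant function; ergodicity of $T$ forces $f_A$ to be constant (equal to $\nu_2(A)$), so $\sigma(B\times A)=\mu(B)\nu_2(A)$ on all measurable rectangles and hence $\sigma=\mu\times\nu_2$.

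The two arguments are closely related---your fiber measures $\sigma_y$ are essentially the pointwise version of the paper's $\sigma_A$---but the paper's version sidesteps the disintegration theorem, the need for a countable separating family, and the verification that invariance passes to the fibers. Your approach, on the other hand, makes the underlying reason (an ergodic measure is extremal among invariant measures, so any barycentric representation of it is trivial) more transparent.
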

We include a proof because the statement in \cite{Rudolph:book} is slightly more specific.
\begin{proof}[Proof of Lemma~\ref{lemma:rudolph}] Given $A \subset Y$ with positive $\nu_2$ measure let $\sigma_A(B)=\sigma (B \times A)$. This is a measure on $X$. Because $\sigma$ has marginals $\mu,\nu_2$ this measure is absolutely continuous with respect to $\mu$. So it has a Radon-Nikodym derivative $f_A$. By our assumption this is a  $T$ invariant function and so it is constant.  This implies any two rectangles with the same dimensions have the same measure and thus $\sigma$ is the product measure.
\end{proof}

\begin{proof}[Proof of Corollary \ref{cor:trivial:joining}] We show only
  the (\ref{eq:a:is:good}) case, since the (\ref{eq:a:prime:is:good})
  case is similar. Because $\sigma$ is $S^{d_1} \times S^{-r}$ invariant
  it is $S^{md_1}\times S^{-m}$ invariant. Using the fact that $\sigma$ is a
  joining of $S^n$ and $S^m$, this implies that $\sigma$ is
  $S^{md_1+nr}\times id$ invariant. Since $S$ is weak mixing and thus
  totally ergodic, $S^{md_1+nr}$ is ergodic and so by Lemma~\ref{lemma:rudolph}, $\sigma=\lambda \cross \lambda$. 
\end{proof}

In \S\ref{sec:facts:rotation}-\S\ref{sec:work} we will show that
Proposition~\ref{prop:fact:qfact} can be applied to prove
Theorem~\ref{theorem:disjoint:powers}.

\section{Facts about rotations}
\label{sec:facts:rotation}
Let $\|q_k\alpha\|=dist(q_k\alpha,\mathbb{Z})$. Let $\lambda$ be Lebesgue measure. 
$d(x,y)=\min\{|x-y|,1-|x-y|\}$

\begin{lemma}
\label{lemma:q:k:plus:1:ak:qk}
$q_{k+1}=a_{k+1}q_k+q_{k-1}$
\end{lemma}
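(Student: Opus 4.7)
The plan is to verify the standard continued fraction recurrence, which is purely a statement about the denominators $q_k$ of the convergents $p_k/q_k$ of $\alpha = [a_0; a_1, a_2, \dots]$. Since this is a classical identity, the approach is just to recall the usual matrix/induction proof.

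First I would fix the standard convention: set $q_{-1} = 0$, $q_0 = 1$, and show by induction on $k \geq 0$ that the $k$-th convergent satisfies
\begin{displaymath}
\begin{pmatrix} p_k & p_{k-1} \\ q_k & q_{k-1} \end{pmatrix}
= \begin{pmatrix} a_0 & 1 \\ 1 & 0 \end{pmatrix}
\begin{pmatrix} a_1 & 1 \\ 1 & 0 \end{pmatrix} \cdots
\begin{pmatrix} a_k & 1 \\ 1 & 0 \end{pmatrix}.
\end{displaymath}
The base case $k=0$ is immediate from $p_0/q_0 = a_0/1$, and the inductive step follows by multiplying the product on the right by the matrix corresponding to $a_{k+1}$. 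Reading off the lower-left entry of the product on the right-hand side for index $k+1$ gives exactly
\begin{displaymath}
q_{k+1} = a_{k+1} q_k + q_{k-1},
\end{displaymath}
which is the claimed recurrence.

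The main (minor) obstacle is just matching conventions: the lemma statement treats $q_{k+1}, q_k, q_{k-1}$ as already defined via the continued fraction $\alpha = [a_0; a_1, \dots]$, so one only needs to invoke the well-known formula. I would simply cite a standard reference such as Khinchin's \emph{Continued Fractions} rather than reproducing the computation, since the lemma is included only to fix notation for use in later sections. In fact, given that Lemma~\ref{lemma:q:k:plus:1:ak:qk} merely restates a formula that was already displayed in the introduction (in the paragraph defining $a_k, p_k, q_k$), the proof can be a one-line pointer to the classical theory of continued fractions.
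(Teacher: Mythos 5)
Your proposal is correct: this is the classical recurrence for continued fraction convergents, and the matrix-product induction you sketch is the standard proof. The paper in fact gives no proof of this lemma at all (it simply restates the identity already displayed in the introduction, implicitly deferring to standard references like Khinchin), so your treatment — a short argument plus a pointer to the classical theory — is entirely in the spirit of what the paper does.
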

\begin{lemma} 
\label{lemma:good:bound}
$\frac 1 {q_{k+1}+q_k}<\|q_{k}\alpha\|<\frac 1 {a_{k+1}q_k}$
\end{lemma}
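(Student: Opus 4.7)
The plan is to use the standard exact formula for $q_k\alpha - p_k$ in terms of the tail of the continued fraction expansion, and then sandwich the denominator. First I would recall the matrix/recursion identity
\[
p_k q_{k-1} - p_{k-1} q_k = (-1)^{k-1},
\]
which follows by induction from the recurrences $p_{k+1}=a_{k+1}p_k+p_{k-1}$ and $q_{k+1}=a_{k+1}q_k+q_{k-1}$ (Lemma~\ref{lemma:q:k:plus:1:ak:qk}). Letting $\alpha_{k+1}=[a_{k+1};a_{k+2},\dots]$ denote the $(k+1)$-st complete quotient, the formula
\[
\alpha \;=\; \frac{\alpha_{k+1}p_k + p_{k-1}}{\alpha_{k+1}q_k + q_{k-1}}
\]
combined with the identity above gives the exact expression
\[
q_k\alpha - p_k \;=\; \frac{(-1)^{k}}{\alpha_{k+1}q_k + q_{k-1}}.
\]

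Next I would argue that for $k$ as in the statement, $p_k$ is the integer nearest $q_k\alpha$, so that $\|q_k\alpha\|=|q_k\alpha-p_k|$; this is immediate from the above display once one checks $|q_k\alpha-p_k|<1/2$, which follows because $\alpha_{k+1}q_k+q_{k-1}\geq q_k+q_{k-1}\geq 2$ for $k\geq 1$ (the boundary cases $k=0$ are handled directly). Hence
\[
\|q_k\alpha\| \;=\; \frac{1}{\alpha_{k+1}q_k + q_{k-1}}.
\]

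Finally I would sandwich $\alpha_{k+1}q_k+q_{k-1}$. Since $a_{k+1}<\alpha_{k+1}<a_{k+1}+1$ (the strict upper bound uses that the continued fraction does not terminate, i.e.\ $\alpha$ is irrational, which is already implicit in the setup), one gets
\[
a_{k+1}q_k + q_{k-1} \;<\; \alpha_{k+1}q_k + q_{k-1} \;<\; (a_{k+1}+1)q_k + q_{k-1} \;=\; q_{k+1}+q_k,
\]
using the recurrence in the last equality. Taking reciprocals yields
\[
\frac{1}{q_{k+1}+q_k} \;<\; \|q_k\alpha\| \;<\; \frac{1}{a_{k+1}q_k + q_{k-1}} \;\leq\; \frac{1}{a_{k+1}q_k},
\]
which is exactly the claim. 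There is no real obstacle here; the only mild subtlety is the boundary case $k=0$, which I would check by hand, and making sure that $\|q_k\alpha\|$ is indeed realized by the convergent $p_k$ rather than a nearer integer, which follows from the closed-form expression above.
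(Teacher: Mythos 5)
The paper does not give a proof here at all; it simply cites Khinchin's book (``4 lines before equation 34''). Your argument is the standard textbook proof of that inequality, and it is correct: the identity $q_k\alpha - p_k = (-1)^k/(\alpha_{k+1}q_k + q_{k-1})$ follows from $p_kq_{k-1}-p_{k-1}q_k = (-1)^{k-1}$ and the complete-quotient formula, the sandwich $a_{k+1} < \alpha_{k+1} < a_{k+1}+1$ is valid because $\alpha$ is irrational, and the recurrence $q_{k+1}=a_{k+1}q_k+q_{k-1}$ converts the bounds into the stated form (your upper bound $1/(a_{k+1}q_k+q_{k-1})$ is in fact slightly sharper than the lemma's). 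So you are supplying the proof the paper chose to outsource, by essentially the route the cited source uses.

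One small point worth making precise: the step $\|q_k\alpha\| = |q_k\alpha - p_k|$ needs $|q_k\alpha-p_k|<1/2$. Your bound $\alpha_{k+1}q_k+q_{k-1} > q_k + q_{k-1} \ge 2$ establishes this for $k\ge 1$. For $k=0$ the lemma is genuinely false in the case $a_1=1$: then $q_1=q_0=1$, so the claimed lower bound is $1/2$, while $\|\alpha\|\le 1/2$ always. So ``handled directly'' should really be ``excluded''; the statement should be read as holding for $k\ge1$ (which is all the paper ever uses, since $k\to\infty$).
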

See \cite{Khinchin:book}, 4 lines before equation 34.
\begin{lemma}
\label{lemma:orbit:dense} 
$\{R^ix\}_{i=0}^{q_k-1}$ is $2\|q_{k-1}\alpha\|$ dense for all $x$.
\end{lemma}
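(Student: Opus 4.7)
The plan is to reduce to the classical three-distance (Steinhaus) structure at the privileged value $N = q_k$. Since $R$ is a rotation, the gap pattern of $\{R^i x\}_{i=0}^{q_k-1}$ is independent of the basepoint $x$ (translating by $x$ is an isometry of the circle), so I would take $x = 0$ without loss of generality and analyze the $q_k$ arcs into which $\{0, \alpha, 2\alpha, \dots, (q_k-1)\alpha\}$ cuts $S^1$. To show $2\|q_{k-1}\alpha\|$-density it suffices to bound the maximal gap.

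The key computation I would carry out is to identify the two gap lengths at $N=q_k$ as exactly $\|q_{k-1}\alpha\|$ and $\|q_{k-1}\alpha\|+\|q_k\alpha\|$. This comes from combining Lemma~\ref{lemma:q:k:plus:1:ak:qk} (written in the form $\|q_{k-2}\alpha\| = a_k\|q_{k-1}\alpha\| + \|q_k\alpha\|$, which uses that $q_k\alpha - p_k$ alternates sign) with the standard picture that the closest returns to $0$ in time $< q_{k+1}$ are at times $q_{k-1}$ and $q_k$, on opposite sides of $0$. Once those two sizes are in hand, the maximum gap is $\|q_{k-1}\alpha\|+\|q_k\alpha\| < 2\|q_{k-1}\alpha\|$ because $\|q_k\alpha\| < \|q_{k-1}\alpha\|$, which gives the claimed density.

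If one prefers to avoid citing the three-distance theorem, a self-contained alternative is induction on $k$: the sub-orbit $\{R^{jq_{k-1}}\cdot 0\}_{j=0}^{a_k}$ is an arithmetic progression on $S^1$ with common difference $\|q_{k-1}\alpha\|$ (since $q_{k-1}\alpha$ lies within $\|q_{k-1}\alpha\|$ of an integer and all $a_k$ steps stay on one side of $0$ by definition of the convergents). Translating this progression by each $R^m\cdot 0$ for $0 \le m < q_{k-1}$ produces the entire orbit $\{R^i\cdot 0\}_{i=0}^{q_k-1}$, and one checks that the residual gap not covered by a single progression is exactly $\|q_{k-1}\alpha\|+\|q_k\alpha\|$, which by the inductive hypothesis applied to the $q_{k-1}$-orbit is itself filled to within $2\|q_{k-2}\alpha\|$; a short reorganization then upgrades this to the desired bound $2\|q_{k-1}\alpha\|$.

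The only real obstacle is the sign/size bookkeeping in identifying the two gap lengths; there is no conceptual subtlety, just careful use of Lemmas~\ref{lemma:q:k:plus:1:ak:qk} and~\ref{lemma:good:bound}. I would write the proof in at most a few lines, directly quoting the three-distance theorem for convergent denominators as the cleanest path.
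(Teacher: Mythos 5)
Your argument is correct, but it takes a genuinely different route from the paper. You invoke the three-distance structure at the convergent denominator $N=q_k$: the gaps of $\{0,\alpha,\dots,(q_k-1)\alpha\}$ take exactly the two values $\|q_{k-1}\alpha\|$ and $\|q_{k-1}\alpha\|+\|q_k\alpha\|$ (this identification is indeed the classical refinement, and it follows as you say from the recursion $\|q_{k-2}\alpha\|=a_k\|q_{k-1}\alpha\|+\|q_k\alpha\|$ together with the alternating best approximations), so the maximal gap is $\|q_{k-1}\alpha\|+\|q_k\alpha\|<2\|q_{k-1}\alpha\|$, which is even slightly sharper than the stated bound. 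The paper instead argues softly: after the same reduction to $x=0$ by isometry, it approximates the orbit by the rational orbit $\{\ell p_k/q_k \bmod 1\}_{\ell=0}^{q_k-1}$, which is exactly $\tfrac1{q_k}$-spaced, notes each orbit point is within $\|q_k\alpha\|$ of its rational counterpart, and then uses Lemma~\ref{lemma:good:bound} to get $\tfrac1{q_k}<2\|q_{k-1}\alpha\|$. The paper's approach is more self-contained (it uses only the lemmas already stated and no exact gap analysis), while yours buys the precise gap lengths at the cost of quoting the three-distance theorem; either is acceptable here. One caution about your optional induction sketch: the set $\{R^{jq_{k-1}}R^m 0 : 0\le j\le a_k,\ 0\le m<q_{k-1}\}$ is a proper superset of $\{R^i 0\}_{i=0}^{q_k-1}$ (it has $(a_k+1)q_{k-1}\ge q_k$ indices), so the "residual gap" bookkeeping and the final "reorganization" would need genuine care; as written that alternative is not a proof, but your primary three-distance argument stands on its own.
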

\begin{proof}Because $R$ is an isometry, it suffices to prove this for $x=0$. 
We approximate $\{R^n(0)\}_{n=0}^{q_k-1}$ by $\{n\frac{p_k}{q_k} \text{ mod 1}\}_{n=0}^{q_k-1}$, a set that is $\frac 1 {q_k}$ dense. 
 Now $R^\ell(0)$ is within $\|q_k\alpha\|=q_k|\alpha-\frac{p_k}{q_k}|$ of $\ell \frac{p_k}{q_k}$ mod 1 for $0\leq \ell\leq q_k$. 
 Since $\|q_k\alpha\|<\|q_{k-1}\alpha|$ and by Lemma \ref{lemma:good:bound} we have $\frac{1}{q_k}<2\|q_{k-1}\alpha\|$. This establishes the lemma.
\end{proof}
\begin{lemma}
\label{lemma:orbit:sep} 
$\{R^ix\}_{i=0}^{q_k-1}$ is $\|q_{k-1}\alpha\|$ separated for all $x$.
\end{lemma}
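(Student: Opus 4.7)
The plan is to reduce the statement to a best approximation property of continued fraction convergents, and then cite (or briefly derive) that property from the previous lemmas.

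First, since $R$ is the isometric translation by $\alpha$ on the circle, for any $0 \le i < j \le q_k - 1$ one has
\begin{displaymath}
d(R^i x, R^j x) \;=\; d(R^{j-i}(0), 0) \;=\; \|(j-i)\alpha\|,
\end{displaymath}
so the separation statement is equivalent to showing that $\|n\alpha\| \ge \|q_{k-1}\alpha\|$ for every integer $n$ with $1 \le n \le q_k - 1$. Thus $x$ drops out and the problem becomes a statement purely about the rotation number $\alpha$.

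Next I would invoke the classical best approximation property of the convergents: among all integers $1 \le n < q_k$, the quantity $\|n\alpha\|$ is minimized precisely at $n = q_{k-1}$. This is a standard fact found, for example, in Khinchin's book (in the same vicinity as Lemma~\ref{lemma:good:bound}), so it is consistent with the style of this section simply to cite it. If I wanted to be self-contained, I would argue by contradiction: suppose $0 < n < q_k$ with $\|n\alpha\| < \|q_{k-1}\alpha\|$. Combining this with Lemma~\ref{lemma:good:bound}, which gives $\|q_{k-1}\alpha\| < 1/(a_k q_{k-1})$ and $\|q_{k-1}\alpha\| > 1/(q_k + q_{k-1})$, together with the three-distance style pigeonhole on the orbit $\{i\alpha \bmod 1\}_{i=0}^{q_k-1}$, would contradict the density/separation structure produced by the convergents $p_{k-1}/q_{k-1}$ and $p_k/q_k$.

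Combining the two steps, $d(R^i x, R^j x) = \|(j-i)\alpha\| \ge \|q_{k-1}\alpha\|$ for all $0 \le i < j \le q_k - 1$, which is exactly the claimed separation.

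The only "obstacle" here is essentially bookkeeping: the statement is a one-line consequence of a very standard continued fraction fact, and the main choice is whether to cite Khinchin (matching the style the authors already use in this section) or to include a short self-contained argument. I would opt for the citation, keeping the proof to just the isometry reduction plus the reference.
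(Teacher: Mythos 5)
The paper gives no proof of this lemma at all — it is stated and then immediately followed by the next lemma, so the authors treat it as a standard fact. Your reduction is the right one: by the isometry property of $R$, the claimed separation is equivalent to $\|n\alpha\| \ge \|q_{k-1}\alpha\|$ for all $1 \le n \le q_k - 1$, which is the classical best approximation property of the continued fraction convergents (found in Khinchin near the reference the authors already cite for Lemma~\ref{lemma:good:bound}). Citing that fact is exactly in keeping with the paper's treatment. The only soft spot is your optional self-contained sketch: the appeal to ``three-distance style pigeonhole'' contradicting ``the density/separation structure'' is not an actual argument — it names a tool without explaining how the contradiction is reached. If you wanted a genuinely self-contained derivation you would instead use the recursion $\|q_{j-1}\alpha\| = a_{j+1}\|q_j\alpha\| + \|q_{j+1}\alpha\|$ (or the matrix/continuant identity behind it) to show by induction that no $n < q_k$ can beat $q_{k-1}$. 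But since you sensibly choose the citation route, the proof as written is fine and matches the paper's intent.
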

\begin{lemma}
\label{lemma:ret:time} 
If $x$ is in an interval $I$ of size $\|q_k\alpha\|$ then the return
time of $x$ to $I$ is either $q_{k+1}$ or $q_{k+1}+q_k$. If $k$ is even and $I =
[-\|q_k \alpha \|,0)$ then the return time of $q_{k+1} + q_k$ takes
place on $[-\|q_{k+1} \alpha\|,0)$. If $k$ is odd and $I=[0,\|q_k\alpha\|)$ then the return time 
of $q_{k+1}+q_k$  takes place on $[\|q_k\alpha\|-\|q_{k+1}\alpha\|,\|q_k\alpha\|)$. 
\end{lemma}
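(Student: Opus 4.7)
The proof reduces to a direct computation once the signs of $q_k\alpha \bmod 1$ are pinned down. Because $R$ is an isometry, the return time depends only on $|I| = \|q_k\alpha\|$, so by translating we may take $I$ to be the interval stated for the appropriate parity of $k$; I focus on $k$ even, $I = [-\|q_k\alpha\|, 0)$. Since $p_k/q_k < \alpha < p_{k+1}/q_{k+1}$ for $k$ even, we have $R^{q_k}x \equiv x + \|q_k\alpha\|$ and $R^{q_{k+1}}x \equiv x - \|q_{k+1}\alpha\|$ modulo $1$, hence $R^{q_{k+1}+q_k}x \equiv x + \|q_k\alpha\| - \|q_{k+1}\alpha\|$.

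The argument then splits into four short computations. (i) For $1 \le j < q_{k+1}$, the standard best-approximation property gives $\|j\alpha\| \ge \|q_k\alpha\| = |I|$, with equality only at $j = q_k$; a case split on the sign of $j\alpha \bmod 1$ shows $R^j(I) \cap I = \emptyset$ (at $j = q_k$, $R^{q_k}I = [0, \|q_k\alpha\|)$ is disjoint from $I$). (ii) Directly, $\{x \in I : R^{q_{k+1}}x \in I\} = [\|q_{k+1}\alpha\| - \|q_k\alpha\|, 0)$, so the complementary subinterval $I' = [-\|q_k\alpha\|, \|q_{k+1}\alpha\| - \|q_k\alpha\|)$, of length $\|q_{k+1}\alpha\|$, is the set of $x$ that cannot return at time $q_{k+1}$. (iii) For $x \in I'$ the shift $R^{q_{k+1}+q_k}x = x + \|q_k\alpha\| - \|q_{k+1}\alpha\|$ lands exactly in $[-\|q_{k+1}\alpha\|, 0) \subset I$, giving the claimed return location.

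Step (iv), the only delicate one, rules out intermediate returns at times $q_{k+1} < j < q_{k+1}+q_k$. For $x \in I'$ set $y = R^{q_{k+1}}x \in [-\|q_k\alpha\| - \|q_{k+1}\alpha\|, -\|q_k\alpha\|)$, a narrow arc of length $\|q_{k+1}\alpha\|$ sitting just to the left of $I$, and write $j = q_{k+1} + j'$ with $1 \le j' < q_k$. Best approximation gives $\|j'\alpha\| \ge \|q_{k-1}\alpha\|$, and Lemma~\ref{lemma:q:k:plus:1:ak:qk} together with the alternation of signs of $q_k\alpha - p_k$ yields the identity $\|q_{k-1}\alpha\| = a_{k+1}\|q_k\alpha\| + \|q_{k+1}\alpha\| \ge \|q_k\alpha\| + \|q_{k+1}\alpha\|$. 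A case split on the sign of $j'\alpha \bmod 1$, combined with the precise narrow location of $y$, then forces $R^{j'}y \notin I$. The $k$ odd case follows from the same argument after flipping all signs; the specific return subinterval in the statement then names the preimage rather than the image of the bad subinterval.

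The main obstacle is step (iv). In step (i) the shift magnitude $\|j\alpha\|$ already exceeds $|I|$, so that step is essentially immediate; but in (iv) the bound $\|j'\alpha\| \ge \|q_{k-1}\alpha\|$ is only $\|q_{k+1}\alpha\|$ larger than $|I|$ when $a_{k+1}=1$, so shift magnitude alone cannot exclude a return. One must combine this with the tight location of $y$ in an arc of length exactly $\|q_{k+1}\alpha\|$ abutting $I$, via the identity $\|q_{k-1}\alpha\| = a_{k+1}\|q_k\alpha\| + \|q_{k+1}\alpha\|$, and the half-open convention on $I$ is what handles the boundary in the tight case $a_{k+1} = 1$.
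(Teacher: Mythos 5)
Your proof is correct and follows the same direct sign computation as the paper's, but it is considerably more thorough: the paper's proof only checks that $R^{q_{k+1}}$ and $R^{q_{k+1}+q_k}$ carry the two complementary sub-arcs of $I$ back into $I$ and omits your steps (i) and (iv), which rule out earlier and intermediate first returns. You have also correctly noticed an inconsistency in the statement: the paper's own proof identifies the long-return set for $k$ even as the \emph{leftmost} sub-arc $[-\|q_k\alpha\|,\,-\|q_k\alpha\|+\|q_{k+1}\alpha\|)$ of $I$ (consistent with the $k$-odd clause, which names the domain), while the displayed interval $[-\|q_{k+1}\alpha\|,0)$ is the \emph{image} of that arc under $R^{q_{k+1}+q_k}$; the slip is harmless where the lemma is invoked in Claim~\ref{claim:hit:both}, which only needs a sub-arc of $I$ of length $\|q_{k+1}\alpha\|$.
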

\begin{proof}
First, we assume $k+1$ is odd. If $x\in I$ then $R^{q_{k+1}}x=x-\|q_{k+1}\alpha\|$. 
So if $x$ is not in the leftmost $\|q_{k+1}\|$ of $I$ then $R^{q_{k+1}}x\in I$. Otherwise, $R^{q_k}x=x+\|q_\alpha\|$ is on the right of $I$ and within $\|q_{k+1}\alpha|$ of $I$. So $R^{q_{k+1}+q_k}x\in I$. The case of $k+1$ even is similar.
\end{proof}
\begin{lemma}
\label{lemma:DK}(Denjoy-Koksma) 
If $f$ is bounded variation then $|\sum_{i=0}^{q_k-1}f(R^ix)-q_k\int f \,
d\lambda|\leq var(f)$.
\end{lemma}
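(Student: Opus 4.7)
My plan uses the classical three-distance structure of the rotation orbit at continued-fraction denominators. The $q_k$ orbit points $\{R^i x\}_{i=0}^{q_k-1}$ partition $[0,1)$ into $q_k$ intervals $J_0, \ldots, J_{q_k-1}$ (ordered left-to-right). By Lemmas~\ref{lemma:orbit:sep} and~\ref{lemma:orbit:dense} (which together give the three-distance theorem at level $q_k$), each interval length lies in $\{\|q_{k-1}\alpha\|,\, \|q_{k-1}\alpha\|+\|q_k\alpha\|\}$, in particular at most $2\|q_{k-1}\alpha\|$, and by Lemma~\ref{lemma:good:bound} each quantity $q_k|J_j|$ is $O(1)$. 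Each orbit point serves as the left endpoint of exactly one $J_j$, inducing a bijection $i \leftrightarrow j$ so that, denoting by $y_j$ the orbit point that is the left endpoint of $J_j$, we have
$$\sum_{i=0}^{q_k-1} f(R^i x) = \sum_{j=0}^{q_k-1} f(y_j).$$

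I would then compare $\sum_j f(y_j)$ with $q_k\int f = q_k \sum_j \int_{J_j} f\, d\lambda$. On each $J_j$ the oscillation of $f$ is at most $\mathrm{var}_{J_j}(f)$, and summing these oscillations gives a total bounded by $V(f)$. The key identity $q_k\|q_{k-1}\alpha\| + q_{k-1}\|q_k\alpha\| = 1$ (equivalently $p_kq_{k-1} - p_{k-1}q_k = \pm 1$), combined with the two-length constraint $N_a a + N_b b = 1$, $N_a + N_b = q_k$, allows one to rearrange and cancel the pointwise errors so the total contribution is at most $V(f)$. One can equivalently set this up inductively on $k$, using Lemma~\ref{lemma:q:k:plus:1:ak:qk} ($q_{k+1} = a_{k+1}q_k + q_{k-1}$) to reduce the estimate at level $q_k$ to the estimate at level $q_{k-1}$, exploiting the renormalization structure where $R^{q_k}$ acts as translation by $\pm\|q_k\alpha\|$.

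The main obstacle is reconciling the two natural comparisons: a direct Riemann-sum estimate gives the \emph{weighted} bound $|\sum_j f(y_j)|J_j| - \int f|\le \max_j|J_j|\cdot V(f)$, while the lemma requires the \emph{unweighted} statement $|\sum_j f(y_j) - q_k\int f|\le V(f)$. Bridging the two requires the precise combinatorial pattern by which intervals of the two distinct lengths are arranged around the circle by $R$, which is the substantive content of the three-distance theorem in this refined form. The resulting proof is classical (see, e.g., Khinchin~\cite{Khinchin:book}).
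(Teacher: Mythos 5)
Your proposal is heading toward the classical proof of Denjoy--Koksma, which is genuinely different from what the paper does, but as written it has a real gap that you yourself flag and then do not close. The paper proves only the special case $f=\chi_J$ with an interval $J$ (which is all that is used later), and it does so by a geometric argument: interpret $\sum_{i<q_k}\chi_J(R^ix)$ as the number of intersections between a vertical segment of length $q_k$ and a horizontal slit on the torus $\hat X$, renormalize by $g_{\log q_k}$, close up both curves to closed loops $\hat\gamma_1,\hat\gamma_2$, and use the fact that the algebraic intersection number of two closed curves is a topological invariant. The whole inequality then reduces to counting intersections with the short closing arcs $\zeta_1,\zeta_2,\zeta_2'$. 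Your route (Riemann sums over a combinatorial partition of the circle) would, if completed, give the full bounded-variation statement, and is the standard textbook proof; so the two approaches are not the same.

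The gap is exactly where you say it is: comparing $\sum_j f(y_j)$ with $q_k\int f$ requires a partition into cells of \emph{equal} length $1/q_k$, one orbit point per cell, and the three-distance intervals $J_j$ do not have this property (their lengths $q_k|J_j|$ are not equal to $1$). Writing out the error over the $J_j$ leaves you with terms $f(y_j)\,(1-q_k|J_j|)$ that do not sum to something bounded by $\mathrm{var}(f)$ via any simple rearrangement; the identity $q_k\|q_{k-1}\alpha\|+q_{k-1}\|q_k\alpha\|=1$ is true but is not by itself enough. The correct fix abandons the three-distance intervals entirely. Instead compare $R^ix=x+i\alpha$ with the equally spaced points $\xi_i=x+ip_k/q_k$. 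For $0\le i<q_k$ one has $R^ix-\xi_i = i(\alpha-p_k/q_k)$, which is monotone in $i$, has a fixed sign, and satisfies $|R^ix-\xi_i|<\|q_k\alpha\|<1/q_k$. Therefore the partition of the circle into the $q_k$ arcs $[\xi_i,\xi_i+1/q_k)$ (or $(\xi_i-1/q_k,\xi_i]$, depending on the sign of $\alpha-p_k/q_k$) has cells of length exactly $1/q_k$ and each cell contains exactly one orbit point, namely $R^ix$. With that partition $\{P_i\}$ the computation is one line: $\bigl|\sum_i f(R^ix)-q_k\int f\bigr|=\bigl|\sum_i q_k\int_{P_i}(f(R^ix)-f(t))\,dt\bigr|\le\sum_i \mathrm{var}_{P_i}(f)=\mathrm{var}(f)$. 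You should replace the three-distance discussion with this equal-length-cell construction; without it the ``rearrange and cancel'' sentence is only a gesture, not a proof.
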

Note if $f$ is the characteristic function of an interval
$var(f)=2$. This is the only case we use in the sequel and we present
the proof of this case below. A similar argument will be used
to prove the more general Lemma~\ref{lemma:happy:times}.

\begin{proof}[Proof of special case]
Following the paragraph in the introduction `Connection to tori and tori with marked points' 
we want to understand the intersections of a (half open) vertical line
segment of length $q_k$  to a horizontal line segment of length $z$ 
on $\hat{X}$,  see Figure~\ref{fig:original}.
(Indeed, $R^{q_k}$ is given by a vertical trajectory of length $q_k$ and $\sum_{j=0}^{q_k-1}\chi_J(R^jx)$ is given by the intersection of the corresponding vertical trajectory of length $q_k$ with a horizontal trajectory of length $z$.)  This is equivalent to 
understanding the intersections of a vertical segment of length $1$ to
a horizontal line segment of length $q_kz$ on
$g_{\log(q_k)}\hat{X}$. Call these segments $\gamma_1$ and $\gamma_2$
respectively, see Figure~\ref{fig:renorm1}. 
We close up these two curves as pictured in Figure~\ref{fig:renorm2}
using the following observations:\\
\noindent
\begin{enumerate}
\item Any vertical trajectory of length $q_k$ on $\hat{X}$ has that its endpoints differ by a horizontal vector of length at most $\|q_k\alpha\|<\frac 1{a_{k+1}q_k}.$ This implies we can 
close $\gamma_1$ up by a horizontal segment, $\zeta_1$, of length less than $\frac 1 {a_{k+1}}\leq1$. Call the resulting closed curve $\hat{\gamma}_1$. 

\item We may close up $\gamma_2$ by a vertical segment $\zeta_2'$ of
  length at most $1$, union a horizontal segment $\zeta_2$ of length at most $\frac 1 2 $ which is either contained in $\gamma_2$ or disjoint from it. Call the resulting closed curve $\hat{\gamma}_2$. 
\end{enumerate}

Any vertical segment of length 1 on $g_{\log(q_k)}\hat{X}$ is a translate of $\gamma_1$ and so we may close it up so that it is a translate of $\hat{\gamma}_1$. The intersection of any translate of $\hat{\gamma}_1$ with $\hat{\gamma}_2$ is constant (it is a topological invariant of these curves).  So now we study the intersection of translates of $\hat{\gamma}_1$ and $\zeta_2\cup \zeta_2'$. $\gamma_1$ can intersect $\zeta_2$ either $0$ and $1$ times.   $\gamma_1$ does not intersect $\zeta_2'$ and $\zeta_1$ does not intersect $\zeta_2$. Once again  $\zeta_1$ intersects $\zeta_2'$ at most once. To summarize the intersections with $\gamma_2$ of any two translates of $\gamma_1$  differ by at most $2$.
 \end{proof}

\begin{figure}[ht]
    \centering
    \includegraphics[width=0.3\textwidth]{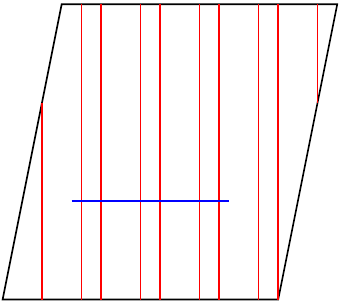}
    \caption{The torus $\hat{X}$. A vertical segment of length $q_k$
      intersects a horizontal slit of length $z$.}
    \label{fig:original}
\end{figure} 

\begin{figure}[ht]
    \centering
    \includegraphics[width=0.3\textwidth]{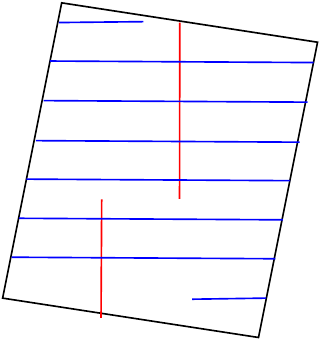}
    \caption{The torus $g_{\log(q_k)} \hat{X}$: A vertical segment
      $\gamma_1$ of length $1$ (drawn in red) intersects a horizontal
      slit $\gamma_2$ of length $q^k z$ (drawn in blue).}
    \label{fig:renorm1}
\end{figure} 

\begin{figure}[ht]
    \centering
    \includegraphics[width=0.3\textwidth]{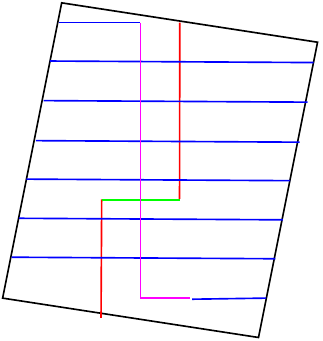}
    \caption{Closing the curves. We complete the vertical segment 
     $\gamma_1$ to a closed
      curve $\hat{\gamma_1}$ by adding a horizontal segment $\zeta_1$
      (drawn in green). Simularly, we close up the horizontal slit
      $\gamma_2$ to obtain a closed curve $\hat{\gamma}_2$ 
      by adding in a horizontal segment $\zeta_2$ and a
      vertical segment $\zeta_2'$ (drawn in purple).}
    \label{fig:renorm2}
\end{figure}

\begin{lemma} 
\label{lemma:happy:times} 
For all $k\in \mathbb{N}$ with $a_{k+1}>4$, and $i \in \mathbb{N}$ with $i \leq \lfloor \frac {a_{k+1}}{4}\rfloor$ we have that there exists $j$ with $\lambda(\psi_{iq_k}^{-1}(j))>\frac 1 {12}$ and either $j-\min\{\ell: \psi_{jq_k}^{-1}(\ell) \neq \emptyset\}\leq 1$ or $\max\{\ell:\psi_{jq_k}^{-1}(\ell)\neq \emptyset\}-j\leq 1$.  Moreover $\psi_{iq_k}$ is at most $i+2$ valued.
\end{lemma}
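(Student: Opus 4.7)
The plan is to adapt the geometric argument proving the special case of Lemma~\ref{lemma:DK} to vertical trajectories of length $iq_k$ rather than $q_k$. On the renormalized torus $g_{\log q_k}\hat X$, the length-$iq_k$ vertical trajectory on $\hat X$ corresponds to a vertical segment $\gamma_1$ of scaled length $i$, meeting a horizontal slit $\gamma_2$ of scaled length $q_kz$. By Lemma~\ref{lemma:good:bound} and the hypotheses $a_{k+1}>4$, $i\leq\lfloor a_{k+1}/4\rfloor$, the scaled horizontal displacement of $\gamma_1$'s endpoints is at most $iq_k\|q_k\alpha\|<i/a_{k+1}\leq 1/4$, so I would close $\gamma_1$ by a horizontal segment $\zeta_1$ of scaled length $<1/4$ and close $\gamma_2$ by $\zeta_2,\zeta_2'$ exactly as in the proof of Lemma~\ref{lemma:DK}.

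For the bound $i+2$ on the number of values, topological invariance gives $\#(\hat\gamma_1(x)\cap\hat\gamma_2)=:N_0$ constant in $x$, and expanding yields $\psi_{iq_k}(x)=\#(\gamma_1(x)\cap\gamma_2)=N_0-\#(\gamma_1\cap\zeta_2)-\#(\zeta_1\cap\zeta_2')$, as the remaining pairs vanish generically by parallelism. The term $\#(\zeta_1\cap\zeta_2')$ lies in $\{0,1\}$. For $\#(\gamma_1\cap\zeta_2)$, I would decompose $\gamma_1$ into $i$ consecutive vertical blocks of length $q_k$; the argument from Lemma~\ref{lemma:DK} (using that the unscaled length $|\zeta_2|\leq 1/(2q_k)<\|q_{k-1}\alpha\|$, the minimum gap of the $q_k$-orbit) shows each block contributes $0$ or $1$, so $\#(\gamma_1\cap\zeta_2)\in\{0,1,\ldots,i\}$. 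The correction thus takes at most $i+2$ distinct values, and so does $\psi_{iq_k}$.

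For the popular near-extreme value, consider $B=\{y:\sum_{j=0}^{q_k-1}\chi_{\zeta_2}(R^jy)=1\}=\bigsqcup_{j=0}^{q_k-1}(\zeta_2-j\alpha)$ (disjoint since $|\zeta_2|\leq 1/(2q_k)<\|q_{k-1}\alpha\|$), with $\mu:=\lambda(B)=q_k|\zeta_2|\leq 1/2$. Then $\#(\gamma_1(x)\cap\zeta_2)=\#\{m\in[0,i):R^{mq_k}x\in B\}$, so $\{\#(\gamma_1\cap\zeta_2)=0\}$ is the complement of $\bigcup_{m=0}^{i-1}(B-mq_k\alpha)$. Since these $i$ translates spread each of the $q_k$ components of $B$ by a total of at most $(i-1)\|q_k\alpha\|$, the union has measure at most $\mu+q_k(i-1)\|q_k\alpha\|\leq\mu+(i-1)/a_{k+1}\leq\mu+1/4$, giving $\lambda(\{\#(\gamma_1\cap\zeta_2)=0\})\geq 1-\mu-1/4\geq 1/4$. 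Splitting this set according to $\#(\zeta_1\cap\zeta_2')\in\{0,1\}$ embeds it in $\{\psi_{iq_k}=N_0\}\cup\{\psi_{iq_k}=N_0-1\}$, so one of these two values---each within distance $1$ of $\max\psi_{iq_k}$---has measure at least $1/8>1/12$.

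The hardest part will be the bookkeeping: carefully matching the geometric intersection counts on the renormalized torus to the arithmetic Birkhoff sums, verifying that the three-distance-theorem gap estimates apply at the relevant scales given the diophantine bound $a_{k+1}>4$, and tracking signs and orientations to confirm the correction formula $\psi_{iq_k}=N_0-\#(\gamma_1\cap\zeta_2)-\#(\zeta_1\cap\zeta_2')$ (including the edge case where $N_0$ itself is not attained, so $j_{\max}=N_0-1$).
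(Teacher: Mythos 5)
Your proof is correct and follows essentially the same renormalization argument as the paper's: close up $\gamma_1$ and $\gamma_2$, invoke topological invariance of the intersection number, bound the correction terms $\#(\gamma_1\cap\zeta_2)\in\{0,\ldots,i\}$ and $\#(\zeta_1\cap\zeta_2')\in\{0,1\}$ to get $i+2$-valuedness, then estimate the measure of the set on which $\gamma_1$ misses $\zeta_2$. The only cosmetic difference is that the paper produces an explicit length-$\tfrac14$ subinterval of each horizontal line on which all $i$ wraps miss $\zeta_2$, whereas you estimate $\lambda\bigl(\bigcup_{m<i}(B-mq_k\alpha)\bigr)\le\mu+q_k(i-1)\|q_k\alpha\|$ directly; the sign ambiguity you flag at the end (whether $\zeta_2\subset\gamma_2$ or is disjoint from it, which determines whether the popular value is near the max or the min) is exactly why the lemma's conclusion is stated as an ``either/or.''
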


\begin{proof}
This is similar to the proof of Lemma~\ref{lemma:DK}, 
but the vertical segment on $\hat{X}$ has
length $iq_k$. We once again work on $g_{\log(q_k)}\hat{X}$, where
the vertical segment $\gamma_1$ has length $i$, and the slit
$\gamma_2$ has length $q_k z$ (See Figure~\ref{fig:renorm2}). Thus, we
need to estimate the number of intersections between $\gamma_1$ and
$\gamma_2$.  As in the proof of Lemma~\ref{lemma:DK}, we make the
following observations (see Figure~\ref{fig:renorm2}):
\begin{enumerate}
\item Any vertical trajectory of length $iq_k$ on $\hat{X}$ has that its endpoints differ by a horizontal vector of length at most $i\|q_k\alpha\|<\frac i{a_{k+1}q_k}.$ This implies we can 
close $\gamma_1$ up by a horizontal segment, $\zeta_1$, of length less than $\frac i {a_{k+1}}<\frac 1 4 $. Call the resulting closed curve $\hat{\gamma}_1$. 

\item
We may close up $\gamma_2$ by a vertical segment of length at most $1$, $\zeta_2'$, union a horizontal segment of length at most $\frac 1 2$, $\zeta_2$, which is either contained in $\gamma_2$ or disjoint from it. Call the resulting closed curve $\hat{\gamma}_2$. 
\end{enumerate}

Any vertical segment of length $i$ on $g_{\log(q_k)}\hat{X}$ is a
translate of $\gamma_1$ and so we may close it up so that it is a
translate of $\hat{\gamma}_1$. As in the proof of
Lemma~\ref{lemma:DK}, the intersection of any translate of $\hat{\gamma}_1$ with $\hat{\gamma}_2$ is constant (it is a topological invariant of these curves).  So now we study the intersection of translates of $\hat{\gamma}_1$ and $\zeta_2\cup \zeta_2'$. $\gamma_1$ can intersect $\zeta_2$ between $0$ and $i$ times.   $\gamma_1$ does not intersect $\zeta_2'$ and $\zeta_1$ does not intersect $\zeta_2$. Also $\zeta_1$ intersects $\zeta_2'$ at most once. To summarize the intersections with $\gamma_2$ of any two translates of $\gamma_1$  differ by at most $i+1$.

Observe that on every horizontal line, a segment of at least $\frac{1}{4}$  has that all the corresponding translates of $\hat{\gamma}_1$ for this line segment intersect $\zeta_2$ or all of them do not. (Indeed there is a segment of size at least $\frac 1 2$ so that a vertical segment of length 1 from any point on this segment misses $\zeta_2$ and $\{j\|q_k\alpha\|:0\leq j\leq i\}$ is contained in an interval of length at most $\frac 1 4 $. That is, there is a subinterval of size $\frac 1 4$ so that for each $x$ in this subinterval we have that $j\|q_k\alpha\|+x$ is in the subinterval of size $\frac 1 4 $ for all $0\leq j\leq i$.) So on a subset of this set of measure at least $\frac 1 8$ the translates of $\hat{\gamma}_2$ either all intersect $\zeta_2'$ or all miss $\zeta_2'$. This set satisfies the lemma and it is either within one of the maximal 
or within $1$ of the minimal. 
\end{proof}
 
\begin{lemma}\label{lem:hit interval}  If $\hat{I}$ is an interval of size at least $\gamma \|q_k\alpha\|$ and $q_L>12 \gamma^{-1}\|q_k\alpha\|$ then for all $x$ we have $\frac 1 {q_L}\sum_{j=0}^{q_L-1}\chi_{\hat{I}}(R^j x)\in [\frac 1 2 \lambda(\hat{I}),2 \lambda(\hat{I})]$. Also for all $\gamma>0$ there exists $u$ so that if $\hat{I}$ is an interval of size at least $\gamma \|q_k\alpha\|$ then $\frac 1 {q_L}\sum_{j=0}^{q_L-1}\chi_{\hat{I}}(R^j x)\in [\frac 1 2 \lambda(\hat{I}),2 \lambda(\hat{I})]$ for all $L>k+u$ and  if $t>q_{k+u}$ we have $\frac 1 t \sum_{j=0}^{t-1}\chi_{\hat{I}}(R^jx)\geq \frac 1 4 \lambda(\hat{I})$.
\end{lemma}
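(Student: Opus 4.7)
The plan is to extract all three claims from Denjoy--Koksma (Lemma \ref{lemma:DK}) together with the Fibonacci-type growth of the continued-fraction denominators $q_k$. Nothing subtle happens: the whole lemma repackages the fact that the characteristic function of an interval has total variation $2$, so the $q_L$-th Birkhoff sum has discrepancy at most $2$.

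For the first claim I apply Lemma \ref{lemma:DK} with $f=\chi_{\hat I}$ to obtain, for every $x$,
\[
\left|\frac{1}{q_L}\sum_{j=0}^{q_L-1}\chi_{\hat I}(R^j x)-\lambda(\hat I)\right|\leq \frac{2}{q_L}.
\]
Reading the size hypothesis on $q_L$ in the form needed to make the Birkhoff average resolve an interval of length $\gamma\|q_k\alpha\|$ (i.e.\ $q_L\,\gamma\|q_k\alpha\|\gtrsim 4$, so that the error $2/q_L$ is at most $\tfrac12\lambda(\hat I)$), the average lies in $[\tfrac12\lambda(\hat I),\tfrac32\lambda(\hat I)]\subseteq[\tfrac12\lambda(\hat I),2\lambda(\hat I)]$.

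For the second claim I choose $u=u(\gamma)$ so that $q_{k+u}$ always exceeds the threshold from the first claim. By Lemma \ref{lemma:good:bound}, $\|q_k\alpha\|^{-1}\leq q_{k+1}+q_k\leq 2q_{k+1}$, so it suffices to guarantee $q_{k+u}/q_{k+1}\geq 24\gamma^{-1}$. The recurrence $q_{n+1}=a_{n+1}q_n+q_{n-1}\geq q_n+q_{n-1}$ gives a Fibonacci-type lower bound on $q_{k+u}/q_{k+1}$ which is uniform in $k$ and tends to $\infty$ with $u$, so the desired $u$ depends only on $\gamma$. For any $L>k+u$, the first claim then applies verbatim.

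For the third claim, given $t>q_{k+u}$, I write $t=m q_{k+u}+s$ with $m\geq 1$ and $0\leq s<q_{k+u}$, and estimate
\[
\sum_{j=0}^{t-1}\chi_{\hat I}(R^j x)\;\geq\;\sum_{i=0}^{m-1}\sum_{j=0}^{q_{k+u}-1}\chi_{\hat I}\bigl(R^j(R^{i q_{k+u}}x)\bigr).
\]
Applying the second claim to each base point $R^{i q_{k+u}}x$ bounds the inner sum below by $\tfrac12 q_{k+u}\lambda(\hat I)$, so the full sum is at least $\tfrac12 m q_{k+u}\lambda(\hat I)$. Since $t\leq (m+1)q_{k+u}\leq 2m q_{k+u}$, division yields the lower bound $\tfrac14\lambda(\hat I)$. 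The only point one has to check carefully is that the $u$ in the second claim can be chosen independently of $k$, but this is immediate from the uniformity of the recurrence $q_{n+1}\geq q_n+q_{n-1}$ in $n$; no part of the argument presents a real obstacle.
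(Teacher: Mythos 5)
Your proof is correct and follows essentially the same route as the paper: Denjoy--Koksma gives a discrepancy of at most $2$, which is absorbed once $q_L\lambda(\hat I)>4$; the Fibonacci-type growth $q_{n+2}>2q_n$ shows that $u$ can be chosen depending only on $\gamma$; and the third claim follows by splitting $[0,t)$ into blocks of length $q_{k+u}$ and applying the second claim blockwise, using $t<2\lfloor t/q_{k+u}\rfloor q_{k+u}$. (You also implicitly, and correctly, read the hypothesis $q_L>12\gamma^{-1}\|q_k\alpha\|$ as the intended $q_L>12\gamma^{-1}\|q_k\alpha\|^{-1}$, which is what the paper's own proof actually uses.)
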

This follows by Lemma~\ref{lemma:DK}. Indeed $\sum_{j=0}^{q_L-1}\chi_{\hat{I}}(R^j x)\in  [q_L\lambda(\hat{I})-2,q_L\lambda(\hat{I})+2]$ so if the lemma follows if $q_L\lambda(\hat{I})>4$. Since $\|q_k\alpha\|<\frac 1 {3q_{k+1}}$ this is the case. To see the second claim first notice that $q_{k+2}=q_{k+1}+q_k>2q_k$. So if $2^b>12\gamma^{-1}$ then $q_{L}>12\gamma^{-1}\|q_k\alpha\|$ whenever $L\geq k+2b$, and the second claim follows from the first with $u=2b$. To see the last claim, notice 
$\sum_{j=0}^{t-1}\chi_{\hat{I}}(R^jx) \geq \sum_{i=0}^{\lfloor \frac t {q_{k+u}}\rfloor-1} \sum_{j=0}^{q_{k+u}-1}\chi_{\hat{I}}(R^{j}R^{iq_{k+u}}x)$ and apply the previous sentence to obtain that this is at least $\lfloor \frac t {q_{k+u}}\rfloor q_{k+u}\frac 1 2 \lambda(\hat{I})$. Since $t<2 \lfloor \frac t {q_{k+u}}\rfloor q_{k+u}$ we have the final claim.
\begin{lemma}\label{lem:sum bound} $\underset{k \to \infty}{\lim}\frac{\sum_{i=1}^ka_i}{q_k}=0.$
\end{lemma}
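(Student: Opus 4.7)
The plan is to use the recurrence $q_{i} = a_i q_{i-1} + q_{i-2}$ to bound each $a_i$ in terms of the denominators, and then exploit the (at least) exponential growth of $q_k$ to show the normalized sum vanishes.

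First I would observe that since $q_i = a_i q_{i-1} + q_{i-2} \ge a_i q_{i-1}$, we have
\[
a_i \;\le\; \frac{q_i}{q_{i-1}}.
\]
Next I would record the standard exponential lower bound on the growth of the $q_j$: since $q_{j+1} \ge q_j + q_{j-1} \ge 2 q_{j-1}$, iterating gives
\[
q_k \;\ge\; 2^{\lfloor (k-i)/2\rfloor}\, q_i \qquad \text{for all } 1 \le i \le k.
\]
Combining the two displays yields
\[
\frac{a_i}{q_k} \;\le\; \frac{q_i}{q_{i-1}\, q_k} \;\le\; \frac{1}{q_{i-1}\cdot 2^{\lfloor (k-i)/2\rfloor}}.
\]

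The key step is then to split the sum at the midpoint $i = \lfloor k/2 \rfloor$ and estimate the two halves separately. For $i \le k/2$ the factor $2^{\lfloor(k-i)/2\rfloor} \ge 2^{k/4}$ dominates, so the contribution of this range is at most $(k/2) \cdot 2^{-k/4}$, which tends to $0$. For $i > k/2$ we use instead $q_{i-1} \ge q_{\lfloor k/2\rfloor -1}$, which grows exponentially in $k$, so this contribution is at most $(k/2)/q_{\lfloor k/2\rfloor -1}$, also tending to $0$. Adding the two bounds gives $\sum_{i=1}^k a_i / q_k \to 0$, proving the lemma.

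I do not expect any substantive obstacle: the argument is purely a consequence of the Fibonacci-type growth rate $q_{j+2} \ge 2 q_j$ combined with the bound $a_i \le q_i/q_{i-1}$. The only mildly delicate point is choosing the split point so that both halves decay, which the midpoint handles uniformly.
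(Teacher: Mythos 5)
Your proof is correct, and it follows a genuinely different route from the paper. The paper's proof (which is really a one-line sketch) establishes by induction the inequality $q_k > 2^{j/2}\prod_{i=1}^k a_i$ where $j$ counts consecutive pairs $(a_i,a_{i-1})=(1,1)$, and leaves to the reader the additional step of comparing $\sum a_i$ with $\prod a_i$ and handling the case distinction needed to conclude (roughly: either there are many large $a_i$, so $\prod a_i$ absorbs $\sum a_i$ with room to spare, or there are many ones, so $j$ grows and the factor $2^{j/2}$ provides the decay). Your argument bypasses the product entirely: you use the elementary bound $a_i \le q_i/q_{i-1}$ together with the uniform growth estimate $q_k \ge 2^{\lfloor(k-i)/2\rfloor}q_i$, and split the sum at $i=\lfloor k/2\rfloor$, bounding the early range by the exponential factor in $k-i$ and the late range by the exponential size of $q_{i-1}$. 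This is cleaner, more self-contained, and avoids the combinatorial bookkeeping about consecutive pairs of ones; the paper's version makes the role of the product $\prod a_i$ (used elsewhere in continued-fraction arguments) more visible, but at the cost of an implicit case analysis. Both approaches ultimately rest on the same elementary fact $q_{j+2}\ge 2q_j$.
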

\begin{proof} By Lemma \ref{lemma:q:k:plus:1:ak:qk} we have that $q_\ell>a_{\ell}q_{\ell-1}$ and $q_{\ell+2}>2q_\ell.$ So by induction we have that $q^k>2^{\frac{j}2}\prod_{i=1}^{k}a_i$ where $j=|\{i<k:(a_i,a_{i-1})=(1,1)\}|$. 
\end{proof}
\section{Applying the Criterion}
\label{sec:work}
In this section we show that (A0)-(A9) imply the assumptions of Proposition \ref{prop:fact:qfact}. Proposition \ref{prop:technical:cond3} connects between rotations and powers of 3-IETs. Lemma \ref{lemma:first:sum} is an intermediate step in showing that (A0)-(A9) imply the assumptions of Proposition \ref{prop:technical:cond3} and Section \ref{sec:subsec:proof:of:prop:technical} completes the argument.

In view of Proposition~\ref{prop:fact:qfact} and
Corollary~\ref{cor:trivial:joining}, to prove
Theorem~\ref{theorem:disjoint:powers} it is enough to prove the
following:
\begin{proposition}
\label{prop:satisfy:2a:2b:2c}
Suppose assumptions (A0)-(A9) are satisfied. Then, there exists a
constant $\epsilon_* > 0$ (depending only on the constants in (A0)-(A9))
such that the following holds: Suppose $n \in \natls$, $m'<m<n$, and 
\begin{equation}\label{eq:sublacunary}
\left|\frac{m'}{m} - 1\right| \le \epsilon_*. 
\end{equation}
Then the assumptions of Proposition~\ref{prop:fact:qfact} can be
satisfied for $X_1 = X_2 = J$, 
$T_1 = S^n$ and $T_2$ either $S^m$ or $S^{m'}$. (Note
that we view $T_1$ and $T_2$ as maps from $J$ to $J$). 
\end{proposition}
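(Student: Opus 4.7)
The plan is to verify the three hypotheses of Proposition~\ref{prop:fact:qfact} for $T_1 = S^n$ and $T_2 \in \{S^m, S^{m'}\}$ by making explicit choices of $r_i$, the partition $U_j^{(i)}$, the set $A_i$, the function $F_i$, and the offset $\ell$. The guiding principle is that $S$ is the first return of the rotation $R$ to $J$: if $x \in J$ and $\psi_{L_i}(x) = N$, then $S^N(x) = R^{L_i}(x)$, so by (A6) the Birkhoff count $\psi_{L_i}$ tells us which power of $S$ approximates the identity at $x$. By (A8) this count takes only finitely many (at most $k_c$) values on $J$, and by (A5) it is overwhelmingly concentrated on a single value $u_i$, with a tail of mass $\le \eta \hat{c}_\eta$ on one side.

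For condition (1), I would choose $r_i \in \natls$ with $n r_i = u_i$ (possible by perturbing $r_i$ by a bounded amount; the rough scale is $r_i \sim L_i |J|/n$). Setting
\begin{displaymath}
U_j^{(i)} = \{x \in J : \psi_{L_i}(x) - n r_i = -j\},
\end{displaymath}
one has, for $x \in U_j^{(i)}$, $S^{n r_i}(x) = S^j R^{L_i}(x)$, which by (A6) is $L^1$-close to $S^j(x)$ on the complement of a set where $S^j$ is discontinuous (of vanishing measure). Condition (3) is then immediate from the concentration in (A5): the cell $U_0^{(i)}$ has mass at least $\hat{c}_\eta$, while whichever tail $\bigcup_{\ell<0} U_\ell^{(i)}$ or $\bigcup_{\ell>0} U_\ell^{(i)}$ is suppressed by (A5) has mass at most $\eta \hat{c}_\eta$. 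Taking $\eta$ small relative to $\delta$ yields (\ref{eq:a:is:good}) or (\ref{eq:a:prime:is:good}) with $a = 0$ or $a' = 0$ respectively, depending on which alternative in (A5) holds.

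The main obstacle is condition (2): building a measure-preserving $F_i$ that is $L^1$-close to the identity and satisfies $S^{-\ell} S^{m r_i} y \approx S^{m r_i} F_i y$ for some $\ell \in \{1,2,3\}$. The plan is to set $F_i(y) = R^{L_i''}(y)$ where $L_i''$ differs from $L_i$ by a bounded integer combination of $q_{v_i}, q_{v_i-1}$ (so $R^{L_i''} \approx \id$ by Lemma~\ref{lemma:good:bound} and (A6)), with $L_i''$ chosen so that on a large set $A_i$ the first-return count $\psi_{L_i''}$ differs from $\psi_{L_i}$ by a prescribed integer. The net effect after applying $S^{m r_i}$ to $y$ and to $F_i(y)$ is a shift by a single power of $S$; this power is determined by $m r_i$ and the offset built into $L_i''$. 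The freedom of choosing between $T_2 = S^m$ and $T_2 = S^{m'}$, combined with the sublacunary hypothesis $|m'/m - 1| \le \epsilon_*$, provides exactly the arithmetic flexibility needed to force the resulting offset into $\{1,2,3\}$. The set $A_i$ is the subset of $J$ on which both $\psi_{L_i}$ and $\psi_{L_i''}$ attain their dominant values, which by (A5) and (A8) has mass close to $\hat{c}_\eta$. This technical construction will be encapsulated in Proposition~\ref{prop:technical:cond3}; the key measure-theoretic input, controlling that the shifted Birkhoff count stays concentrated, is Lemma~\ref{lemma:first:sum}, which rests on the Denjoy-Koksma estimate (Lemma~\ref{lemma:DK}) and the refinement Lemma~\ref{lemma:happy:times}, together with the diophantine hypotheses (A0)--(A4), (A7), (A9).
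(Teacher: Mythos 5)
Your overall framing is right at a high level: use the Birkhoff counting function $\psi$ to translate powers of $S$ into rotation times via $S^{\psi_M(x)} x = R^M x$, take the partition $U_j^{(i)}$ to be level sets of $\psi_{L_i}$ (as the paper does in verifying condition (1)), invoke (A6) and (A8) to close up condition (1), and defer the hard combinatorics of condition (2b) to Proposition~\ref{prop:technical:cond3} resting on Lemma~\ref{lemma:first:sum}. But there are two concrete problems.

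The more serious one is your treatment of condition (3), which you describe as ``immediate from the concentration in (A5).'' It is not. Condition (3) asks for a lower bound on $\sigma(\{(x,y):x\in U_{a}^{(i)},\ y \in A_i\})$ for an \emph{arbitrary ergodic joining} $\sigma$ of $S^n$ and $S^{\hat m}$. Assumption (A5) only controls the $\lambda$-masses $\lambda(U_a^{(i)})$ and (via the corollary to Lemma~\ref{lemma:first:sum}) $\lambda(A_i)$; neither is close to $1$, and a joining need not resemble the product measure, so $\sigma(U_a^{(i)}\times A_i)$ can a priori be as small as $\max(0,\lambda(U_a^{(i)})+\lambda(A_i)-1)$, which may be zero. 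This is exactly the gap the paper's proof closes with considerable effort: Lemma~\ref{lemma:strectch:in} produces a Rokhlin-type tower inside $\psi_L^{-1}(a)$ of height $q_{\ell-b}$, the equidistribution estimate in Proposition~\ref{prop:technical:cond3}(2) shows that $S^{\hat m}$-orbits of length $\gtrsim q_{k+u}/\hat m$ spend a definite proportion of time in $\tilde A$, and the $S^n\times S^{\hat m}$-invariance of $\sigma$ is then used to average along these orbits and extract a positive $\sigma$-mass on $\psi_L^{-1}(a)\times\tilde A$. Assumption (A7), via (\ref{eq:L:prime:big}), is needed precisely to make the tower tall enough for this averaging to kick in. None of this is captured by ``immediate.''

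The second issue is your choice of $F_i$. You propose $F_i = R^{L_i''}$ with $L_i''$ a perturbation of $L_i$ by a bounded combination of $q_{v_i}, q_{v_i-1}$. The paper instead takes $F_k$ to be the first return map of $R^{q_k}$ to $J$; this is forced by the structure of Proposition~\ref{prop:technical:cond3}, whose key identity (\ref{eq:Rqk:S:hat:m:rk}), $R^{q_k}S^{\hat m r_k}y = S^d S^{\hat m r_k} R^{q_k} y$, is phrased in terms of $R^{q_k}$, and whose derivation from Lemma~\ref{lemma:first:sum} critically exploits the Denjoy--Koksma bound for $\psi_{q_k}$ and the $\|q_{k-1}\alpha\|$-separation of $R$-orbits of length $q_k$. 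Operating at the scale $L_i''\approx L_i$ (which is of order $q_{k+\ell}\gg q_k$ by (A7)) destroys both of these inputs, so the rest of your plan does not feed into the paper's Lemma~\ref{lemma:first:sum} as you intend. You would need to redo the entire \S4 analysis at the $L_i$ scale, and there is no reason the resulting offset would land in $\{-3,\dots,3\}\setminus\{0\}$.
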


\bold{Notation.} Before starting the proof of Proposition~\ref{prop:satisfy:2a:2b:2c} we introduce some
notation.  Let 
\begin{displaymath}
\psi_M(x) = \sum_{\ell=0}^{M -1} \chi_J(R^\ell x).
\end{displaymath}
Then, for any $x \in J$ so that $R^Mx\in J$,
\begin{equation}
\label{eq:time:change:general}
S^{\psi_M(x)} x = R^M x.
\end{equation}

\bold{Picking parameters.} Let 
\begin{equation}
\label{eq:def:c}
c = \frac{m - m'}{n}. 
\end{equation}
Since we have $0 < m' < m \le n$, note that
\begin{displaymath}
\frac{m}{m'} = 1 + \frac{c n}{m'} \ge 1 + c.
\end{displaymath}
Thus, in order to prove Theorem~\ref{theorem:disjoint:powers}, we may
assume that $c$ is small. 

Let $k$, $L$ be such that assumptions
(A0)-(A9) are satisfied.
Let $r_k= \lfloor \frac{\lambda(J)L}n \rfloor$. 
Let
\begin{displaymath}
w_k = \lfloor \frac{r_k}{\lambda(J)}\rfloor. 
\end{displaymath}
Then, 
\begin{displaymath}
(m-m') w_k = \frac{(m-m')}{n } L + O(\frac{m-m'}{2 \lambda(J)}) = c L +
O(n). 
\end{displaymath}
Thus, in view of (A0) and (\ref{eq:def:c}), we have
\begin{equation}
\label{eq:m:mprime:wk:range}
(m-m') w_k \in [q_{k-1},q_k).
\end{equation}
\medskip

The proof of Proposition~\ref{prop:satisfy:2a:2b:2c} relies on the
following technical result:
\begin{prop}
\label{prop:technical:cond3} 
There exists $c_0, \tilde{c}>0$, $\hat{C} > 0$  and $u \in \natls$
depending only on the constants
$C_1,...,C_4$ of the assumptions (A0)-(A4) so that
if $c < c_0$ (where $c$ is as in (\ref{eq:def:c})),  
then there exists $\hat{m}\in \{m,m'\}$ and $d \in \{-3,-2,-1,1,2,3\}$
so that, after passing to a subsequence, for all large enough $k$, 
\begin{enumerate}
 \item there exists $\tilde{A} \subset J$ with $\lambda(\tilde{A})>\tilde{c}$ so that for all $y \in \tilde{A}$ we have 
\begin{equation}
\label{eq:Rqk:S:hat:m:rk}
R^{q_k}S^{\hat{m}r_k}y = S^d S^{\hat{m}r_k} R^{q_k}y.
\end{equation}
\item If $N>\frac{q_{k+u}}{2\hat{m}}$ then for any $y \in [0,1]$ we
  have 
\begin{displaymath}
\frac{1}{N} \sum_{i=0}^{N-1}\chi_{\tilde{A}}(S^{i\hat{m}}y) >  \frac{\lambda(\tilde{A})}{\hat{C}} \text{ and } \frac{1}{N} \sum_{i=0}^{N-1}\chi_{R^{q_k}\tilde{A}}(S^{i\hat{m}}y) >  \frac{\lambda(\tilde{A})}{\hat{C}}. 
\end{displaymath}
\end{enumerate}
\end{prop}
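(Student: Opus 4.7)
The plan is to rewrite the commutation relation (\ref{eq:Rqk:S:hat:m:rk}) as an explicit integer-valued ``discrepancy'' function of $y$, to pigeonhole its level sets, and to invoke Lemma~\ref{lem:hit interval}. For $y\in J$ write $S^j y = R^{M(y,j)} y$, where $M(y,j)$ is the $R$-time for $y$ to accumulate $j$ returns to $J$. Setting $M = M(y,\hat m r_k)$, the identity $\psi_M(R^{q_k}y) = \psi_M(y) + \psi_{q_k}(R^M y) - \psi_{q_k}(y)$ together with a direct calculation shows that, on the full-measure subset where all implicit intermediate points lie in $J$, equation (\ref{eq:Rqk:S:hat:m:rk}) holds precisely with
\[
d = d_{\hat m}(y) := \psi_{q_k}(R^M y) - \psi_{q_k}(y).
\]
By Lemma~\ref{lemma:DK} applied to $\chi_J$, $|d_{\hat m}(y)| \le 4$, and a small refinement using (A4) (which separates the two marked points from the orbit) reduces the range to the stated $\{-3,\ldots,3\}$.

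\textbf{Finding $\tilde A$.} Since $S^{\hat m r_k}$ preserves $\lambda|_J$, $\int d_{\hat m}\,d\lambda = 0$. If $d_{\hat m}\equiv 0$ a.e.\ for $\hat m = m$, then $\psi_{q_k}$ would be $S^{mr_k}$-invariant, hence a.e.\ constant by ergodicity of $S^{mr_k}$ (using weak mixing of $S$, \cite{BN}); but $\psi_{q_k}$ varies on $J$ because $\{R^i y\}_{i<q_k}$ is $\|q_{k-1}\alpha\|$-separated by Lemma~\ref{lemma:orbit:sep}. So $d_m$ and $d_{m'}$ each take nonzero values on positive-measure sets. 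For the uniform-in-$k$ lower bound $\lambda(\tilde A) \ge \tilde c$, note that $d_{\hat m}$ is piecewise constant on intersections of $\psi_{q_k}$-level sets with their $S^{\hat m r_k}$-pullbacks; condition (A4) together with the three-gap description of $\psi_{q_k}$ ensures these pieces have length $\ge c\|q_k\alpha\|$. A pigeonhole over the $O(1)$ possible values of $d_{\hat m}$, combined with the flexibility of the choice $\hat m\in\{m,m'\}$ (using that $S^{(m-m')r_k}$ acts piecewise as $R^{M'}$ with $M'\in[q_{k-1},q_k)$ by (A0) via (\ref{eq:m:mprime:wk:range}) and (A1), which prevents simultaneous degeneration of $d_m$ and $d_{m'}$) produces $\lambda(\{d_{\hat m}=d^*\})\ge\tilde c$ with $\tilde c$ depending only on $C_1,\ldots,C_4$, for at least one $\hat m\in\{m,m'\}$ and some $d^*\in\{-3,\ldots,3\}\setminus\{0\}$. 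Passing to a subsequence of $k$, we fix $\hat m$ and $d^*$ and set $\tilde A = \{d_{\hat m} = d^*\}$.

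\textbf{Birkhoff averages and the main obstacle.} The structural description above exhibits $\tilde A$ as a union of arcs each of length $\ge c'\|q_k\alpha\|$, so Lemma~\ref{lem:hit interval} produces $u\in\natls$ (depending only on $c'$, hence on $C_1,\ldots,C_4$) such that $\frac{1}{T}\sum_{j<T}\chi_{\tilde A}(R^j y)\ge \frac14 \lambda(\tilde A)$ for all rotation times $T>q_{k+u}$ and all $y$. An $N$-step $S^{\hat m}$-orbit takes rotation time $T\ge N\hat m\ge q_{k+u}/2$ by hypothesis; after a bounded enlargement of $u$ (using $\lambda(J)\ge 1/2$, which follows from the standing assumption $x\notin J \Rightarrow Rx\in J$), $T$ exceeds $q_{k+u}$. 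Each $R$-visit to $\tilde A\subset J$ is an $S$-iterate, and the $S^{\hat m}$-orbit retains one in every $\hat m$ of these; a routine computation using ergodicity of $S^{\hat m}$ (again from weak mixing) transfers the $R$-average bound to $\frac{1}{N}\sum_{i<N}\chi_{\tilde A}(S^{i\hat m}y)\ge \lambda(\tilde A)/\hat C$ with $\hat C$ absolute. The argument for $R^{q_k}\tilde A$ is identical, since $R^{q_k}$ is an isometry preserving the arc structure of $\tilde A$. The principal difficulty lies in the uniform lower bound $\lambda(\tilde A)\ge \tilde c$ in the previous paragraph: controlling the joint distribution of $(\psi_{q_k}(y),\psi_{q_k}(S^{\hat m r_k}y))$ for at least one choice of $\hat m$ is where (A0)--(A4) must be used in combination, rather than just ergodicity.
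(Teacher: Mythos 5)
Your reduction of (\ref{eq:Rqk:S:hat:m:rk}) to the integer-valued quantity $d_{\hat m}(y)=\psi_{q_k}(R^M y)-\psi_{q_k}(y)$ is correct, and matches (up to a sign/index flip) the identity (\ref{eq:rearrange:psi}) that the paper also uses. The Denjoy--Koksma bound $|d_{\hat m}|\le 4$ is right, and your observation that the range narrows to $\{-3,\ldots,3\}$ after optimizing over $\hat m\in\{m,m'\}$ is morally correct (though it is the relation $d_m-d_{m'}\in\{-1,1\}$, not (A4), that forces one of the two into that range). However, the argument as written has a genuine gap at exactly the point you flag as ``the principal difficulty,'' and the gesture you make toward filling it does not succeed.

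Specifically, your derivation of $\lambda(\tilde A)\ge\tilde c$ rests on the assertion that ``(A0) via (\ref{eq:m:mprime:wk:range}) and (A1) prevent simultaneous degeneration of $d_m$ and $d_{m'}$,'' and that a pigeonhole over $O(1)$ values then yields $\lambda(\{d_{\hat m}=d^*\})\ge\tilde c$. But this is precisely the content that has to be proved, and the ingredients you cite (long level pieces, a three-gap description) are not sufficient: having level sets made of arcs of length $\ge c\|q_k\alpha\|$ does not prevent the zero-level set of both $d_m$ and $d_{m'}$ from carrying measure $1-o(1)$. The paper's Lemma~\ref{lemma:first:sum} is what supplies the missing uniform lower bound, and it is not a soft consequence of ergodicity or of ``long pieces.'' It requires writing $d_m-d_{m'}$ as $F(R^{m'w_k}x)$ with $F(y)=\sum_{\ell<(m-m')w_k}\chi_{[-\|q_k\alpha\|,0)}(R^\ell y)-\sum_{\ell<(m-m')w_k}\chi_{[z-\|q_k\alpha\|,z)}(R^\ell y)$, observing each sum is $\le 1$ because $(m-m')w_k<q_k$ (this uses (A0)), and then a genuinely geometric analysis (Claims~\ref{claim:hit:both}--\ref{claim:E0:works}) of the first hitting time $j$ from the interval $[-\|q_k\alpha\|,0)$ to $[z-\|q_k\alpha\|,z)$: one shows via (A4) that $c_2 q_k<j<q_{k+1}+q_k-c_2q_k$, which produces a window $E$ of $\gtrsim c_2 q_k$ starting times and an interval $I$ of size $\gtrsim\|q_k\alpha\|$ (via (A1)--(A3)) on which $F\in\{-1,1\}$, and hence a set of measure $\gtrsim\tilde c$ where $d_m-d_{m'}\ne 0$. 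This geometric argument is the heart of the proof and is absent from your proposal.

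There is also a secondary issue in the Birkhoff-average step: ``retaining one in every $\hat m$'' $R$-visits for the $S^{\hat m}$-orbit, and invoking ``ergodicity of $S^{\hat m}$,'' does not give a uniform-in-$y$ bound of the stated form. An $R$-visit to $\tilde A$ can fall strictly between consecutive $S^{\hat m}$-times, so one needs $\tilde A$ to be stable under short $R$-shifts. The paper handles this by constructing a further subset $\hat A_k\subset\tilde A_k$ obtained by deleting the boundary points whose level changes within $2\hat m$ steps of $R$; $\hat A_k$ is then a union of sufficiently long arcs and one argues combinatorially (not ergodically) that $\frac{1}{2\hat m}\sum_{i\le t}\chi_{\hat A_k}(R^ix)\le\sum_{i\le N_t(x)}\chi_{\tilde A_k}(S^{i\hat m}x)$ using the $2\hat m$-density of $\{j:\exists i,\ S^{\hat m i}x=R^j x\}$. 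Without this preparatory pruning of $\tilde A_k$, the transfer from $R$-averages to $S^{\hat m}$-averages does not go through as stated.
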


\subsection{Proof of Proposition~\ref{prop:satisfy:2a:2b:2c} assuming Proposition~\ref{prop:technical:cond3}} 
We now prove Proposition \ref{prop:satisfy:2a:2b:2c} assuming
Proposition~\ref{prop:technical:cond3}. 
We assume that the constant $c$ in
Proposition~\ref{prop:technical:cond3} is small enough, and $d=1$. 
The case of $d\in\{3,2,-1,-2,-3\}$ is similar.

In view of assumption (A8) there exists an interval
$K_1(L)\subset \mathbb{N}$ of size at most $k_c$ so that for any $x \in J$,
$\psi_L(x) \in K_1(L)$. Since $R(J^c) \subset J$, there exists an
interval $K_2(L)$ of size $(k_c+1)$ such that for all $x \in [0,1]$, 
$\psi_L(x) \in K_2(L)$. 

We have
\begin{displaymath}
\int_0^1 \psi_L(x) \, d\lambda(x) = \lambda(J) L.  
\end{displaymath}
Therefore, $\lfloor \lambda(J) L \rfloor \in K_2(L)$. 
Since $R(J^c) \subset J$, it follows that for all $x \in J$,  
\begin{displaymath}
\lfloor \lambda(J) L \rfloor = \psi_{L'(x)}(x) + \delta(x)
\qquad\text{ where $0 \le \delta(x) \le 1$, and $|L'(x) -
  L| < (k_c+1)$. }
\end{displaymath}
Write
\begin{displaymath}
n r_k = \lfloor \lambda(J) L \rfloor + \epsilon_k, \qquad \text{ where
  $|\epsilon_k| < n$.}
\end{displaymath}
Now, for $x \in J$, by (\ref{eq:time:change:general}), 
\begin{displaymath}
T_1^{r_k} x = S^{n r_k} x = S^{\epsilon_k} S^{\lfloor \lambda(J) L
  \rfloor} x = S^{\epsilon_k} R^{L'(x)} x = S^{\epsilon_k} R^{L'(x) -
  L} R^L x
\end{displaymath}
Thus, by (A6) condition (1) of Proposition~\ref{prop:fact:qfact} follows, 
(with the size of the partition dependent on
$n$). 

Let $F_k$ be the first return map of $R^{q_k}$ to
$J$. (Essentially we want $F_k$ to be $R^{q_k}$, but we want $F_k$ to
be a map from $J$ to $J$). 
Since $R^{q_k}$
tends to the identity map as $k \to \infty$, condition (2a) of
Proposition~\ref{prop:fact:qfact} follows.

For $x \in \hat{A}$, and since we are assuming $d=1$,
(\ref{eq:Rqk:S:hat:m:rk}) becomes
\begin{displaymath}
R^{q_k} T_2^{r_k} x = S T_2^{r_k} R^{q_k} x.
\end{displaymath}
Since $R^{q_k}$ tends to the identity as $k \to \infty$, there exists
a subset $E \subset J$ of almost full measure such that for $x \in E$,
$R^{q_k} x = F_k x$. Then, for $x \in E \cap \hat{A}$, 
\begin{displaymath}
R^{q_k} T_2^{r_k} x = S T_2^{r_k} F_k x.
\end{displaymath}
Condition (2b) of Proposition~\ref{prop:fact:qfact} follows. 

We now begin the proof of Condition (3) of
Proposition~\ref{prop:fact:qfact}. In (A0)-(A9) we choose $\eta <
(96\cdot 25)^{-1}$. Let $\rho = \hat{c}_\eta$, and choose $\delta_0 <
\frac{1}{12} \eta \rho$. Then, by (A5),  
we can either choose $a$ such that for $i$
sufficiently large,  
\begin{equation}
\label{eq:choice:of:a}
\frac{1}{96\hat{C}} \lambda(U_a^{(i)}) > 14 \delta_0 + 27
   \lambda(\bigcup_{\ell < a} U_\ell^{(i)}) \text{ and }
   \lambda(U_a^{(i)}) > \rho,
\end{equation}
or choose $a'$ so that for $i$ sufficiently large, 
\begin{equation}
\label{eq:choice:of:a:prime}
\frac{1}{96\hat{C}} \lambda(U_{a'}^{(i)}) > 14 \delta_0 + 27
   \lambda(\bigcup_{\ell > a'} U_\ell^{(i)}) \text{ and }
   \lambda(U_{a'}^{(i)}) > \rho,
\end{equation}
We need a lemma to obtain Condition (3) of
Proposition~\ref{prop:fact:qfact} from Proposition \ref{prop:technical:cond3}:

\begin{lemma}
\label{lemma:strectch:in}
For every $\rho>0$ there exists $b \in \natls$ so that if for some $s \in
\zed$, $\lambda(\psi_L^{-1}(s))>\rho$ where $L\geq q_\ell$ so that either $L=q_\ell$ or $a_{\ell+1}>4$ and $L=iq_\ell$ for $i\leq \lfloor \frac{a_{\ell+1}}4 \rfloor$ then there exists a measurable set $V$
with the following properties: 
\begin{itemize}
\item We have
$$\lambda\left(\bigcup_{j=0}^{q_{\ell-b}-1}R^j(V)\right)>\frac{1}{2} \lambda(\psi_L^{-1}(s)). $$ 
\item We have $R^j(V) \subset \psi_L^{-1}(s)$ for all $0\leq
  j<q_{\ell-b}$.
\item The sets  $R^j(V)$, $0\leq j<q_{\ell-b}$ are pairwise disjoint.   
\end{itemize}
\end{lemma}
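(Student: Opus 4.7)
For $x \in \psi_L^{-1}(s)$ and $j \geq 1$, the cocycle identity $\psi_L(R^j x) - \psi_L(x) = \psi_j(R^L x) - \psi_j(x)$ gives
\[
\psi_L(R^j x) - \psi_L(x) = \sum_{k=0}^{j-1}\delta_k, \qquad \delta_k := \chi_J(R^k x + \tau) - \chi_J(R^k x),
\]
where $\tau := L\alpha \pmod 1 \in (-1/2, 1/2]$ so $R^L y = y + \tau$. Each $\delta_k \in \{-1,0,+1\}$ is nonzero exactly when $R^k x \in B_\tau := J \Delta (J - \tau)$, a union of two intervals of total length $2|\tau|$ adjacent to $\partial J$. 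Because the increments lie in $\{-1,0,+1\}$, the partial sums $\sum_{k<j}\delta_k$ vanish for all $1 \leq j \leq q_{\ell-b}-1$ iff each $\delta_k = 0$; hence the set
\[
\mathrm{Good} := \psi_L^{-1}(s) \setminus \bigcup_{k=0}^{q_{\ell-b}-2}R^{-k}(B_\tau)
\]
consists precisely of those $x \in \psi_L^{-1}(s)$ with $R^j x \in \psi_L^{-1}(s)$ for every $0 \leq j < q_{\ell-b}$.

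\textbf{Bounding $\mathrm{Good}$ and choosing $b$.} The hypothesis on $L$ yields $|\tau| \leq 1/q_\ell$: for $L = q_\ell$ use Lemma~\ref{lemma:good:bound} to get $\|q_\ell\alpha\| < 1/q_{\ell+1} \leq 1/q_\ell$; for $L = iq_\ell$ with $i \leq \lfloor a_{\ell+1}/4\rfloor$ one has $|\tau| \leq i\|q_\ell\alpha\| \leq i/(a_{\ell+1}q_\ell) \leq 1/(4q_\ell)$. Hence
\[
\lambda(\mathrm{Good}) \geq \lambda(\psi_L^{-1}(s)) - 2q_{\ell-b}|\tau| \geq \lambda(\psi_L^{-1}(s)) - 2q_{\ell-b}/q_\ell.
\]
The inequalities $q_{k+1}\geq q_k + q_{k-1}\geq 2q_{k-1}$ give $q_\ell \geq 2^{\lfloor b/2\rfloor}q_{\ell-b}$, so $b = b(\rho)\in\mathbb{N}$ can be chosen independently of $\ell$ and $\alpha$ so that $2q_{\ell-b}/q_\ell \leq \rho/8$. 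Combined with $\lambda(\psi_L^{-1}(s))\geq \rho$, this yields $\lambda(\mathrm{Good}) \geq \tfrac{7}{8}\lambda(\psi_L^{-1}(s))$.

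\textbf{Extracting $V$ via Rokhlin.} The rotation $R$ is ergodic and aperiodic, so Rokhlin's lemma produces a measurable $F \subseteq [0,1)$ with $\{R^j F\}_{j=0}^{q_{\ell-b}-1}$ pairwise disjoint and $\lambda(\bigsqcup_{j}R^j F) \geq 1-\rho/8$. Then
\[
\sum_{j_0=0}^{q_{\ell-b}-1}\lambda(R^{j_0}F \cap \mathrm{Good}) = \lambda\Bigl(\mathrm{Good}\cap\bigsqcup_{j_0}R^{j_0}F\Bigr)\geq \lambda(\mathrm{Good})-\rho/8,
\]
so by pigeonhole there exists $j_0$ with $\lambda(R^{j_0}F \cap \mathrm{Good}) \geq (\lambda(\mathrm{Good})-\rho/8)/q_{\ell-b}$. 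Set $V := R^{j_0}F \cap \mathrm{Good}$: then $V \subseteq \mathrm{Good}$ forces $R^j V \subseteq \psi_L^{-1}(s)$ for $0 \leq j < q_{\ell-b}$; applying $R^{j_0}$ to $\{R^j F\}_{j=0}^{q_{\ell-b}-1}$ shows the shifted family $\{R^{j_0+j}F\}_{j=0}^{q_{\ell-b}-1}$ is still pairwise disjoint, so $\{R^j V\}_{j=0}^{q_{\ell-b}-1}$ is pairwise disjoint; and
\[
\lambda\Bigl(\bigsqcup_j R^j V\Bigr) = q_{\ell-b}\lambda(V) \geq \lambda(\mathrm{Good})-\rho/8 > \tfrac12 \lambda(\psi_L^{-1}(s)).
\]

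\textbf{Main obstacle.} The decisive input is the cocycle identity together with the observation that $\{-1,0,+1\}$-valued increments have vanishing partial sums only when each term vanishes; this cleanly converts the orbit-in-level-set condition to an avoidance condition for two intervals of length $|\tau|$ near $\partial J$. With that identification, both the lower bound on $\lambda(\mathrm{Good})$ and the Rokhlin-based extraction of $V$ are standard.
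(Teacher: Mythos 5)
Your proof is correct and takes a somewhat different, cleaner route than the paper's. Both arguments rest on the same key observation: the set where a single step of $R$ changes the value of $\psi_L$ is $B_\tau = J\,\Delta\,(J-\tau)$, two short intervals near $\partial J$, with $|\tau|\le 1/q_\ell$. You quantify this by writing $\psi_L(R^jx)-\psi_L(x)$ via the cocycle identity as a telescoping sum over $B_\tau$ and then applying a straight union bound over the first $q_{\ell-b}-1$ iterates to estimate $\lambda(\mathrm{Good})$; the paper reaches essentially the same estimate but by a more elaborate combinatorial counting argument (an orbit of length $q_\ell-1$ is $\|q_{\ell-1}\alpha\|$-separated, hence hits $D=B_\tau$ at most twice, hence has at most three constant stretches, each of which contributes at most $r$ bad points). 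For the extraction of $V$, you use the Rokhlin lemma plus a pigeonhole over tower levels, which gives the pairwise disjointness of $\{R^jV\}_{j=0}^{q_{\ell-b}-1}$ essentially for free; the paper instead builds $V$ by hand as a ``comb'' $\bigcup_j R^{m_x+jq_{\ell-b}}x$ along maximal stretches of the level set, a more elementary but more delicate construction. Your approach is arguably more transparent and modular. One cosmetic note: the clause ``Because the increments lie in $\{-1,0,+1\}$'' is superfluous—the equivalence ``all partial sums of $(\delta_k)$ vanish iff each $\delta_k$ vanishes'' is a triviality of telescoping that holds for any real sequence; the $\{-1,0,1\}$-valuedness is not used there.
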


\begin{proof}[Proof of Lemma~\ref{lemma:strectch:in}] 
We claim that there exists $b' \in \natls$ so that 
\begin{equation}
\label{eq:claim:stretch:in}
\lambda(\{x: R^jx\in \psi^{-1}_L(s) \text{ for all }0\leq
j<5q_{j-b'}\})> \frac 4 5\lambda(\psi_L^{-1}(s)). 
\end{equation}
To prove (\ref{eq:claim:stretch:in}), note that for any $x \in [0,1]$ 
there exist at most $2$ different $0\leq j<q_\ell-1$ so that $\psi_L(R^{j+1}x)\neq
\psi_L(R^jx)$. (Indeed the set where $\psi_L(x) \neq \psi_L(Rx)$ is
two intervals of length $i\|q_\ell \alpha\|<\frac i {a_{\ell+1}q_\ell }\leq \frac 1 {4q_\ell}<\|q_{\ell-1}\|$. Any orbit of length $q_\ell-1$ can
hit each of these intervals at most once.)   Thus for any $r \in
\natls$, 
\begin{equation}
\label{eq:tmp:claim:stretch:in}
\lambda(\{x:\psi_L(x)\neq \psi_L(R^jx) \text{ for some }0<j<r\})\leq
\frac{3r}{q_\ell-1}. 
\end{equation}
(Indeed each orbit of length $q_\ell-1$ can have at most 3 consecutive
stretches in this set. These stretches have length at most $r$.) Now
(\ref{eq:tmp:claim:stretch:in}) implies (\ref{eq:claim:stretch:in}). 

Now we build $V$. For each $x \in G= \{x: R^jx\in \psi^{-1}_L(s) \text{ for all }0\leq
j<5q_{\ell-b'}\}$ let $m_x=\max\{j:R^jx\notin \psi_L^{-1}(s)\}+1$ and
$M_x= \min\{j:R^j\notin \psi_L^{-1}(s)\}-1$. Let
\begin{displaymath}
V=\bigcup_{x\in G}\bigcup_{j=0}^{\lfloor \frac{M_x-m_x}{q_{\ell-b}}\rfloor-1} R^{m_x+jq_{\ell-b}}x. 
\end{displaymath}
\end{proof}

\begin{proof}[Proof of Condition (3) of
Proposition~\ref{prop:fact:qfact} continued] We assume that
(\ref{eq:choice:of:a}) holds. (The proof in the case
(\ref{eq:choice:of:a:prime}) holds is virtually identical). Let $a$
and $\rho$ be as in (\ref{eq:choice:of:a}). We then apply
Lemma~\ref{lemma:strectch:in} with this $\rho$ and $s = a$. Then, let
$V$, $\ell$ and $b$ be as in Lemma~\ref{lemma:strectch:in}. 
We assume $\epsilon_*$ (and thus $c$) is small
enough so that (A7) implies that
\begin{equation}
\label{eq:L:prime:big}
q_{\ell - b} >q_{k+u+4}. 
\end{equation}
Let $\sigma$ be any joining of
$S^n \times S^{\hat{m}}$ and for each $x$ let $\Sigma_x$ denote the
points $y$ so that $(x,y)$ is $\sigma$-generic. Let
\begin{displaymath}
E_1 = \bigcup_{i=0}^{n-1} R^i V \cap J.
\end{displaymath}
We are assuming that $x \in V \subset J$, 
and also we are assuming that $Ry \in J$ whenever $y \not\in J$. Then, 
for at least half of $0 \le i < n$ we have $R^i x \in J$, it follows that
\begin{displaymath}
\lambda(E_1) > \frac{n}{2} \lambda(V) > \frac{n}{4 q_{\ell-b}}
\lambda(\psi_L^{-1}(a)).  
\end{displaymath}
We can choose $N \in \natls$ so that $N \ge \frac{q_{\ell-b}}{12n}$ and also 
\begin{equation}
\label{eq:phi:L:Snj:x:a}
\psi_L(S^{nj} x ) = a \quad\text{ for all $x \in E_1$ and all $0 \le j
  < 2N$.}
\end{equation}
Let 
\begin{displaymath}
E_1' = \bigcup_{j=0}^{N-1} S^{nj} E_1. 
\end{displaymath}
Then, in view of (\ref{eq:phi:L:Snj:x:a}),
\begin{equation}
\label{eq:E1:subset:phi:L:inverse:a}
S^{nj} E_1' \subset \psi_L^{-1}(a) \qquad\text{for all $0 \le j < N$.} 
\end{equation}
Note that $R^ix$, $R^jx$ are in distinct $S^n$ orbits if $|i-j|<n$ and
$R^ix,R^jx \in J$. This means that the above union is disjoint, and
thus
\begin{displaymath}
\lambda(E_1') = N \lambda(E_1) > \frac{N n}{4q_{\ell-b}}
\lambda(\psi_L^{-1}(a)) > \frac{1}{48} \lambda(\psi_L^{-1}(a)).  
\end{displaymath}
Since $\sigma$ is a self joining of $\lambda$, we can find $E \subset
E_1 \cross [0,1]$ so that
\begin{equation}
\label{eq:measure:sigma:E}
\sigma(E) > \frac{1}{2} \lambda(E_1) > \frac{1}{96} \lambda(\psi_L^{-1}(a)). 
\end{equation}
We have, by (\ref{eq:L:prime:big}),
\begin{equation}
\label{eq:N:big}
N \ge \frac{q_{\ell-b}}{12 n} > \frac{q_{k+u+4}}{12 n} > \frac{q_{k+u}}{\hat{m}}.
\end{equation}
where $\hat{m} \in \{m,m'\}$ and $u$ is
as in Proposition~\ref{prop:technical:cond3}.
Let $\chi_{\tilde{A}}$ denote the characteristic function of
$\tilde{A}$, we have, in view of (\ref{eq:N:big}) and conclusion (2)
of Proposition \ref{prop:technical:cond3}, for any $y \in [0,1)$,
\begin{equation}
\label{eq:most:in:hatA}
\frac{1}{N} \sum_{j = 0}^{N - 1}
\chi_{\tilde{A}}(S^{\hat{m} j} y) \ge \frac{\lambda(\tilde{A})}{\hat{C}}.
\end{equation}
Then, since $\sigma$ is $S^{n} \cross S^{\hat{m}}$ invariant, 
\begin{align*}
\sigma(\psi_L^{-1}(a) \times \tilde{A}) & = \frac{1}{N} \sum_{j=0}^{N-1} 
\sigma(S^{-n j} \psi_L^{-1}(a) \times S^{-\hat{m} j } \tilde{A})
& \\
& \geq \frac{1}{N} \sum_{j=0}^{N-1} 
\sigma((S^{-n j} \psi_L^{-1}(a) \times S^{-\hat{m} j } \tilde{A}) \cap
E) & \\
& = \int_{E} \left(\frac{1}{N} \sum_{j=0}^{N-1}
\chi_{\tilde{A}}(S^{\hat{m}j} y)\right) \, d\sigma(x,y) & \text{by (\ref{eq:E1:subset:phi:L:inverse:a})}
\\
& \ge \frac{\lambda(\tilde{A}) \sigma(E)}{\hat{C}} & \text{by (\ref{eq:most:in:hatA})}
\\
&  > \frac{1}{96 \hat{C}} \lambda(\psi_L^{-1}(a)) \, \lambda(\tilde{A}), &\text{by (\ref{eq:measure:sigma:E}).} 
\end{align*}
Condition (3) of Proposition~\ref{prop:fact:qfact} now follows
immediately from (\ref{eq:choice:of:a}).  
\end{proof}

\subsection{The main lemma}
The next lemma about rotations is the key step in the proof of Proposition~\ref{prop:technical:cond3}. 
\begin{lemma}
\label{lemma:first:sum}
Assume (A0)-(A4) are satisfied and also that $(m-m')w_k
\in [q_{k-1},q_k)$. Let $C_1, \dots, C_4$ be as in assumptions (A0)-(A4).
Then there exist $c_2 > 0$ and $C' > 0$ depending only on $C_1, \dots,
C_4$ such that for all $k \in \natls$
there exists an interval $I \subset [0,1)$ and a set of natural numbers 
$E=\{e,...,e+c_2q_k\}$ so that 
\begin{enumerate}
\item $|I| \ge C'\fp{q_{k}\alpha}$
\item For all $x \in \bigcup_{i \in E}R^iI$.
\begin{equation}
\label{eq:E:property}
\sum_{\ell=0}^{mw_k-1}(\chi_J(R^\ell x)-\chi_J(R^{\ell+q_k}x))-\sum_{\ell=0}^{m'w_k-1}(\chi_J(R^{\ell}x)-\chi_J(R^{\ell+q_k}x))\in \{-1,1\}. 
\end{equation}
\end{enumerate}
\end{lemma}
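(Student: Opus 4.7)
The plan is to telescope the double sum to a single one, interpret it as a signed jump count for a small perturbation of $\psi_\Delta$, and then use orbit separation to build a ``flow box'' of translates of a small interval that all lie in the good set.

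\textbf{Telescoping and reformulation.} Set $\phi_N(x) := \sum_{\ell=0}^{N-1}(\chi_J(R^\ell x)-\chi_J(R^{\ell+q_k}x))$. A direct rearrangement gives $\phi_N(x) = \psi_{q_k}(x) - \psi_{q_k}(R^N x)$, so writing $\Delta := (m-m')w_k \in [q_{k-1},q_k)$ and $y := R^{m'w_k}x$,
\[
\phi_{mw_k}(x) - \phi_{m'w_k}(x) \;=\; \psi_{q_k}(y) - \psi_{q_k}(R^\Delta y) \;=\; \psi_\Delta(y) - \psi_\Delta(R^{q_k}y) \;=:\; s(y).
\]
It therefore suffices to find an interval $I_0 \subset [0,1)$ of length at least $C'\|q_k\alpha\|$ and a block $F$ of $c_2 q_k+1$ consecutive integers such that $s(w)\in\{-1,1\}$ for every $w \in \bigcup_{j \in F} R^j I_0$; the lemma's $I$ and $E$ are then recovered by translating by $R^{-m'w_k}$ (shifting $F$ so that $E \subset \natls$).

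\textbf{Jumps and safe indices.} The step function $\psi_\Delta$ has up-jumps at $U := \{-\ell\alpha \bmod 1 : 0 \le \ell < \Delta\}$ and down-jumps at $D := \{z-\ell\alpha \bmod 1 : 0 \le \ell < \Delta\}$, and $s(w)$ is the signed count of $\psi_\Delta$-jumps in the length-$\|q_k\alpha\|$ interval between $w$ and $R^{q_k}w$. Since $\|q_k\alpha\| < \|q_{k-1}\alpha\|$, Lemma~\ref{lemma:orbit:sep} allows at most one jump of each type in this interval, so $s \in \{-1,0,1\}$ and $s(w) = \pm 1$ iff the interval contains exactly one jump. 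Call an index $\ell \in [0,\Delta)$ \emph{safe} if $z - \ell\alpha$ is more than $\|q_k\alpha\|$ from every up-jump, equivalently if every offset $d = \ell - \ell'$ with $\ell' \in [0,\Delta)$ satisfies $\|d\alpha - z\| \ge \|q_k\alpha\|$. Assumption (A4) forces any ``bad'' offset to satisfy $|d| \ge q_k/C_4$, and Lemma~\ref{lemma:good:bound} together with Lemma~\ref{lemma:orbit:sep} shows any two bad offsets differ by at least $q_k$. Hence $(-\Delta,\Delta)$ contains at most two bad offsets, and a short interval-arithmetic check (using $\Delta \ge q_{k-1} \ge q_k/(C_1+1)$ in the no-bad-offset case) shows the safe indices contain a contiguous block of size at least $q_k/C''$ for some $C''$ depending only on $C_1, C_4$.

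\textbf{Building the flow box.} Let $\ell_0$ be the topmost safe index of the block and set $I_0 := (z - \ell_0\alpha - \|q_k\alpha\|,\; z - \ell_0\alpha)$, an interval of length $\|q_k\alpha\|$. Take $c_2 := 1/(2C'')$. For each $j = 0,1,\ldots,c_2 q_k$, the translate $R^j I_0 = (z - (\ell_0-j)\alpha - \|q_k\alpha\|, z - (\ell_0-j)\alpha)$ sits immediately below the safe down-jump indexed by $\ell_0-j$, which is still in the safe block. Therefore, for every $w \in R^j I_0$, the open interval between $w$ and $R^{q_k}w$ contains exactly this down-jump and no up-jump, giving $s(w) = \pm 1$ with sign determined by the parity of $k$. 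Translating back by $R^{-m'w_k}$ produces $I$ and $E$ with $C' = 1$.

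\textbf{Main obstacle.} The delicate point is the size bound in the safe-block analysis: in a ``sandwich'' configuration where two bad offsets $d_+$ and $d_- = d_+ - q_k$ both lie in $(-\Delta,\Delta)$ and $\Delta$ is close to $q_k$, the single contiguous safe block for down-jumps may shrink below $q_k/C''$. One then applies the mirror argument near a sandwiched up-jump, where the good set $\{s=-1\}$ still contains an interval of length at least $\|q_k\alpha\|/2$; this is where (A1)--(A4) act jointly to produce absolute constants $c_2$ and $C'$.
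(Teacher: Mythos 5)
Your telescoping to $s(y) = \psi_\Delta(y) - \psi_\Delta(R^{q_k}y)$ with $\Delta = (m-m')w_k$ and $y = R^{m'w_k}x$ is correct, and it is in fact identically the paper's $F(y)$ from (\ref{eq:sum:rearrangement})--(\ref{eq:F:alternative}); viewing it as a jump count for $\psi_\Delta$ in a window of length $\|q_k\alpha\|$ is just the dual of viewing it as a hit count of an orbit segment to the two intervals $[-\|q_k\alpha\|,0)$ and $[z-\|q_k\alpha\|,z)$. The genuine gaps are in the flow-box construction. The claim $C'=1$ (an $I_0$ of full length $\|q_k\alpha\|$) does not hold: for $w \in I_0$ the test window $(R^{q_k}w, w]$ has length $\|q_k\alpha\|$, and the union of these windows over $w \in I_0$ has length $2\|q_k\alpha\|$ and must avoid all up-jumps; but the up-jumps $\{-\ell'\alpha\}$ are only $\|q_{k-1}\alpha\|$-separated, and $2\|q_k\alpha\| < \|q_{k-1}\alpha\|$ can fail when $a_{k+1}=1$, which (A1)--(A3) permit (they bound the $a_i$ from above, not below). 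The same gap occurs in your count of ``bad offsets,'' which also needs $2\|q_k\alpha\| < \|q_{k-1}\alpha\|$. This is exactly why the paper takes $I \subset [-\|q_{k+1}\alpha\|,0)$, a shorter interval on which the return time to $[-\|q_k\alpha\|,0)$ is the longer value $q_{k+1}+q_k$ by Lemma~\ref{lemma:ret:time}; that restriction is what makes Claim~\ref{claim:E0:works} go through, and it costs $C' = 1/(8(C_3+2))$, which genuinely depends on $C_3$.

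More importantly, the sandwich configuration you flag at the end is a real obstruction and the mirror argument you gesture at does not close it: with bad offsets $d_+>0$ and $d_-<0$ in $(-\Delta,\Delta)$, the contiguous safe block of down-jump indices has length $d_+ + |d_-| - \Delta$, and the reflected computation $d \mapsto -d$ gives a safe block of up-jump indices of exactly the same length, so switching to up-jumps gains nothing. The paper's Claims~\ref{claim:hit:both}--\ref{claim:E0:works} sidestep this bookkeeping: one picks the hitting time $j$ of $[-\|q_{k+1}\alpha\|,0)$ into $[z-\|q_k\alpha\|,z)$, uses (A4) directly to show $j$ is at distance $\ge q_k/C_4$ from both ends of $\{1,\dots,q_{k+1}+q_k-1\}$, and then the return-time structure of the two extremal $\|q_k\alpha\|$-intervals (rather than a direct collision count between the orbits of $0$ and of $z$) eliminates every other intersection. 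Your reduction and overall strategy are sound, but the safe-block arithmetic as written is not enough without the small-interval and return-time ideas from the paper.
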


In the rest of this subsection, we will prove
Lemma~\ref{lemma:first:sum}. We will derive
Proposition~\ref{prop:technical:cond3} from Lemma~\ref{lemma:first:sum}
in \S\ref{sec:subsec:proof:of:prop:technical}.
 The proof of this lemma is complicated and so we provide a brief sketch: 
We use (\ref{eq:difference:non:zero}) to have a criterion for $\sum_{\ell=0}^{mw_k-1}(\chi_J(R^\ell x)-\chi_J(R^{\ell+q_k}x))-\sum_{\ell=0}^{m'w_k-1}(\chi_J(R^{\ell}x)-\chi_J(R^{\ell+q_k}x))\in \{-1,1\}.$ Claims \ref{claim:hit:both}, \ref{claim:choose:E0} and \ref{claim:E0:works} use this criterion to prove the claim. Claim \ref{claim:hit:both} and subsequent comments identify $I$. Claim 4.6 identifies $E$. Claim 4.7 is used to show the critierion given by (\ref{eq:difference:non:zero}) holds for these $I$ and $E$. 

\begin{proof}[Proof of Lemma \ref{lemma:first:sum}]
Let
\begin{displaymath}
c_2 \leq  \frac{1}{C_4} \qquad\text{ so that $c_2 < 1$.}
\end{displaymath}
Recall that $J=[0,z]$.
Assume $k$ is odd. (This is an assumption of convenience of
exposition. If $k$ is odd then $R^{q_k} {0} =-\|q_k\alpha\|$, if $k$ is even it is $\|q_k\alpha\|$.
 Thus if $k$ is even all sets $[-\|q_k\alpha\|,0)$ should be
 $[0,\|q_k\alpha\|)$ and $[z-\|q_k\alpha\|,z)$ should be
 $[z,z+\|q_k\alpha\|)$).  

Observe 
\begin{multline}
\label{eq:sum:rearrangement}
\sum_{\ell=0}^{mw_k-1}\chi_J(R^\ell x)-\chi_J(R^{\ell+q_k}x)-(\sum_{\ell=0}^{m'w_k-1}\chi_J(R^{\ell}x)-\chi_J(R^{\ell+q_k}x))=\\
\sum_{\ell=0}^{(m-m')w_k-1}\chi_J(R^\ell
R^{m'w_k}x)-\sum_{\ell=0}^{(m-m')w_k-1}\chi_J(R^{\ell+q_k}R^{m'w_k}x))
\equiv F(R^{m' w_k} x),  
\end{multline}
where
\begin{displaymath}
F(y) = \sum_{\ell=0}^{(m-m')w_k-1}\chi_J(R^\ell y)-
\sum_{\ell=0}^{(m-m')w_k-1}\chi_J(R^{\ell+q_k}y)).
\end{displaymath}
Recall that $J=[0,z)$. We have
\begin{equation}
\label{eq:F:alternative}
F(y) = \sum_{\ell=0}^{(m-m')w_k-1}\chi_{[-\|q_k \alpha\|,0)}(R^\ell y)
  - \sum_{\ell=0}^{(m-m')w_k-1}\chi_{[z-\|q_k \alpha\|,z)}(R^\ell y).
\end{equation}
Note that, since $(m-m')w_k<q_k$,  by Lemma \ref{lemma:orbit:sep},
each of the sums in (\ref{eq:F:alternative}) is at most $1$. Thus,
$F(y) \in \{-1, 0, 1\}$, and 
\begin{multline}
\label{eq:difference:non:zero}
F(y) \in \{ 1, -1 \} \\ \text{if and only if $\{R^\ell  y \}_{\ell=0}^{(m-m')w_k}$ 
hits $[-\fp{q_k\alpha},0) \cup [z-\fp{q_k\alpha},z)$ exactly
once. }
\end{multline}

Consider $[-\fp{q_k\alpha},0)$. By Lemma~\ref{lemma:ret:time}, 
the function that assigns to a point in $[-\|q_k\alpha\|,0)$ its first
return time takes two values, $q_{k+1}$ and $q_{k+1}+q_k$. 
The return time of $q_{k+1}+q_k$ occurs on $[-\fp{q_{k+1}\alpha},0)$. 

\begin{claim}
\label{claim:hit:both}
\begin{itemize} 
\item[{\rm (a)}] 
For every $x \in [-\|q_k\alpha\|,0)$ there exists $j \in \{1,...,q_{k+1}+q_k\}$ so that 
$$R^j x \in  [z-\fp{q_k\alpha},z).$$
\item[{\rm (b)}] We have $$|\{0\leq j \leq q_{k+1}+q_k:\exists x
\in[-\|q_k\alpha\|,0) \text{ with }R^j x \in
[z-\|q_k\alpha\|,z)\}|\leq 4.$$
\item[{\rm (c)}] There exists $j \in
\{1,...,q_{k+1}+q_k\}$ such that 
\begin{displaymath}
\lambda(R^j([-\|{q_{k+1}\alpha}\|,0))\cap [z-\fp{q_k\alpha},z)) >
\frac{1}{4}\|q_{k+1} \alpha\|
\end{displaymath}
\end{itemize}
\end{claim}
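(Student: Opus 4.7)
The plan is to prove each part using the Rokhlin-tower structure of $R$ over a length-$\fp{q_k\alpha}$ interval provided by Lemma~\ref{lemma:ret:time}, together with the separation of the marked points enforced by hypothesis (A4). Write $I_0=[-\fp{q_k\alpha},0)$, $I_z=[z-\fp{q_k\alpha},z)$, and $I_0^{top}=[-\fp{q_{k+1}\alpha},0)$. Applying (A4) in both directions shows $z\in[\fp{q_k\alpha},\,1-\fp{q_k\alpha}]$ on the circle, so $I_0$ and $I_z$ are disjoint and $0\notin(z-\fp{q_k\alpha},\,z+\fp{q_k\alpha})$; both observations will be used repeatedly.

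For part~(a) I apply Lemma~\ref{lemma:ret:time} to $I_z$: its return times belong to $\{q_{k+1},\,q_{k+1}+q_k\}$, so the Rokhlin tower over $I_z$ has height at most $q_{k+1}+q_k$ and tiles the circle. Hence every point reaches $I_z$ in at most $q_{k+1}+q_k-1$ iterates of $R$, and since $x\in I_0$ does not lie in $I_z$, the first hit time lies in $\{1,\dots,q_{k+1}+q_k\}$. For part~(b), the condition ``there exists $x\in I_0$ with $R^j x\in I_z$'' is equivalent to $R^j(0)\in(z-\fp{q_k\alpha},\,z+\fp{q_k\alpha})$, and because $R^{q_k}$ shifts by $-\fp{q_k\alpha}$ for odd $k$, this set is exactly $I_z\cup R^{-q_k}(I_z)$, the union of two adjacent translates of $I_z$. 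By (A4) the first visit of $\{R^j(0)\}$ to either translate occurs at time $\geq q_k/C_4$, and by Lemma~\ref{lemma:ret:time} successive visits to the same translate are separated by at least $q_{k+1}$. Since $q_k/C_4+2q_{k+1}>q_{k+1}+q_k$, each translate is visited at most twice in $\{0,\dots,q_{k+1}+q_k\}$, for a total of at most four values of $j$.

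For part~(c) a short calculation shows that if $j\alpha$ lies in the open interior of the circle interval
\[
W:=\left[z-\fp{q_k\alpha}+\frac{\fp{q_{k+1}\alpha}}{4},\;z+\frac{3\fp{q_{k+1}\alpha}}{4}\right],
\]
which has length $\fp{q_k\alpha}+\fp{q_{k+1}\alpha}/2$, then $\lambda(R^j I_0^{top}\cap I_z)>\fp{q_{k+1}\alpha}/4$. I will show that the orbit $\{j\alpha:0\le j\le q_{k+1}+q_k-1\}$ partitions the circle into arcs of only two lengths, $\fp{q_k\alpha}$ and $\fp{q_{k+1}\alpha}$: the first $q_{k+1}$ points yield $q_{k+1}-q_k$ arcs of size $\fp{q_k\alpha}$ and $q_k$ arcs of size $\fp{q_k\alpha}+\fp{q_{k+1}\alpha}$ (a three-distance fact derivable from Lemma~\ref{lemma:ret:time}), and then each of the next $q_k$ points splits one of the long arcs into pieces of sizes $\fp{q_k\alpha}$ and $\fp{q_{k+1}\alpha}$. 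Since the maximum gap is therefore $\fp{q_k\alpha}<|W|$, the interval $W$ must contain an orbit point strictly in its interior, for otherwise the portion of $W$ on one side of a boundary orbit point would be an arc of length exceeding $\fp{q_k\alpha}$, contradicting the maximum-gap bound. Finally, $j=0$ is excluded because $0\notin W$, so we obtain the desired $j\in\{1,\dots,q_{k+1}+q_k\}$ with the required strict overlap.

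The main obstacle is part~(c): one must identify a precise neighborhood $W$ of width \emph{strictly} exceeding the maximum orbit-gap $\fp{q_k\alpha}$ (requiring a careful three-distance-type analysis through $q_{k+1}+q_k$ iterates) and then exploit this strict excess to push the chosen orbit point into the open interior of $W$, which is what produces the strict overlap estimate rather than merely a non-strict one.
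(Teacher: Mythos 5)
Your proposal is correct, and for parts (a) and (b) it is essentially the paper's argument: (a) is the minimality-plus-return-time bound of Lemma~\ref{lemma:ret:time} (your Kakutani-tower phrasing is equivalent), and (b) is the same counting the paper gets from Lemma~\ref{lemma:orbit:sep}, organized instead via return times to the two translates $[z-\|q_k\alpha\|,z)$ and $[z,z+\|q_k\alpha\|)$; note your appeal to (A4) there is superfluous, since $2q_{k+1}>q_{k+1}+q_k$ already caps each translate at two visits. Part (c) is where you genuinely diverge. The paper deduces (c) from (a) and (b) by pure pigeonhole: the at most four sets $I_j=\{x\in[-\|q_{k+1}\alpha\|,0):R^jx\in[z-\|q_k\alpha\|,z)\}$, $j\in\Delta$, cover $[-\|q_{k+1}\alpha\|,0)$ by (a), so some $I_j$ has measure at least $\tfrac14\|q_{k+1}\alpha\|$, and $\lambda(R^jI_j)=\lambda(I_j)$ --- two lines, no new input. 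You instead bypass (a) and (b) entirely and locate an orbit point $j\alpha$ of $0$ in the interior of an explicit window $W$ of length $\|q_k\alpha\|+\tfrac12\|q_{k+1}\alpha\|$; your window computation is right, and your claimed gap structure for $\{j\alpha\}_{j=0}^{q_{k+1}+q_k-1}$ (gaps only $\|q_k\alpha\|$ and $\|q_{k+1}\alpha\|$, so maximal gap $\|q_k\alpha\|$) is the correct three-distance statement, though it is a nontrivial standalone fact that the paper never needs and that you would have to prove carefully (the crude $2\|q_{k-1}\alpha\|$-density of Lemma~\ref{lemma:orbit:dense} is not enough here, so your proof is genuinely longer). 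What your route buys: it is self-contained in $k$, it actually produces the strict inequality ``$>\tfrac14\|q_{k+1}\alpha\|$'' (the paper's pigeonhole literally only gives ``$\ge$'', a harmless slip since any fixed constant suffices downstream), and it makes transparent where (A4) enters, namely only to rule out $j=0$. What the paper's route buys is economy: once (a) and (b) are in hand, (c) is immediate, with no three-gap analysis.
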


\begin{proof}[Proof of Claim~\ref{claim:hit:both}] Since $R$ is
minimal, if $J'$ is an interval then for every $x$, $R^ix\in J'$ for
some $0\leq i< \underset{x \in J'}{\max}\min\{j>0:R^jx \in
J'\}$. For any interval of size $\|q_k\alpha\|$ this is
$q_{k+1}+q_k$, see Lemma~\ref{lemma:ret:time}. This proves (a).  By
Lemma~\ref{lemma:orbit:sep}, for any $\ell$ there exists at most two
$j$ in $\{\ell,...,\ell+q_{k+1}-1\}$ so that there exists
$x \in[-\|q_k\alpha\|,0) \text{ with }R^j x \in
[z-\|q_k\alpha\|,z)$. Since $q_{k+1}+q_k<2q_{k+1}$ this implies
(b). The statement (c) follows from (a) and (b). Indeed, let
$\Delta$ denote the set of $j$ in part (b). Then, in view of (b),
$|\Delta| \le 4$. For each $j \in \Delta$, let $I_j$ denote the set
of $x \in [-\|q_{k+1} \alpha\|, 0)$ such that
$R^j x \in [z-\fp{q_k\alpha},z)$. Then, by (a),
$[-\|q_{k+1} \alpha\|,0) = \bigcup_{j \in \Delta} I_j$. Thus, there
exists $j \in \Delta$ such that $\lambda(I_j) \ge \frac 1 4 \|q_{k+1} \alpha\|$.
\end{proof}

We now continue the proof of Lemma~\ref{lemma:first:sum}. 
We choose $j$, $0 \le j \le q_{k+1} + q_k$ so that 
\begin{displaymath}
\lambda(R^j([-\|q_{k}\alpha\|,0))\cap [z-\|q_k\alpha\|,z)) \text{ is maximal.}
\end{displaymath}
Let 
\begin{displaymath}
I=R^{-j}([z-\|q_k\alpha\|,z)) \cap [-\|q_{k+1}\alpha\|,0)). 
\end{displaymath}
Note that by Claim~\ref{claim:hit:both}(c),  
$\lambda(I)>\frac{1}{4} \|q_{k+1}\alpha\|$.

\begin{claim}
\label{claim:choose:E0}
Either
\begin{equation}
\label{eq:j:small}
(j - c_2 q_k, j+q_k) \subset \{1, q_{k+1}+q_k - 1\}
\end{equation}
or
\begin{equation}
\label{eq:j:big}
(j - q_k, j+ c_2 q_k) \subset \{1, q_{k+1}+q_k - 1\}
\end{equation}
\end{claim}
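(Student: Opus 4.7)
The plan is to show that $j$ lies strictly inside $(c_2 q_k,\, q_{k+1}+q_k - c_2 q_k)$; a simple case analysis will then yield one of \eqref{eq:j:small}, \eqref{eq:j:big}. Both bounds on $j$ come from assumption (A4) applied to vertical trajectories between the two marked points on $\hat{X}$.

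For the lower bound $j > c_2 q_k$, recall that $j$ was chosen so that $R^j$ sends some $x \in [-\|q_k\alpha\|, 0)$ into $[z - \|q_k\alpha\|, z)$. Since $R$ is an isometry, $R^j(0) = R^j(x) - x$ lies in $(z - \|q_k\alpha\|,\, z + \|q_k\alpha\|)$. On the torus $\hat{X}$ this is precisely a vertical trajectory of length $j$ from the marked point $0$ to a $\|q_k\alpha\|$-neighborhood of the marked point $z$, so (A4) gives $j \ge q_k/C_4 \ge c_2 q_k$ by our choice $c_2 \le 1/C_4$.

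For the upper bound $j < q_{k+1}+q_k - c_2 q_k$, set $j' := q_{k+1}+q_k - j$. Using $\|(q_{k+1}+q_k)\alpha\| = \|q_k\alpha\| - \|q_{k+1}\alpha\| < \|q_k\alpha\|$, which follows because $q_k\alpha - p_k$ and $q_{k+1}\alpha - p_{k+1}$ have opposite signs, a direct computation gives $R^{j'}(z) \equiv (z - j\alpha) + (q_{k+1}+q_k)\alpha \pmod{1}$, which places $R^{j'}(z)$ within $2\|q_k\alpha\|$ of $0$. A direct application of (A4) at level $k$ is not quite sufficient due to this factor of $2$, so we apply (A4) instead at a lower level $k - s$, where $s \in \natls$ is fixed so that $\|q_{k-s}\alpha\| \ge 2\|q_k\alpha\|$; such an $s$ exists uniformly in $k$ because assumptions (A1)-(A3) bound $a_k, a_{k+1}, a_{k+2}$ and hence control the ratios of consecutive $\|q_\bullet \alpha\|$. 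This yields $j' \ge q_{k-s}/C_4$, and by Lemma~\ref{lemma:q:k:plus:1:ak:qk} combined with (A1)-(A3) the ratio $q_{k-s}/q_k$ is bounded below by a constant depending only on $C_1,\dots,C_4$; shrinking $c_2$ accordingly gives $j' \ge c_2 q_k$.

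Finally, the case split: if $j \le q_{k+1} - 1$ then $j - c_2 q_k > 0$ and $j + q_k < q_{k+1}+q_k$, so every integer in $(j - c_2 q_k,\, j + q_k)$ belongs to $\{1,\dots,q_{k+1}+q_k-1\}$, proving \eqref{eq:j:small}; otherwise $j \ge q_{k+1} > q_k$, and $(j - q_k,\, j + c_2 q_k)$ likewise sits in range, giving \eqref{eq:j:big}. The two cases overlap on $(q_k, q_{k+1})$, so together they cover the entire range $(c_2 q_k,\, q_{k+1}+q_k - c_2 q_k)$. The main obstacle is the upper bound step: the factor of $2$ in the distance estimate forces a bounded descent in the continued-fraction expansion, and it is here that (A1)-(A3) enter alongside (A4).
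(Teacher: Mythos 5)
Your proof plan — bound $j$ strictly inside $(c_2 q_k,\, q_{k+1}+q_k-c_2 q_k)$ and do a case split — is exactly the paper's. The lower bound is the same as the paper's argument. The issue is the upper bound.

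You correctly observe that with $j' = q_{k+1}+q_k - j$ the point $R^{j'}(z)$ is only guaranteed to be within $2\|q_k\alpha\|$ of $0$, so (A4) at level $k$ does not apply directly. But your workaround — applying (A4) at level $k-s$ — is not available: (A4) is a hypothesis that is only assumed for the distinguished indices $k = k_i$, not for all (or even all smaller) levels, so there is no version of (A4) at $k-s$ to invoke. (A secondary quibble: the existence of such an $s$ has nothing to do with (A1)-(A3), which give \emph{upper} bounds on $a_\bullet$ and hence do not control the ratios $\|q_{k-s}\alpha\|/\|q_k\alpha\|$ from below; in fact $s=2$ always works since $\|q_{k-2}\alpha\| = a_k\|q_{k-1}\alpha\| + \|q_k\alpha\| > 2\|q_k\alpha\|$.)

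The paper dodges the factor-of-$2$ problem altogether. It applies (A4) in the \emph{other direction} — for a vertical trajectory starting at $z$ and landing near $0$ — to the exponent $i = \tau - j$, where $\tau \in \{q_{k+1},\, q_{k+1}+q_k\}$ is the first return time of the specific point $x$ to $[-\|q_k\alpha\|, 0)$ (rather than to the maximal possible return time $q_{k+1}+q_k$). Then $R^i(R^j x) = R^\tau x \in [-\|q_k\alpha\|, 0)$ and $R^j x \in [z-\|q_k\alpha\|, z)$, so $R^i(z) = R^i(R^j x) + (z - R^j x)$ with the first summand in $[-\|q_k\alpha\|, 0)$ and the second in $(0, \|q_k\alpha\|]$: the two shifts lie on opposite sides of zero and do not compound, giving $|R^i(z)| < \|q_k\alpha\|$. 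Thus (A4) at level $k$ applies directly, yielding $\tau - j \ge q_k/C_4$ and hence $j \le q_{k+1}+q_k - q_k/C_4$, with no descent needed. This is the step your proposal is missing, and fixing your argument essentially amounts to switching from $q_{k+1}+q_k - j$ to $\tau - j$.
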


\begin{proof}[Proof of Claim~\ref{claim:choose:E0}]
Note that by (A4), for $0<i<q_k/C_4$,
\begin{equation}
\label{eq:first:comment}
\text{ if $x \in [-\|q_k\alpha\|,0)$ then $R^i x \not\in [z-\|q_k\alpha\|,z)$.}
\end{equation}
Therefore, $j > q_k/C_4 \ge c_2 q_k$. 
Similarly, for $0<i<q_k/C_4$,
\begin{equation}
\label{eq:first:comment:prime}
\text{if 
$x \in [z-\|q_k\alpha\|,z)$ then $R^i x \not\in [-\|q_k\alpha\|,0)$.}
\end{equation}
Therefore, $j < q_{k+1} + q_k - \frac{q_k}{C_4} \le q_{k+1} + q_k -
c_2 q_k$. 

Now if $j < q_{k+1}$ then (\ref{eq:j:small}) holds, and if $j > q_k$
then (\ref{eq:j:big}) holds. This completes the proof of Claim~\ref{claim:choose:E0}. 
\end{proof}

\begin{claim}
\label{claim:E0:works}
Suppose (\ref{eq:j:small}) holds and $\ell \in (j - c_2 q_k, j+q_k)$
or (\ref{eq:j:big}) holds and $\ell \in (j-q_k, j+c_2 q_k)$. Also
assume that $\ell \ne j$. Then, 
\begin{displaymath}
R^\ell I \cap (([-\|q_k\alpha\|,0) \cup [z-\|q_k\alpha\|,z)) = \emptyset.
\end{displaymath}
\end{claim}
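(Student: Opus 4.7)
The plan is to show that both possibilities for a non-empty intersection lead to a contradiction: either $R^\ell I$ meeting $[z-\|q_k\alpha\|, z)$, or $R^\ell I$ meeting $[-\|q_k\alpha\|, 0)$. The key inputs are (i) the two-sided inclusion $I \subset [-\|q_{k+1}\alpha\|, 0)$ together with $R^j I \subset [z-\|q_k\alpha\|, z)$ coming from the construction, and (ii) Lemma~\ref{lemma:ret:time}, which controls the possible return times for any interval of length $\|q_k\alpha\|$. In both subcases of the hypothesis, the window for $\ell$ is contained in $(1, q_{k+1}+q_k - 1)$, so we always have $0 < \ell < q_{k+1}+q_k$ and $|\ell - j| < q_k$.

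For the first case, suppose $R^\ell I \cap [z-\|q_k\alpha\|, z) \neq \emptyset$ and choose $x \in I$ with $R^\ell x \in [z-\|q_k\alpha\|, z)$. Since also $R^j x \in [z-\|q_k\alpha\|, z)$, the number $|\ell - j|$ is a return time of the point $R^{\min(\ell,j)}x$ to the interval $[z-\|q_k\alpha\|, z)$, which has length $\|q_k\alpha\|$. Lemma~\ref{lemma:ret:time} says any such return time lies in $\{q_{k+1}, q_{k+1}+q_k\}$; in particular $|\ell - j| \ge q_{k+1} > q_k$, contradicting $|\ell - j| < q_k$ together with $\ell \neq j$.

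For the second case, suppose $R^\ell I \cap [-\|q_k\alpha\|, 0) \neq \emptyset$, and pick $x \in I$ with $R^\ell x \in [-\|q_k\alpha\|, 0)$. Because $x$ itself already lies in $[-\|q_k\alpha\|, 0)$, the integer $\ell > 0$ is a return time of $x$ to $[-\|q_k\alpha\|, 0)$. However $I$ was chosen inside the \emph{smaller} subinterval $[-\|q_{k+1}\alpha\|, 0)$, and Lemma~\ref{lemma:ret:time} specifies that, on this subinterval, the first return time to $[-\|q_k\alpha\|, 0)$ equals $q_{k+1}+q_k$ (rather than $q_{k+1}$). Consequently $\ell \geq q_{k+1}+q_k$, contradicting the upper bound $\ell < q_{k+1}+q_k$.

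The whole argument is in essence careful bookkeeping with Lemma~\ref{lemma:ret:time}; the main subtlety worth flagging is that one must really use the finer inclusion $I \subset [-\|q_{k+1}\alpha\|, 0)$, not just $I \subset [-\|q_k\alpha\|, 0)$, in order to rule out the ``short'' return time $q_{k+1}$ in Case 2. The parity of $k$ only flips orientations and the author's convention already accounts for this, so the proof works uniformly for $k$ odd and $k$ even.
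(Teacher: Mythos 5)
Your argument is correct and follows essentially the same route as the paper: the $[-\|q_k\alpha\|,0)$ part uses the inclusion $I \subset [-\|q_{k+1}\alpha\|,0)$ to force the return time $q_{k+1}+q_k$ (ruling out $\ell \le q_{k+1}+q_k-1$), and the $[z-\|q_k\alpha\|,z)$ part uses $R^jI \subset [z-\|q_k\alpha\|,z)$ together with the lower bound $q_{k+1}>q_k$ on return times to an interval of length $\|q_k\alpha\|$, exactly as in the paper's proof of Claim~\ref{claim:E0:works}, merely phrased as a contradiction with chosen points rather than as set-level statements.
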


\begin{proof}[Proof of Claim~\ref{claim:E0:works}]
Recall that $I \subset [-\|q_{k+1} \alpha \|,0)$ and thus by
Lemma~\ref{lemma:ret:time}, 
\begin{displaymath}
R^\ell I \cap [-\|q_k\alpha\|,0) = \emptyset \qquad \text{ for $1 \le
  \ell \le q_{k+1} + q_k - 1$.}
\end{displaymath}
Also, by Lemma~\ref{lemma:ret:time}, the return time of any point in
the interval $[z-\|q_k\alpha\|,z)$ to itself is at least $q_{k+1} >
q_k$. Thus, for $\ell$ such that $|\ell - j| < q_k$, 
\begin{displaymath}
R^\ell I \cap [z-\|q_k\alpha\|,z) = \emptyset.
\end{displaymath}
Claim~\ref{claim:E0:works} follows. 
\end{proof}

We now continue the proof of Lemma~\ref{lemma:first:sum}. Let $r =
\min(c_2 q_k, (m-m') w_k)$. Recall that $(m-m')w_k < q_k$. 
If (\ref{eq:j:small}) holds, let
\begin{displaymath}
E = (j-r, j-r+c_2 q_k), \qquad \text{ so that $E+[0,(m-m')w_k) \subset (j-c_2 q_k, j+q_k)$.}
\end{displaymath}
If (\ref{eq:j:big}) holds, let
\begin{multline*}
E = (j-(m-m')w_k, j-(m-m')w_k + c_2 q_k) \\ \text{ so that
  $E+[0,(m-m')w_k) \subset (j- q_k, j+c_2 q_k)$.}
\end{multline*}
Then, for all $i \in E$, 
\begin{displaymath}
i \le j \le i+(m-m')w_k.
\end{displaymath}
Hence, by Claim~\ref{claim:E0:works}, for all $x \in I$ and for all $i
\in E$, 
\begin{equation}
\label{eq:tmp:Rellix}
|\{R^{i+\ell} x\}_{\ell=1}^{(m-m')w_k}\cap ([-\|q_k\alpha\|,0)\cup
[z-\|q_k\alpha\|,z))|=1.
\end{equation}
(the only contribution is from the case where $i + \ell = j$). 
Therefore, in view of (\ref{eq:sum:rearrangement}) and
(\ref{eq:difference:non:zero}), for $x \in R^{-m' w_k} I$ and $\ell \in E$, 
(\ref{eq:E:property}) holds. 

From the definition, $|E| \ge c_2 q_k$. We now estimate $\lambda(I)$. 
By Lemma~\ref{lemma:q:k:plus:1:ak:qk}, 
\begin{displaymath}
q_{k+2} = a_{k+2} q_{k+1} + q_{k} < (a_{k+2}+1) q_{k+1}.
\end{displaymath}
\begin{displaymath}
q_{k+1} = a_{k+1} q_k + q_{k-1} < (a_{k+1} + 1) q_{k}
\end{displaymath}
By Lemma~\ref{lemma:good:bound},
\begin{multline*}
\lambda(I) \ge \frac{1}{4} \|q_{k+1} \alpha\| \ge \frac{1}{4}
\frac{1}{q_{k+2} + q_{k+1}} \ge \frac{1}{4} \frac{1}{(a_{k+2} +
  2)q_{k+1}} \ge \\ \ge \frac{1}{4} \frac{1}{(a_{k+2} + 2)(a_{k+1}+1) q_k}
\ge \frac{1}{4} \frac{a_{k+1}}{(a_{k+2} + 2)(a_{k+1}+1)}\|q_k \alpha\|.
\end{multline*}
Thus, by (A3),
\begin{displaymath}
\lambda(I) \ge \frac{1}{8(C_3+2)} \|q_k \alpha \|.
\end{displaymath}
This completes the proof of Lemma~\ref{lemma:first:sum}. 
\end{proof}


\subsection{Proof of Proposition \ref{prop:technical:cond3} from
  Lemma~\ref{lemma:first:sum}}
\label{sec:subsec:proof:of:prop:technical}

Recall $w_k =\lfloor \frac{r_k}{\lambda(J)} \rfloor$ as above. 

\begin{cor}(Corollary to Lemma \ref{lemma:first:sum}) Given $w_k$ so
  that $q_k <(m-m')w_k<q_{k+1}$ as before there exists $\hat{m}\in
  \{m,m'\}$ and a set $A_k$ with $\lambda(A_k) \geq \tilde{c}$
  (depending on our non-divergence condition, that is
  $C_1,...,C_4,C'$) so that for all $x \in A_k$ we
  have 
\begin{equation}
\label{eq:cor:first:sum}
\sum_{i=0}^{\hat{m}w_k-1}\chi_J(R^ix)-\sum_{i=0}^{\hat{m}w_k-1}\chi_J(R^{i+q_k}x)=d\in \{-3,-2,-1,1,2,3\}.
\end{equation}
\end{cor}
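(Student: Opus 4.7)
The plan is to bootstrap from the conclusion of Lemma~\ref{lemma:first:sum}, which guarantees that the ``difference'' $G_m(x)-G_{m'}(x)$ is odd (namely $\pm 1$) on a set of definite measure, to the stronger conclusion that one of $G_m$ or $G_{m'}$ alone equals a nonzero value $d$ with $|d|\le 3$ on a set of definite measure; here I write
$$G_M(x) := \sum_{i=0}^{Mw_k-1}\chi_J(R^ix) - \sum_{i=0}^{Mw_k-1}\chi_J(R^{i+q_k}x).$$

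First I would invoke Lemma~\ref{lemma:first:sum} to produce the interval $I\subset[-\|q_{k+1}\alpha\|,0)$ with $|I|\ge C'\|q_k\alpha\|$ and the index set $E$ of size $\ge c_2 q_k$ with the property that $G_m(x)-G_{m'}(x)\in\{\pm1\}$ for every $x$ in $U:=\bigcup_{i\in E}R^iI$. To lower bound $\lambda(U)$, note that $|I|\le\|q_{k+1}\alpha\|<\|q_{k-1}\alpha\|$, while by Lemma~\ref{lemma:orbit:sep} the orbit $\{R^{-i}y\}_{i=0}^{q_k-1}$ is $\|q_{k-1}\alpha\|$-separated. Hence for any $y$ at most two indices $i\in E\subset\{0,\dots,q_k-1\}$ satisfy $R^{-i}y\in I$, so
$$\lambda(U)\;\ge\;\tfrac12|E|\cdot|I|\;\ge\;\tfrac12 c_2q_k\cdot C'\|q_k\alpha\|\;\ge\;\frac{c_2 C'}{2(C_2+2)},$$
using $q_k\|q_k\alpha\|\ge 1/(a_{k+1}+2)$ from Lemmas~\ref{lemma:q:k:plus:1:ak:qk}--\ref{lemma:good:bound} together with (A2).

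Next I would obtain an a priori bound on $G_m$ and $G_{m'}$. Write $S_N(x)=\sum_{i=0}^{N-1}\chi_J(R^ix)$; the telescoping identity
$$G_M(x)\;=\;S_{q_k}(x)-S_{q_k}(R^{Mw_k}x)$$
(which holds unconditionally by splitting the two sums at $q_k$) combined with the Denjoy--Koksma inequality (Lemma~\ref{lemma:DK}), applied to the characteristic function of $J$ (variation $2$), shows that each term lies within $2$ of $q_k\lambda(J)$. Therefore $G_m(x),G_{m'}(x)\in\{-4,-3,\dots,3,4\}$ for every $x$.

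Finally, I would combine these facts by pigeonholing. On $U$ the pair $(G_m(x),G_{m'}(x))$ takes at most $16$ values, namely pairs in $\{-4,\dots,4\}^2$ with difference $\pm1$. Some level set $A_k=\{x\in U:G_m(x)=d_1,\,G_{m'}(x)=d_2\}$ therefore has measure $\ge\lambda(U)/16$, which is the desired constant $\tilde c$. Because $|d_1-d_2|=1$ and elements of $\{-4,0,4\}$ are mutually separated by at least $4$, at least one of $d_1,d_2$ must lie in $\{-3,-2,-1,1,2,3\}$. Setting $\hat m=m$, $d=d_1$ if $d_1$ does (and otherwise $\hat m=m'$, $d=d_2$) completes the proof. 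The only place demanding care is the multiplicity estimate in the measure bound for $U$; once that is in hand, the rest is elementary counting.
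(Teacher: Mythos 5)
Your argument follows the paper's own route almost exactly: you use the $\pm 1$ conclusion of Lemma~\ref{lemma:first:sum} on $U=\bigcup_{i\in E}R^iI$, the telescoping identity $\psi_s(x)-\psi_s(R^{q_k}x)=\psi_{q_k}(x)-\psi_{q_k}(R^sx)$ together with Denjoy--Koksma to get $|G_m|,|G_{m'}|\le4$, observe that a pair of integers in $\{-4,\dots,4\}$ differing by $1$ cannot both lie in $\{-4,0,4\}$, and then pigeonhole over the finitely many possible values $(\hat m, d)$. The one place you are more explicit than the paper is the lower bound for $\lambda(U)$, where you invoke Lemma~\ref{lemma:orbit:sep} to control overlaps of the translates $R^iI$ (in fact since $|I|\le\|q_{k+1}\alpha\|<\|q_{k-1}\alpha\|$ the translates over the window $E$ are pairwise disjoint, so your factor of $\tfrac12$ is not even needed); the paper simply asserts $\lambda(A_k)\ge\tfrac1{12}|H_x|\lambda(I')$ without comment. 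Your constant $\tilde c$ comes out slightly different (you divide by $16$ rather than $12$) but that is immaterial.
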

\begin{proof}
Lemma \ref{lemma:first:sum} establishes that
there exists $\bar{c} > 0$ and $\bar{c}_1 > 0$, 
and for an infinite sequence of $k  \in \natls$ there exists an interval 
$I' \subset [0,1]$ with $\lambda(I')>\bar{c}\|q_k \alpha\|$ so that
for any $x\in I'$ there exists $H_x \subset \{0,1, \dots ,q_k-1\}$ 
with $|H_x|>\bar{c}_1 q_k$ so that  for any $x \in I'$ and any $\ell \in
H_x$, and any $w_k$ with $(m-m')w_k \in [q_{k-1},q_k)$ we have
\begin{multline}
\label{eq:the:main:sum}
\sum_{i=0}^{mw_k-1}(\chi_J(R^i R^\ell x)-\chi_J(R^{i+q_k}R^\ell
x)) \\ - \sum_{i=0}^{m'w_k-1}(\chi_J(R^i R^\ell
x)-\chi_J(R^{i+q_k}R^\ell x)) \in \{-1,1\}.
\end{multline}

Also note that by Lemma \ref{lemma:DK}, $\psi_{q_k}$ takes at most 5
values (which are also consecutive) and so for any $s \in \natls$ and
any $x$ we have
\begin{equation}
\label{eq:rearrange:psi}
\psi_s(x)-\psi_s(R^{q_k}x)=\psi_{q_k}(x)-\psi_{q_k}(R^s x)\in \{-4,\dots, 4\}. 
\end{equation}
Note that the left-hand-side of (\ref{eq:the:main:sum}) is
\begin{displaymath}
(\psi_{mw_k}(R^\ell x) - \psi_{m w_k}(R^{q_k} R^\ell x)) - 
(\psi_{m'w_k}(R^\ell x) - \psi_{m' w_k}(R^{q_k} R^\ell x)) \equiv
S_1(R^\ell x) - S_2(R^\ell x).  
\end{displaymath}
By (\ref{eq:the:main:sum}), for all $x \in I'$ and for all $\ell \in
H_x$, $S_1(R^\ell x) - S_2(R^\ell x) 
\in \{-1,1\}$, and by (\ref{eq:rearrange:psi}),
we have $|S_1(R^\ell x)| \le 4$, and $|S_2(R^\ell x)| \le 4$. It follows
that for all $x \in I'$ and all $\ell \in H_x$, 
\begin{displaymath}
S_i(R^\ell x) \in \{-3,-2,-1,1,2,3\} \qquad\text{ for some $i \in \{1,2\}$.}
\end{displaymath}
Thus, there exists $\hat{m}\in \{m,m'\}$ and $d \in \{-3,-2,-1,1,2,3\}$ and 
a set $A_k$ with
\begin{displaymath}
\lambda(A_k) \ge \frac{1}{12} |H_x| \lambda(I') =  \frac{\bar{c}\bar{c}_1 q_k
  \|q_k\alpha\|}{12},
\end{displaymath}
so that for for $x \in A_k$ (\ref{eq:cor:first:sum}) holds. 
\end{proof}
We frequently use the following trivial result in this section.

\begin{lemma}\label{lem:size bound} If $\lambda(B)\geq \gamma$ and $B$ is the union of at most $\ell$ intervals then there exists $B'\subset B$ with $\lambda(B')\geq\frac 1 2 \lambda(B)$ and $B'$ is the union of intervals of size at least $\frac{\gamma}{2\ell}$. 
\end{lemma}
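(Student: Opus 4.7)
The plan is to simply discard the intervals that are too small. Write $B = \bigsqcup_{i=1}^{k} I_i$ as a disjoint union of intervals with $k \le \ell$ (if a presentation of $B$ as a union of intervals has overlapping or adjacent pieces, first merge them, which only decreases the number of intervals and hence keeps $k \le \ell$).

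Let $B' = \bigsqcup_{i \in S} I_i$ where $S = \{ i : \lambda(I_i) \ge \gamma/(2\ell) \}$. Then by construction $B'$ is a union of intervals each of size at least $\gamma/(2\ell)$, as required. For the measure bound, observe that the discarded pieces satisfy
\begin{displaymath}
\lambda(B \setminus B') = \sum_{i \notin S} \lambda(I_i) < \ell \cdot \frac{\gamma}{2\ell} = \frac{\gamma}{2} \le \frac{\lambda(B)}{2},
\end{displaymath}
using $\gamma \le \lambda(B)$ in the last step. Hence $\lambda(B') = \lambda(B) - \lambda(B \setminus B') \ge \tfrac{1}{2}\lambda(B)$, which completes the proof.

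There is no real obstacle here: the statement is a pigeonhole-type observation, and the only mild care needed is to bound the discarded mass by $\gamma/2$ rather than $\lambda(B)/2$ so that the threshold $\gamma/(2\ell)$ (which is what the later applications want, since it depends only on the a priori lower bound $\gamma$ and the combinatorial complexity $\ell$) is achievable uniformly.
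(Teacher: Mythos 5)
Your proof is correct and is exactly the intended argument; the paper states this lemma without proof, calling it a "trivial result," and the discard-the-small-intervals pigeonhole you give is the natural justification.
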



\begin{lemma} There exists $A_k'\subset A_k$ with $\lambda(A_k')>\frac{1}{2} \lambda(A_k)$ and so that $A_k'$ is made of at most $4q_k+1$ intervals with length at least $\frac {\tilde{c}}2 \frac 1 {4q_k+1}$.
\end{lemma}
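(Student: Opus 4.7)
The plan is to realize $A_k$ as a level set of a piecewise constant function with a controlled number of pieces, and then apply Lemma~\ref{lem:size bound} directly.

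First, I would rewrite the defining condition of $A_k$. By the preceding corollary, $x \in A_k$ means
\begin{displaymath}
\sum_{i=0}^{\hat{m}w_k-1}\chi_J(R^ix) - \sum_{i=0}^{\hat{m}w_k-1}\chi_J(R^{i+q_k}x) = d,
\end{displaymath}
i.e.\ $\psi_{\hat{m}w_k}(x) - \psi_{\hat{m}w_k}(R^{q_k}x) = d$. By the rearrangement identity (\ref{eq:rearrange:psi}) (the telescoping used to justify that line), this quantity equals $\psi_{q_k}(x) - \psi_{q_k}(R^{\hat{m}w_k}x)$. So
\begin{displaymath}
A_k = \bigl\{x \in [0,1) : \psi_{q_k}(x) - \psi_{q_k}(R^{\hat{m}w_k}x) = d \bigr\}.
\end{displaymath}

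Next, I would bound the number of intervals of constancy. The function $\psi_{q_k}(x) = \sum_{i=0}^{q_k-1}\chi_J(R^ix)$ is a sum of $q_k$ indicator functions of the arcs $R^{-i}J = [-i\alpha, z-i\alpha) \pmod 1$, so on the circle $[0,1)$ it is constant outside the $2q_k$ points $\{-i\alpha\} \cup \{z-i\alpha\}$ for $0 \le i < q_k$, i.e.\ it has at most $2q_k$ arcs of constancy. The function $x \mapsto \psi_{q_k}(R^{\hat{m}w_k}x)$ is a rotation of this one, so it likewise has at most $2q_k$ discontinuity points. Their difference is therefore constant on at most $4q_k$ arcs of the circle, which yields at most $4q_k + 1$ intervals of constancy when $[0,1)$ is cut at $0$. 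In particular $A_k$, being the preimage of $\{d\}$, is the union of at most $4q_k + 1$ intervals.

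Finally, I would invoke Lemma~\ref{lem:size bound} with $B = A_k$, $\ell = 4q_k+1$, and $\gamma = \tilde{c}$ (which is a valid lower bound since the previous corollary gives $\lambda(A_k) \ge \tilde{c}$). This produces $A_k' \subset A_k$ with $\lambda(A_k') \ge \tfrac{1}{2}\lambda(A_k)$ which is a union of intervals each of length at least
\begin{displaymath}
\frac{\gamma}{2\ell} \;=\; \frac{\tilde{c}}{2(4q_k+1)} \;=\; \frac{\tilde{c}}{2}\cdot\frac{1}{4q_k+1},
\end{displaymath}
which is precisely the conclusion. There is no real obstacle here; the only thing to be careful about is correctly counting the discontinuity points of $\psi_{q_k}$ and noticing that the combined function has at most $4q_k+1$ pieces on $[0,1)$ so that the hypothesis of Lemma~\ref{lem:size bound} applies with the stated $\ell$.
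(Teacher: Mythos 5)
Your argument is correct and is essentially the paper's own proof: both identify $A_k$ as a level set of $\psi_{q_k}(x)-\psi_{q_k}(R^{\hat{m}w_k}x)$ via the same rearrangement, count at most $4q_k$ discontinuities so that the level set is a union of at most $4q_k+1$ intervals, and then apply Lemma~\ref{lem:size bound} with $\gamma=\tilde{c}$ and $\ell=4q_k+1$.
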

\begin{proof}  Recall that $A_k$ is a level set of $$\sum_{i=0}^{\hat{m}w_i-1}\chi_J(R^ix)-\sum_{i=0}^{\hat{m}w_i-1}\chi_J(R^iR^{q_k}x)=\sum_{i=0}^{q_k-1}\chi_J(R^ix)-\sum_{i=\hat{m}w_i}^{\hat{m}w_i +q_k -1}\chi_J(R^ix),$$ a function which has at most $4q_k$ discontinuities.
The lemma follows from Lemma \ref{lem:size bound} since this implies that any level set is made of at most $4q_k+1$ intervals. 
\end{proof}


In the previous results we have proved properties of a level set of $\sum_{i=0}^{\hat{m}w_k-1}\chi_J(R^ix)-\sum_{i=0}^{\hat{m}w_k-1}\chi_J(R^{i+q_k}x)$. In this lemma we relate that to proving nice properties about a set, $G_\ell=\{x:S^{\hat{m}n}x=R^\ell x\}$ for some $\ell$,  to obtain the set $\tilde{A}$ in Proposition \ref{prop:technical:cond3}.

\begin{lemma} 
\label{lemma:Ak:twoprime} 
For all large enough $k$ there exists $\hat{m} \in \{m,m'\}, \, d\in \{-3,-2,-1,1,2,3\}$ and a set $\tilde{A}_k$ with $\lambda(\tilde{A}_k)> \frac{\tilde{c}}4$ and which is the union of at most $8q_k$ intervals  of size at
  least $\frac {\tilde{c}}2 \frac 1 {8\cdot 4q_k}$ so that for $x \in \tilde{A}_k$, 
\begin{equation}
\label{eq:Ak:twoprime}
R^{q_k}S^{\hat{m}r_k}(x)=S^d S^{\hat{m}r_k}(R^{q_k}x). 
\end{equation}
\end{lemma}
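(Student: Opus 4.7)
The plan is to derive $\tilde{A}_k$ from the set $A_k'$ produced by the preceding lemma in three steps: (a) translate the combinatorial identity that defines $A_k'$ into a dynamical identity comparing $R^{q_k}S^{\hat{m}r_k}x$ with $S^{\hat{m}r_k}R^{q_k}x$; (b) isolate the subset of $A_k'$ on which this dynamical identity holds with a uniform shift $d$; and (c) invoke Lemma~\ref{lem:size bound} to recover the interval-size bound. Recall that on $A_k'$ we have $\psi_N(x)-\psi_N(R^{q_k}x)=d_0$ for fixed $\hat{m}\in\{m,m'\}$ and $d_0\in\{-3,\ldots,3\}\setminus\{0\}$, where $N:=\hat{m}w_k$, and $A_k'$ is a union of at most $4q_k+1$ intervals of size $\geq \tilde{c}/(2(4q_k+1))$.

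For step (a), let $M_x$ and $M_x'$ denote the $\hat{m}r_k$-th return times of $x$ and $R^{q_k}x$ to $J$ under $R$, so that $S^{\hat{m}r_k}x=R^{M_x}x$ and $S^{\hat{m}r_k}R^{q_k}x=R^{M_x'+q_k}x$. Then
\begin{displaymath}
R^{q_k}S^{\hat{m}r_k}x\;=\;R^{M_x-M_x'}\bigl(S^{\hat{m}r_k}R^{q_k}x\bigr),
\end{displaymath}
and, provided $R^{q_k}(S^{\hat{m}r_k}x)\in J$ (which fails only on a set of measure $O(\|q_k\alpha\|)$), the target equation $R^{q_k}S^{\hat{m}r_k}x=S^d S^{\hat{m}r_k}R^{q_k}x$ is equivalent to $d$ being the signed count of $J$-returns of the $R$-orbit of $S^{\hat{m}r_k}R^{q_k}x$ over its first $|M_x-M_x'|$ iterates. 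The rearrangement identity $\psi_a(R^by)-\psi_a(y)=\psi_b(R^ay)-\psi_b(y)$, applied with $y=R^{M_x'+q_k}x$ and telescoped via $\psi_{M_x+q_k}(x)=\hat{m}r_k+\psi_{q_k}(S^{\hat{m}r_k}x)$ and $\psi_{M_x'+q_k}(x)=\psi_{q_k}(x)+\hat{m}r_k$, re-expresses this count as
\begin{displaymath}
\delta(x)\;:=\;\psi_{q_k}\bigl(S^{\hat{m}r_k}x\bigr)-\psi_{q_k}(x).
\end{displaymath}

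For step (b), I would show that $\delta(x)=-d_0$ on most of $A_k'$. Applying the same rearrangement at $M=N$ rewrites the defining property of $A_k'$ as $\psi_{q_k}(R^Nx)=\psi_{q_k}(x)-d_0$, while at $M=M_x$ it yields $\delta(x)=\psi_{M_x}(R^{q_k}x)-\hat{m}r_k$; subtracting gives
\begin{displaymath}
\delta(x)+d_0\;=\;\psi_{q_k}\bigl(R^{M_x}x\bigr)-\psi_{q_k}\bigl(R^{N}x\bigr).
\end{displaymath}
By Lemma~\ref{lemma:DK} both $\psi_{q_k}$-values lie in the five-element range $\{\lfloor q_k\lambda(J)\rfloor-2,\ldots,\lfloor q_k\lambda(J)\rfloor+2\}$, and they coincide whenever the orbit segment $\{R^\ell x:\min(M_x,N)\le\ell<\max(M_x,N)\}$ avoids the boundary set $J\triangle R^{-q_k}J$ of measure $2\|q_k\alpha\|$. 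Using the Denjoy-Koksma estimate together with assumptions (A0)-(A3) to bound the Ostrowski digit sum of $N$, one controls $|M_x-N|$ uniformly in $x\in A_k'$ by a constant depending only on $C_1,\ldots,C_4$; hence the exceptional set has measure $O(\|q_k\alpha\|)=o(1)$ as $k\to\infty$. Consequently $A_k^*:=\{x\in A_k':\delta(x)=-d_0\}$ satisfies $\lambda(A_k^*)>\tilde{c}/2-o(1)$ and is a union of $O(q_k)$ intervals, since the additional cuts are preimages under $R^j$ (with $j$ in a bounded range) of the endpoints of $J$.

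For step (c), apply Lemma~\ref{lem:size bound} to $A_k^*$ with $\gamma=\lambda(A_k^*)\geq\tilde{c}/2-o(1)$ and $\ell=O(q_k)$ equal to the interval count, producing $\tilde{A}_k\subset A_k^*$ with $\lambda(\tilde{A}_k)>\tilde{c}/4$ consisting of at most $8q_k$ intervals each of size $\geq\tilde{c}/(64q_k)$, after absorbing the $o(1)$ terms into constants for $k$ large. Setting $d=-d_0\in\{-3,-2,-1,1,2,3\}$, the identity (\ref{eq:Ak:twoprime}) follows. The principal obstacle is in step (b): since $N=\hat{m}w_k$ may substantially exceed $q_k$, the required uniform bound on $|M_x-N|$ rests on controlling the Denjoy-Koksma fluctuation over an Ostrowski expansion that can extend past index $k$, so one needs the bounded-type hypotheses to apply to a window large enough to cover this expansion; the remaining boundary-measure argument is then routine.
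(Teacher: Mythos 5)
Your overall architecture is the same as the paper's: compare the rotation time $M_x$ realizing $S^{\hat{m}r_k}x=R^{M_x}x$ with the model time $N=\hat{m}w_k$, observe that the two difference sums (equivalently your $\psi_{q_k}(R^{M_x}x)$ and $\psi_{q_k}(R^{N}x)$) agree unless the orbit segment between the two times meets the two intervals of $J\triangle R^{-q_k}J$, each of width $\|q_k\alpha\|$, excise the corresponding preimages from $A_k'$, regain the interval structure via Lemma~\ref{lem:size bound}, and then convert the combinatorial identity into (\ref{eq:Ak:twoprime}) through the time change (\ref{eq:time:change:general}). The rearrangement algebra in your steps (a)--(b) and the identification $d=-d_0$ are consistent with the paper's derivation.

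The gap is in your key quantitative claim in step (b): that $|M_x-N|$ is bounded uniformly by a constant depending only on $C_1,\dots,C_4$, with exceptional measure $O(\|q_k\alpha\|)$. This is not justified and is false in general. Writing $N\approx \hat{m}w_k$ ($\le Dq_k$) in Ostrowski form and applying Denjoy--Koksma blockwise gives a fluctuation of order $\sum_{i\le b-1}a_i+N/q_{b-1}$ with $q_b$ comparable to $q_k$, and the hypotheses (A0)--(A3) bound only the three partial quotients $a_{k_i},a_{k_i+1},a_{k_i+2}$ at the selected scales, not the whole initial string $a_1,\dots,a_k$; so your proposed remedy of ``bounded-type hypotheses over a window covering the expansion'' is simply not available under (A0)--(A9). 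The paper's resolution needs no constant bound: by Lemma~\ref{lem:sum bound}, valid for every irrational $\alpha$, one has $\sum_{i\le k}a_i=o(q_k)$, hence $|M_x-\hat{m}w_k|=o(q_k)$ uniformly in $x$ (in particular $<\frac{\tilde{c}}{32}q_k$ for $k$ large), and removing the resulting $o(q_k)$ preimages of two intervals of width $\|q_k\alpha\|$ costs measure at most $o(q_k)\cdot\|q_k\alpha\|\le\frac{\tilde{c}}{16}$, which still leaves $\lambda(\tilde{A}_k)>\frac{\tilde{c}}{4}$ and at most $8q_k$ intervals after Lemma~\ref{lem:size bound}. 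If you replace your constant bound and the $O(\|q_k\alpha\|)$ estimate by this $o(q_k)$/$o(1)$ version, your argument closes and coincides with the paper's proof.
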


\begin{proof}
By Lemma~\ref{lemma:DK}, for all $h,j \in
\natls$, and any $x \in [0,1]$, 
\begin{equation}
\label{eq:iterated:djk}
\left|-h q_j \lambda(J)+ \sum_{i=0}^{h q_j-1} \chi_J(R^i x)\right|  \le 2h.
\end{equation}
Let $0 < N < q_b$ be a positive integer, and write 
\begin{displaymath}
N = \sum_{i=0}^{b-1} h_i q_i, \qquad\text{ where $h_i \in \zed$, $0 \le h_i \le a_{i+1}$ and $h_{b-1}<\frac{N}{q_{b-1}}$.}
\end{displaymath}
Let $D_0 = 0$, and for $0 < j \le b$, let 
\begin{displaymath}
D_j = \sum_{i=0}^{j} h_i q_i,  \qquad\text{where $q_0 = 1$.}
\end{displaymath}
Note that $D_b = N$. Then, by (\ref{eq:iterated:djk}), 
for $x \in [0,1]$, 
\begin{align} \label{eq:bounding}
\left| - N \lambda(J)+\sum_{i=0}^{N-1}\chi_J(R^i x)\right| & = \notag
\left| \sum_{j=0}^{b-1} \left( -(D_{j+1}-D_j) \lambda(J) + \sum_{i =
      D_j}^{D_{j+1}-1} \chi_J(R^i x) \right) \right| \notag \\
& = \left| \sum_{j=0}^{b-1} \left( -h_j q_j \lambda(J) + \sum_{i
      = 0}^{h_j q_j-1} \chi_J(R^i R^{D_j} x) \right) \right| \notag \\
& \le 2 \sum_{j=0}^{b-2}  a_{j+1}+\frac{N}{q_{b-1}} \qquad\text{ by (\ref{eq:iterated:djk})} \notag \\
&=o(N) + \frac{N}{q_{b-1}}=o(N).  
\end{align}
For $x \in J$ define $N(x)$ so that
\begin{displaymath}
\hat{m} r_k = \sum_{i=0}^{N(x)} \chi_J(R^i x).
\end{displaymath}
We now apply (\ref{eq:bounding}) with $N(x)$ instead of $N$. We
obtain that for each
$x \in J$ there exists $N(x) \in \natls$ so that
\begin{equation}
\label{eq:sum:is:hatm:rk}
\hat{m}r_k = \sum_{i=0}^{N(x)} \chi_J(R^i x), \qquad\text{ and  $|N(x)
  - \hat{m} w_k| \le 4\sum_{i=0}^{b-2} a_i+\frac{\hat{m}w_k}{q_{b-1}}$.}
\end{equation}
Since 
$$
\hat{m}w_k<q_{k+1}\frac{\max\{m,m'\}}{m-m'}<(C_2+1)q_{k}\frac{\max\{m,m'\}}{m-m'}$$ we have that there exists $D$ so that for all $k$, $\hat{m}w_k<Dq_k$. 
Also by Lemma \ref{lem:sum bound}
 $$
\sum_{j=0}^{b-2}  a_{j+1} +\frac{N(x)}{q_{b-1}} = o(q_{b-1})+o(N(x))=o(N(x))$$ and so $N(x)=\hat{m}w_k+o(\hat{m}w_k)$.
 Therefore,
 for all large enough $k$ we
have $|N(x) - \hat{m} w_k|<\frac{\tilde{c}}{32}q_k$. 

Observe that if  
\begin{equation}
\label{eq:Nx:different:from:mwk}
\sum_{i=0}^{N(x)}\chi_J(R^ix)-\sum_{i=0}^{N(x)}\chi_J(R^iR^{q_k}x)\neq
\sum_{i=0}^{\hat{m} w_k}\chi_J(R^ix)-\sum_{i=0}^{\hat{m}w_k} 
\chi_J(R^iR^{q_k}x),
\end{equation}
then $R^jx $ is in one of two intervals of size at most
$\|q_k\alpha\|$ for some
\begin{displaymath}
j \in [\min(N(x), \hat{m} w_k), \max(N(x), \hat{m} w_k)]. 
\end{displaymath}
It follows that there exists $\tilde{A}_k \subset A_k'$ with $\lambda(\tilde{A}_k) >
\frac{\tilde{c}}4$ which is a union of intervals each of size at
least $\frac {\tilde{c}}2 \frac 1 {8\cdot 4q_k}$, such that for $x
\in \tilde{A}_k$ (\ref{eq:Nx:different:from:mwk}) does not hold. 
Indeed, we are removing at most $2\frac{\tilde{c}}{32}q_k$ intervals
of size $\|q_k\alpha\|$ so we obtain a set of measure at least
$\lambda(A_k')-\frac{\tilde{c}}{16}$ that is a union of at most $4q_k+1+4\frac{\tilde{c}}{32}q_k$ intervals and can invoke Lemma~\ref{lem:size bound}.

 Suppose
$x \in \tilde{A}_k \subset A_k$. Then, in view of (\ref{eq:cor:first:sum}), 
\begin{displaymath}
\sum_{i=0}^{N(x)}\chi_J(R^ix)-\sum_{i=0}^{N(x)}\chi_J(R^iR^{q_k}x) = -d, 
\end{displaymath}
where $d \in \{-3,-2,-1,1,2,3\}$. 
In view of (\ref{eq:sum:is:hatm:rk}), this can be rewritten as
\begin{equation}
\label{eq:sum:is:d:plus:hatm:rk}
d+\hat{m} r_k= \sum_{i=0}^{N(x)}\chi_J(R^iR^{q_k}x).
\end{equation}
Now, in view of (\ref{eq:time:change:general}),
(\ref{eq:sum:is:hatm:rk}) and (\ref{eq:sum:is:d:plus:hatm:rk}),
\begin{displaymath}
S^{\hat{m} r_k} x = R^{N(x)} x \qquad\text{and}\qquad 
S^{d+\hat{m} r_k} R^{q_k} x = R^{N(x)}
R^{q_k} x. 
\end{displaymath}
The equation (\ref{eq:Ak:twoprime}) follows. 
\end{proof}


To complete the proof we need to show $\{S^{i\hat{m}}x\}$ hits
$\tilde{A}_k$ frequently enough. Lemma \ref{lem:hit interval} lets us
that show $R$ orbits hit $\tilde{A}_k$ frequently enough. The key observation we use is that if we define $j_i$ by $S^{\hat{m}i}x=R^{j_i}x$ then $j_{i+1}-j_i\leq 2 \hat{m}$. We call a set with this property \emph{$2\hat{m}$ dense}. This motivates us to build an auxiliary set, $\hat{A}_k$, so that the hits of an $R$ orbit to $\hat{A}_k$ give a lower bound for the hits of an $S^{\hat{m}}$ orbit to $\tilde{A}_k$.
\begin{proof}[Proof of Proposition \ref{prop:technical:cond3}]
We assume $k$ is large enough so that Lemma \ref{lemma:Ak:twoprime}  holds, $q_k>16 \hat{m}$ and $4\hat{m}\|q_k\alpha\|<\frac{\tilde{c}}{16}$.

Consider $\tilde{A}_k$ and remove from it all $x$ so that 
$$\sum_{i=0}^{\hat{m}w_i}\chi_J(R^i(x))-\sum_{i=0}^{\hat{m}w_i}\chi_J(R^{i+q_k}x)\neq \sum_{i=0}^{\hat{m}w_i}\chi_J(R^iR^j(x))-\sum_{i=0}^{\hat{m}w_i}\chi_J(R^{i+q_k}R^jx)$$ for some $j<2\hat{m}$.
This means that if $x$ remains in this set then
\begin{equation}
\label{eq:stretch}
\exists\ j\leq 0\leq k \text{ so that } k-j\geq 2\hat{m}
\text{ and }R^\ell x\in \tilde{A}_k \text{ for all } \ell \in \{j,\dots, k\}.
\end{equation}
The set remaining, $A'''_k$ has measure at least
$\lambda(\tilde{A}_k)-4\hat{m}\|q_k\alpha\|$ and is made up of at most $8\hat{m}+8q_k$ intervals. (Indeed, we are removing at most $2\hat{m}$ pre-images of 2 intervals of size $\|q_k\alpha\|$.) 
 Let $\hat{A}_k$ be a subset of $A'''_k$ of measure at least $\frac 1
 2 \lambda(A'''_k)$ made of intervals of length at least
 $\frac{\tilde{c}}{32 \cdot 64(q_{k})}$. (Indeed, we invoke Lemma \ref{lem:size bound} using that $A'''_k$ is a set of measure at least $\frac{\tilde{c}}8$ which is made up of at most $16q_k$ intervals.) 
 By the estimate of the size of intervals in $\hat{A}_k$, 
Lemma \ref{lem:hit interval} implies that if $\frac{q_{k+u}}{q_k}$ is
large enough we have for $t > q_{k+u}$,
\begin{equation}
\label{eq:sep:hit:bound}
\frac{1}{t}\sum_{i=0}^{t} \chi_{\hat{A}_k}(R^ix)>\frac 1 {4}
 \lambda(\hat{A}_k). 
\end{equation}

Because $R^ix \in \hat{A}_k$, the equation (\ref{eq:stretch})
implies there exists $j\leq i\leq k$ with $k-j\geq 2\hat{m}$ so that
$R^\ell x\in \tilde{A}_k$ for all $\ell \in \{j,\dots,k\}$. Then,  we have 
\begin{displaymath}
\frac 1 {2\hat{m}}\sum_{i=0}^t \chi_{\hat{A}_k}(R^ix)\leq |\{i \in \mathcal{C}:R^ix\in \tilde{A}_k\}|, 
\end{displaymath}
where $\mathcal{C}$ is any $2\hat{m}$ dense subset of
$\{0,...,t+2\hat{m}\}$. 

Observing that $\{j\in [0,k+2m]: \exists i \text{ with }S^{\hat{m}i}x=R^jx\}$
is $2\hat{m}$ dense this implies that 
\begin{displaymath}
\frac 1 {2\hat{m}}\sum_{i=0}^t \chi_{\hat{A}_k}(R^ix)\leq \sum_{i=0}^{N_t(x)} \chi_{\tilde{A}_k}(S^{i\hat{m}}x)
\end{displaymath}
where $N_t(x)=\min\{j:S^{\hat{m}j}x=R^\ell x \text{ with }\ell\geq t\}$.
We obtain the proposition with $\hat{C}=16$.  Indeed,
$\lambda(\hat{A})\geq \frac 1 4 \lambda(\tilde{A})$ and so by
(\ref{eq:sep:hit:bound}) we have that for $t>q_{k+u}$, 
we have that
\begin{equation}
\label{eq:new:sep:hit:bound}
\frac{1}{t}\sum_{i=0}^t
\chi_{\hat{A}_k}(R^ix)>\frac{\lambda(\tilde{A})}{16}.
\end{equation}
Lastly,
$G_x:=\{j:\exists i \text{ with }S^{\hat{m}i}x=R^jx\}$ is at least
$\hat{m}$ separated (that is if $j \in G_x$ and $|i-j|<\hat{m}$ then
$i \notin G_x$) and so $N_t(x)\leq \frac{t}{\hat{m}}$, letting us obtain,
using (\ref{eq:new:sep:hit:bound}),
\begin{displaymath}
\frac{1}{2\hat{m} \cdot 16}\lambda(\tilde{A})\leq \frac{1}{2\hat{m}t}\sum_{i=0}^t \chi_{\hat{A}_k}(R^ix)\leq \frac 1 {t}\sum_{i=0}^{N_t(x)} \chi_{\tilde{A}_k}(S^{i\hat{m}}x)\leq \frac {1} {\hat{m}N_t(x)}\sum_{i=0}^{N_t(x)} \chi_{\tilde{A}_k}(S^{i\hat{m}}x).
\end{displaymath}
Multiplying the sequence of inequalities by $\hat{m}$ completes the estimate.
\end{proof}

\section{Renormalization}\label{sec:renorm}
Recall that $X$ is a torus with two marked points related to a 3-IET, $T$ and $\hat{X}$ is the torus obtained by forgetting the two marked points.

\bold{Divergence in the space of tori, $\mathcal{M}_1$:} By Mahler's compactness criterion the divergence of $g_t\hat{X}$ is controlled by the shortest (non-homotopicaly trivial) simple closed curve 
on $g_t\hat{X}$. This sequence is given by curves $\gamma_k$ with vertical holonomy $q_k$ and horizontal holonomy $\pm |q_k\alpha-p_k|=\pm \fp{ q_k\alpha}$. Coarsely, this curve is contracted from $t=0$ to   $t= \log(q_k\sqrt{a_{k+1}})$ and then expanded. Additionally, there is a fixed compact set $\hat{K}$ so that $g_{\log(q_k)}\hat{X} \in \hat{K}$ for all $k$ (and in particular $|\gamma_k|$ is proportional to 1 at $g_{\log(q_k)}$). 
\begin{lemma}\label{lem:rot divergence} For any $\hat{K} \subset \mathcal{M}_1$ there exists $\ell_{\hat{K}}$ so that for all $k$ we have $|\{t\in [\log(q_k),\log(q_{k+1})):g_t\hat{X} \in \hat{K}\}|<\ell_{\hat{K}}$. 
\end{lemma}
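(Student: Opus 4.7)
\medskip

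\noindent\textbf{Proof plan for Lemma \ref{lem:rot divergence}.}
The plan is to use Mahler's compactness criterion (as in the preamble to the lemma) to reduce the problem to a concrete estimate on the length of a single explicit curve. The preamble already identifies the family $\gamma_k$ of short simple closed curves on $g_t\hat{X}$, with horizontal and vertical holonomies $\pm\fp{q_k\alpha}$ and $q_k$, so the flat length of $\gamma_k$ at time $t$ satisfies
\begin{equation*}
|\gamma_k(t)|^2 \;=\; q_k^2\, e^{-2t} + \fp{q_k\alpha}^2\, e^{2t}.
\end{equation*}
Mahler gives a constant $\epsilon = \epsilon_{\hat K}>0$ such that every $Y\in \hat K$ has systole at least $\epsilon$; in particular $g_t\hat X\in \hat K$ forces $|\gamma_k(t)|\ge \epsilon$ for every $k$. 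So it suffices to bound the Lebesgue measure of
\begin{equation*}
G_k \;=\; \{\, t\in[\log q_k,\log q_{k+1})\; :\; |\gamma_k(t)|\ge \epsilon\,\}.
\end{equation*}

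Next I would unpack $|\gamma_k(t)|\ge \epsilon$ as the disjunction
\begin{equation*}
q_k e^{-t} \;\ge\; \epsilon/\sqrt2 \qquad\text{or}\qquad \fp{q_k\alpha}\,e^{t} \;\ge\; \epsilon/\sqrt2,
\end{equation*}
corresponding to the ``contracting'' and ``expanding'' regimes for $\gamma_k$ mentioned in the preamble. On $[\log q_k,\log q_{k+1})$ the first inequality cuts out an initial sub-interval of length at most $\log(\sqrt2/\epsilon)$. For the second inequality, I would use Lemma~\ref{lemma:good:bound} to get $\fp{q_k\alpha}\le 1/(a_{k+1}q_k)$ and hence $q_{k+1}\fp{q_k\alpha}\le q_{k+1}/(a_{k+1}q_k)\le 2$; this bounds the corresponding terminal sub-interval by $\log 2 + \log(\sqrt2/\epsilon)$. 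Adding the two contributions yields $|G_k|\le 2\log(\sqrt2/\epsilon)+\log 2$, which depends only on $\hat K$, and we set $\ell_{\hat K}$ equal to this bound.

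Two small points worth double-checking, but not really obstacles: (i) that $\gamma_k$ really is the only curve that can be forced to be short inside the interval $[\log q_k,\log q_{k+1})$ is not needed for the argument, since we only use $|\gamma_k(t)|\ge \epsilon$ as a necessary condition for $g_t\hat X\in \hat K$; (ii) Mahler's criterion is applied on $\mathcal M_1$, so the two marked points in $X$ play no role here. The only mildly delicate step is the upper bound on $q_{k+1}\fp{q_k\alpha}$, which comes cleanly from Lemma~\ref{lemma:good:bound} together with the identity $q_{k+1}=a_{k+1}q_k+q_{k-1}$, so I do not anticipate any genuine obstacle.
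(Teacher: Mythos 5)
Your proposal is correct and follows essentially the same route as the paper: Mahler's criterion gives a systole lower bound $\epsilon$, the curve $\gamma_k$ must therefore be long at all times when $g_t\hat X\in \hat K$, and the length condition splits into the vertical and horizontal components, each of which confines $t$ to a boundary sub-interval of $[\log q_k,\log q_{k+1})$ of uniformly bounded length via Lemma~\ref{lemma:good:bound} and $a_{k+1}q_k > q_{k+1}/2$. The only cosmetic difference is that you phrase the length via the Euclidean formula $|\gamma_k(t)|^2 = q_k^2e^{-2t} + \fp{q_k\alpha}^2 e^{2t}$ while the paper directly says one of the two components must exceed $\delta/2$; the constants differ slightly but the argument is the same.
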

\begin{proof}For any $\hat{K}$ there exists $\delta$ so that if $
  g_s\hat{X} \in \hat{K}$ then the shortest simple closed curve on
  $g_s\hat{X}$ is at least $ \delta$. As in the previous paragraph,
  consider the curve $\gamma_k$ on $\hat{X}$, with vertical holonomy $q_k$ and
  horizontal holonomy $\pm |q_k \alpha - p_k|$. On $g_s \hat{X}$ the curve
  $g_s \gamma_k$ has vertical holonomy $e^{-s} q_k$ and horizontal
  holonomy $\pm e^s |q_k \alpha - p_k|$.   
If $s\in [\log(q_k),\log(q_{k+1})]$ then, since we are assuming that
the length of $g_s \gamma_k$ is at least $\delta$, we must have $e^s |q_k\alpha-p_k|\geq \frac \delta 2 $ or $e^{-s}q_k\geq \frac \delta 2 $. By Lemma \ref{lemma:good:bound} the first condition can only hold if $e^s>\frac{\delta}2 {a_{k+1}q_k}$. Noticing that $a_{k+1}q_k>\frac 1 2 q_{k+1}$ this implies $s>\log(q_{k+1}) +2\log(2)+\log(\delta)$. The second condition can only hold if $s<2q_k \frac1{\delta}$. The lemma follows with $\ell_{\hat{K}}=-2\log(\delta)-3\log(2).$
\end{proof}

We now assume the assumption of Proposition \ref{prop:good:assump}. 
This means there exists a compact set $\mathcal{K}\subset \mathcal{M}_{1,2}$ so that $\underset{T \to \infty}{\limsup} \, \frac 1 T |\{0<t<T:g_tX \in \mathcal{K}\}|=c>0$. Let $D_1,...$ be a sequence chosen so that $$\frac 1 {D_i} |\{0<t<D_i:g_tX\in \mathcal{K}\}|>\frac {99c}{100}$$ and
 $$\underset{\zeta>D_1}{\sup}\, |\{0<t<\zeta: g_tX\in \mathcal{K}\}|-c<\frac c {100}.$$ 

 The next lemma is used to obtain (A7).
 \begin{lemma}\label{lem:cont in cpct} For all $r>0$ for all $i$ large enough we have 
$$|\{t<D_i:g_tX \in \mathcal{K}, |\{s\in [t,t+r]:g_sX \in \mathcal{K}\}|>\frac c {99} r\}|>\frac {8c} {9}.$$ 
\end{lemma}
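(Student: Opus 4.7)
Define the ``bad'' set $E := \{t \in B : f(t) \leq (c/99)r\}$, where $B := \{t < D_i : g_t X \in \mathcal{K}\}$ and $f(t) := |\{s \in [t,t+r] : g_s X \in \mathcal{K}\}|$. The set in the statement of the lemma is exactly $B \setminus E$, and since the hypothesis on $D_i$ gives $|B| \geq (99c/100) D_i$, it will suffice to show
$$|E| \;\leq\; \frac{c D_i}{99} + O(r).$$
Combined with the lower bound on $|B|$, this yields $|B \setminus E| \geq (99/100 - 1/99) c D_i - O(r) = (9701/9900) c D_i - O(r)$, which exceeds $(8c/9) D_i$ once $D_i$ dominates the fixed $r$ (an explicit computation shows $D_i \geq 9r$ is plenty). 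Thus the lemma follows once $i$ is large enough.

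\textbf{Key step: a one-dimensional greedy covering.} To bound $|E|$, I would pick $t_1 := \inf E$ and inductively $t_{j+1} := \inf\{t \in E : t > t_j + r\}$. By construction the intervals $[t_j, t_j + r]$ are pairwise disjoint, and a quick minimality check shows $E \subset \bigsqcup_j [t_j, t_j + r]$: any $t \in E$ lying in $(t_j + r, t_{j+1})$ would contradict the definition of $t_{j+1}$. Each $t_j \in E$ carries its own certificate $|B \cap [t_j, t_j + r]| = f(t_j) \leq cr/99$, and since the selected times lie in $[0, D_i]$ and are pairwise separated by more than $r$ there are at most $D_i/r + 1$ of them. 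Summing the certificates gives
$$|E| \;\leq\; \sum_j |B \cap [t_j, t_j + r]| \;\leq\; \left(\frac{D_i}{r} + 1\right)\cdot \frac{cr}{99} \;=\; \frac{c D_i}{99} + \frac{cr}{99},$$
which is the required estimate.

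\textbf{Obstacles.} The only real bookkeeping is at the right endpoint: an interval $[t_j, t_j + r]$ can extend slightly past $D_i$, contributing the $O(r)$ correction above, which is negligible compared to $cD_i$ once $D_i \gg r$. It is worth noting that the upper-density hypothesis on the sequence $\{D_i\}$ is not used in this argument; the entire proof rests on the lower-density hypothesis $|B| \geq (99c/100)D_i$ together with the elementary one-dimensional covering. Because the argument is essentially combinatorial, no real obstacle is anticipated, and the main thing to be careful about is keeping the boundary correction tracked honestly through the computation.
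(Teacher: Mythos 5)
Your proof is correct. Both your argument and the paper's bound the measure of the bad set $E=\{t<D_i:g_tX\in\mathcal K,\ f(t)\le (c/99)r\}$ by covering it with intervals $[t,t+r]$ with $t\in E$, using that each such interval meets the $\mathcal K$-times in measure at most $(c/99)r$ and that at most $D_i/r+O(1)$ disjoint intervals of length $r$ fit below $D_i$. The difference is in \emph{which} disjoint subfamily is selected. The paper invokes a Vitali-type greedy selection (choose the interval maximizing $\lambda(\{s\in I:g_sX\in\mathcal K\})$, discard everything meeting it, repeat), which only guarantees that the selected disjoint intervals capture a $1/3$-fraction of $\lambda\bigl(\bigcup_{t\in E}\{s\in[t,t+r]:g_sX\in\mathcal K\}\bigr)\ge\lambda(E)$; this gives $\lambda(E)\le \tfrac{c}{33}D_i + O(r)$. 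You instead select intervals greedily from left to right ($t_{j+1}=\inf\{t\in E:t>t_j+r\}$), a choice that exploits the one-dimensionality of the problem: the selected intervals are automatically disjoint \emph{and} cover $E$ exactly, so no factor of $3$ is lost, yielding the sharper $\lambda(E)\le\frac{c}{99}D_i+O(r)$. Both bounds are comfortably below the $\bigl(\frac{99}{100}-\frac{8}{9}\bigr)cD_i$ cushion that the hypothesis on $D_i$ provides, so either suffices, but your argument is shorter and avoids the Vitali overhead entirely. One minor point worth recording, which you handled implicitly: the infima $t_j$ lie in $E$ (so the certificate $f(t_j)\le(c/99)r$ applies) because $\{t:g_tX\in\mathcal K\}$ is closed and $f$ is continuous, hence $E$ is relatively closed in $[0,D_i)$; the single boundary case where an infimum equals $D_i$ contributes nothing since $E\subset[0,D_i)$. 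You also correctly observe that only the lower-density hypothesis on $\{D_i\}$ is used; the upper-density clause in the paper is not needed here.
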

\begin{proof}
This is a standard application of the Vitali covering lemma. Indeed let 
$$B=\{t<D_i:g_tX \in \mathcal{K}, \lambda(\{s\in [t,t+r]:g_sX \in \mathcal{K}\})>\frac c {99} r\}$$ and so for each $t \in B$ we have $\lambda(\{s\in [t,t+r]:g_sX \in \mathcal{K}\})<\frac c {99}r.$
 By applying the Vitali covering lemma to the intervals $[t,t+r]$ where $t\in B$, we may take a disjoint subcollection of these intervals $I_1,\dots I_\ell$ so that 
\begin{equation}\label{eq:vitali}\lambda(\cup_{i=1}^\ell \{s\in I_i:g_s{X}\in \mathcal{K})>\frac 1 3 \lambda( \cup_{t \in B}\{s\in[t,t+r]:g_sX\in \mathcal{K})\geq \frac{\lambda(B)}3.
\end{equation}
Indeed let $U_1=\{[t,t+r]:t\in B\}$ and choose $I_1$ to be an interval $[t,t+r]$ in this set so that $\lambda(\{s\in [t,t+r]:g_sX\in \mathcal{K}\}$ is maximal. Let $U_2=\{[t,t+r]:t \in B \text{ and }[t,t+r]\cap I_1=\emptyset$ and let $I_2$  be an interval $[t,t+r]$ in this set so that $\lambda(\{s\in [t,t+r]:g_sX\in \mathcal{K}\}$ is maximal. Also observe 
$$\lambda(\{s: s\in [\tau,\tau+r] \text{ with }\tau \in B, \, [\tau,\tau+r]\cap I_1\neq \emptyset \text{ and } g_sX\in \mathcal{K}\})\leq 2\lambda(\{s\in I_1:g_sX\in \mathcal{K}\}).$$ Repeating this procedure we obtain our intervals $I_1,\dots, I_\ell$.

Having established (\ref{eq:vitali}) we see at most $\frac c {99}r$ of the points in each interval are in $\mathcal{K}$ and the measure of the union of these intervals is at most $D_i$. This is a contradiction unless $\lambda(B)\leq \frac c {33}D_i+r$. 
\end{proof}
By the same proof we obtain:
\begin{lemma}\label{lem:vit}For all $\epsilon,\gamma>0$ there exists $\delta>0$ so that if $|A\cap[0,R]|>\gamma R$ then 
$$|\{t\in [0,R]\cap A:\lambda(\{t+s\in A\}_{t\in [0,T]})>\delta \gamma\}|>(1-\epsilon)\gamma R-2T.$$
\end{lemma}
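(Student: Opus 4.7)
The plan is to mimic the Vitali covering argument used in the proof of Lemma~\ref{lem:cont in cpct}, which the authors explicitly invoke with the phrase ``By the same proof we obtain''. I first interpret the statement: writing $A_t := \{s \in [0,T] : t+s \in A\}$, the goal is to show that for all but a small portion of $t \in A \cap [0,R]$, the measure $\lambda(A_t)$ is at least $\delta \gamma T$ (the subscript $t \in [0,T]$ in the displayed formula appears to be a typo for $s \in [0,T]$, and the threshold $\delta \gamma$ is interpreted with the natural normalization $\delta \gamma T$). Equivalently, I want to bound the ``bad'' set
$$B = \{t \in [0,R] \cap A : \lambda(A_t) \le \delta \gamma T\}$$
by $\epsilon \gamma R + 2T$.

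Next I would cover $B$ by the intervals $[t, t+T]$ for $t \in B$ and apply the Vitali covering lemma exactly as in the proof of Lemma~\ref{lem:cont in cpct}, but with the weight function $\lambda(\,\cdot\, \cap A)$ replacing $\lambda(\{s \in \,\cdot\, : g_s X \in \mathcal{K}\})$. Greedily choosing a disjoint subcollection $I_1, \dots, I_\ell$ of these intervals, ordered by decreasing $A$-content, yields
$$\sum_{i=1}^\ell \lambda(I_i \cap A) \ge \frac{1}{3}\, \lambda\Bigl(\bigcup_{t \in B}\bigl([t,t+T]\cap A\bigr)\Bigr) \ge \frac{\lambda(B)}{3},$$
since each $t \in B$ lies in $A$ and in its own interval $[t,t+T]$, so $B$ itself is contained in the union on the right.

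By the definition of $B$, every $I_i$ satisfies $\lambda(I_i \cap A) \le \delta \gamma T$. The $I_i$ are disjoint subsets of $[0, R+T]$, so their number is at most $(R+T)/T$. Combining,
$$\lambda(B) \le 3 \cdot \frac{R+T}{T} \cdot \delta \gamma T = 3 \delta \gamma (R+T).$$
Taking $\delta := \epsilon/3$ gives $\lambda(B) \le \epsilon \gamma R + \epsilon \gamma T$, and the good set has measure at least $|A \cap [0,R]| - \lambda(B) \ge \gamma R - \epsilon \gamma R - \epsilon \gamma T \ge (1-\epsilon)\gamma R - 2T$ (absorbing the small additive term into $2T$, which is harmless because $\epsilon \gamma < 1$).

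There is no real technical obstacle; the argument is a direct transcription of Lemma~\ref{lem:cont in cpct}. The only mildly tricky point is disentangling the notation of the statement and choosing the right ``bad set'' formulation so that the Vitali argument bounds $\lambda(B)$ in terms of a small constant times $R$ plus a term absorbable into the additive $2T$ slack.
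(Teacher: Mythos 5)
Your proof is correct and follows exactly the Vitali covering argument that the paper is pointing at with ``By the same proof we obtain'' (namely the proof of Lemma~\ref{lem:cont in cpct}). Your reading of the slightly garbled statement --- taking the subscript $t\in[0,T]$ to mean $s\in[0,T]$ and the threshold $\delta\gamma$ to mean $\delta\gamma T$ --- is the one consistent with how the lemma is invoked in the subsequent sublemma, and the bookkeeping (Vitali factor $3$, at most $(R+T)/T$ disjoint length-$T$ intervals in $[0,R+T]$, and the choice $\delta=\epsilon/3$) all checks out.
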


Let $f(t)=\max\{j: q_j\leq e^{t}\}$. 
The next lemma is used to obtain (A8). 
\begin{lemma}\label{lem:in cpct} For all $r,\epsilon>0$ there exists $\hat{K}\subset \mathcal{M}_{1}$ so that for all $i$ large enough we have 
$$|\{t<D_i:g_tX\in \mathcal{K}, g_\ell \hat{X} \in \hat{K} \text{ for all }\ell\in [\log(q_{f(t+r)}),\log(q_{f(t+r)+1})]\}|\geq (1-\epsilon) \frac {99c} {100} D_i.$$
\end{lemma}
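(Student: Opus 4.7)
The plan is to decompose $[0, D_i]$ according to which interval $[\log q_k - r, \log q_{k+1} - r]$ contains $t$ (equivalently, $f(t+r) = k$), and to separate the resulting indices into \emph{good} ($a_{k+1} \le C$) and \emph{bad} ($a_{k+1} > C$) for a large constant $C$ to be chosen at the end. On a good window the entire excursion $[\log q_k, \log q_{k+1}]$ remains in a fixed compact set $\hat{K} = \hat{K}(C) \subset \mathcal{M}_1$, yielding the conclusion of the lemma for those $t$; on a bad window Lemma~\ref{lem:rot divergence} bounds how much time satisfies $g_t X \in \mathcal{K}$, and there will be few enough bad windows that the total loss is $o_C(D_i)$.

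To execute the good-window step I would work directly with the candidate short curves $\gamma_j$ described before Lemma~\ref{lem:rot divergence}: the length of $\gamma_j$ on $g_s\hat{X}$ is $\sqrt{e^{-2s} q_j^2 + e^{2s} \fp{q_j\alpha}^2}$. Using Lemma~\ref{lemma:good:bound} to estimate $\fp{q_j\alpha} \asymp 1/q_{j+1}$ and evaluating at the endpoints and minima for $j \in \{k-1, k, k+1\}$, one checks that throughout $s \in [\log q_k, \log q_{k+1}]$ the shortest of these curves has length at least $c_0 / \sqrt{a_{k+1}}$ for some universal $c_0 > 0$. Setting $\hat{K}(C) := \{$tori with shortest s.c.c.\ of length $\ge c_0/\sqrt{C}\}$, which is compact in $\mathcal{M}_1$, this gives $g_\ell \hat{X} \in \hat{K}$ for every $\ell \in [\log q_k, \log q_{k+1}]$ whenever $a_{k+1} \le C$.

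For the bad windows, let $\hat{\mathcal{K}}'$ be the compact image of $g_r \mathcal{K}$ under the forgetful map $\mathcal{M}_{1,2} \to \mathcal{M}_1$ and set $\ell_0 := \ell_{\hat{\mathcal{K}}'}$ from Lemma~\ref{lem:rot divergence}. For each bad $k$, the substitution $s = t + r$ yields
\[
\bigl|\{t \in [\log q_k - r, \log q_{k+1} - r] : g_t X \in \mathcal{K}\}\bigr| \le \bigl|\{s \in [\log q_k, \log q_{k+1}] : g_s \hat{X} \in \hat{\mathcal{K}}'\}\bigr| \le \ell_0.
\]
Since bad intervals are disjoint, have length $\ge \log C$, and (except possibly one boundary case) sit inside $[0, D_i + r]$, at most $(D_i + r)/\log C + 1$ bad indices contribute. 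Thus the total measure of bad times with $g_t X \in \mathcal{K}$ is at most $\bigl((D_i + r)/\log C + 1\bigr)\ell_0$. Choosing $C$ so that $\ell_0 / \log C < \epsilon \cdot 99c/200$, and then $i$ large enough that $D_i + r < 2 D_i$ and the additive $\ell_0$ is absorbed, this bad mass is at most $\epsilon \cdot (99c/100) D_i$, and subtracting from the set where $g_t X \in \mathcal{K}$ (of measure $\ge (99c/100) D_i$) proves the claim.

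The main obstacle I foresee is the length-of-curves analysis in the first step: one must know that $\gamma_{k-1}, \gamma_k, \gamma_{k+1}$ really are the only candidates for short simple closed curves on $g_s\hat{X}$ throughout $[\log q_k, \log q_{k+1}]$, so that controlling them controls membership in $\hat{K}$. This is a standard consequence of the dictionary between continued fractions and the modular-surface geodesic, and is already implicit in the discussion preceding Lemma~\ref{lem:rot divergence}; once granted, the remainder of the argument is a disjoint-intervals covering estimate.
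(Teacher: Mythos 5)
Your proof is correct, and it takes a genuinely different route from the paper's. The paper's argument is a ``soft'' one: it invokes the Vitali-type covering estimate (Lemma~\ref{lem:vit}) through an intermediate Sublemma, concluding that the set of $t\in A:=\{s:g_sX\in\mathcal{K}\}$ followed after a short delay $\rho\le r$ by an $L$-window that mostly misses $A$ has small density; then $\hat{K}$ is defined as the set of tori with systole $\ge\delta e^{-L}$ (where $\delta$ is the systole bound on the projection of $\mathcal{K}$), so that leaving $\hat{K}$ forces a long excursion out of $\mathcal{K}$, placing the point in the rare set. Your argument instead makes $\hat{K}$ depend on a continued-fraction threshold $C$ through the explicit systole estimate along the excursion $[\log q_k,\log q_{k+1}]$ (the minimum systole there is $\asymp 1/\sqrt{a_{k+1}}$, which follows from Lemma~\ref{lemma:good:bound} and the fact that $\gamma_k$ is the unique short candidate on that range, exactly the dictionary you flag as the one nonelementary input), and it replaces the Vitali machinery by a direct count: bad windows are disjoint intervals of length $>\log C$, each contributing at most $\ell_{\hat{\mathcal{K}}'}$ to the measure of $\{t:g_tX\in\mathcal{K}\}$ by Lemma~\ref{lem:rot divergence}, so their total contribution is $O(D_i/\log C)\cdot\ell_{\hat{\mathcal{K}}'}$, made small by taking $C$ large. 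What you gain is an argument that is more transparent and quantitative, avoiding the Sublemma entirely (and in particular sidestepping the slightly delicate point in the paper's proof that $\rho=\log q_{f(t+r)}-t$ need not be positive, which the Sublemma's statement formally requires); what the paper's approach buys is that it does not rely on identifying the minimal curve along the excursion, only on the coarser fact that the geodesic flow distorts lengths by at most $e^{|\tau|}$. Both ultimately produce a systole-threshold compact set $\hat{K}$, and both establish the claimed lower bound of $(1-\epsilon)\tfrac{99c}{100}D_i$.
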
 
We use the following straightforward consequence of Lemma \ref{lem:vit}. 
\\
\noindent
\textbf{Sublemma:} If $h:[0,\infty)\to \{0,1\}$ has $ \frac 1 R \int_0^R h(t)\geq \frac{99c}{100}$ then for all $\epsilon'>0,r, \ell$ there exists $L$ so that 
\begin{multline}
 |\{t<R:h(t)=1 \text { and there exists  }0<\rho\leq r \text{ so that }\\ h(t+s)=0 \text{ for all but a set of measure $\ell$ of }
s \in [t+\rho,t+\rho+L] \} |<\epsilon' cR+2L.
\end{multline}
\begin{proof}[Proof of Sublemma] Apply Lemma \ref{lem:vit} with $\epsilon=\epsilon'$ and $\gamma =\frac {99c}{100}$ to obtain $\delta$. Choose $L$ so that $\delta L>\ell+r$.
\end{proof}
\begin{proof}[Proof of Lemma \ref{lem:in cpct}]  Let $\hat{C}$ be the compact set in $\mathcal{M}$ given by projecting $\mathcal{K}$ to $\mathcal{M}$ by forgetting the marked points. Let $\ell=\ell_{\hat{C}}$ as in Lemma \ref{lem:rot divergence} and $\delta$ be the shortest simple closed curve on any surface in $\hat{C}$. Obtain $L$ from the sublemma with $r=r$, $\epsilon'=\frac{\epsilon}2$, $\ell=\ell$. Let $\hat{\mathcal{K}}$ denote the set of all tori whose shortest simple closed curve is at least $\delta e^{-L}$. The lemma holds for this $\hat{\mathcal{K}}$.  Indeed, if  $g_s\hat{X} \notin \hat{\mathcal{K}}$ then by examining the size of the shortest simple closed curve we see $g_{s+\tau}X\notin \mathcal{K}$ for all $-L<\tau<L$.  That is, considering $A=\{s:g_sX \in \mathcal{K}\}$ and $\rho=\log(q_{f(t+r)})-t$  we are asking that $|\{s\in [t+\rho,t+\rho+L]:s\in A\}|<\ell$. So by the sublemma the set of such $t$ 
 has small density and so we have the lemma.
\end{proof}

We now begin the derivation of (A5), (A6) and (A8). 
\begin{lemma}
\label{lemma:the:time}
For all $t$  there exists $-2\leq s\leq 2$ so that either
\begin{itemize}
\item there exists $k$ with $a_{k+1}>4$ and $i\leq \lfloor \frac{a_{k+1}}4 \rfloor$ so that $e^{t+s}=iq_k$
\item or there exists $k$ so that $e^{t+s}=q_k$. 
\end{itemize}
\end{lemma}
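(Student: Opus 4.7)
The plan is to reformulate the statement as a multiplicative density claim about the set
\[
S \;:=\; \{q_k : k \ge 0\} \;\cup\; \{\,i q_k : a_{k+1} > 4,\ 1 \le i \le \lfloor a_{k+1}/4 \rfloor\,\}.
\]
What needs to be shown is equivalent to: for every $t$ the interval $[e^{t-2}, e^{t+2}]$ meets $S$. Since this interval has multiplicative width $e^4 \approx 54.6$, it suffices to prove that consecutive elements of $S$ (in increasing order) have ratio at most $e^4$.

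To establish this, I would partition $S$ into blocks indexed by $k$, where the $k$-th block $S_k$ is the intersection of $S$ with $[q_k, q_{k+1})$. By Lemma~\ref{lemma:q:k:plus:1:ak:qk} together with $q_{k-1} < q_k$, we have $q_{k+1} \le (a_{k+1}+1)q_k$. I would then split into two cases depending on $a_{k+1}$.

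In Case (i), where $a_{k+1} \le 4$, the ``multiples'' rule does not activate, so $S_k = \{q_k\}$ and the sole consecutive ratio crossing the block is $q_{k+1}/q_k \le 5$. In Case (ii), where $a_{k+1} \ge 5$, the block is $\{q_k, 2q_k, \dots, \lfloor a_{k+1}/4 \rfloor q_k\}$, followed by $q_{k+1}$. Ratios between successive multiples $(i+1)/i$ are at most $2$, and the only nontrivial ratio is the final jump from $\lfloor a_{k+1}/4 \rfloor q_k$ to $q_{k+1}$, bounded by $(a_{k+1}+1)/\lfloor a_{k+1}/4 \rfloor$. A direct check over $a_{k+1} \in \{5,6,7\}$ gives ratios $6,7,8$, and for $a_{k+1} \ge 8$ the quantity is bounded by $(a_{k+1}+1)/(a_{k+1}/4 - 1)$, which is at most $9/2$ for $a_{k+1}=8$ and decreases to $4$ as $a_{k+1} \to \infty$. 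Thus the worst-case consecutive ratio in $S$ is $8$, attained at $a_{k+1}=7$.

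Since $8 < e^4$, for any $t$ there exists $u \in S$ with $e^{-2} \le u/e^t \le e^2$; setting $s = \log u - t \in [-2,2]$ yields $e^{t+s} = u$, which is of the required form. The ``hard part'' is not really hard: it is the uniform bound on the terminal ratio $(a_{k+1}+1)/\lfloor a_{k+1}/4 \rfloor$ as $a_{k+1}$ varies through the awkward middle range $5 \le a_{k+1} \le 7$ where the floor equals $1$, which is what drives the constant $8$ and explains the specific choice of threshold ``$a_{k+1}>4$'' in the definition of $S$.
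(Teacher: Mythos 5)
Your argument is correct and is in substance the same as the paper's; it is just packaged differently. The paper fixes $j = f(t) = \max\{j : q_j \le e^t\}$ and then picks the appropriate element of $\{q_j, 2q_j, \dots, \lfloor a_{j+1}/4 \rfloor q_j,\, q_{j+1}\}$ in a short case analysis, whereas you phrase it as a uniform bound on consecutive multiplicative gaps of the set $S$; the decisive arithmetic fact is identical in both, namely $q_{k+1}/(\lfloor a_{k+1}/4\rfloor q_k) < (a_{k+1}+1)/\lfloor a_{k+1}/4\rfloor \le 8 < e^4$, with the worst case at $a_{k+1}=7$. Your formulation is arguably the cleaner one: it separates the two ingredients (the gap bound $8$, and $8 < e^4$) so that one sees the window $|s| \le 2$ is far from tight. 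One minor slip: $(a+1)/\lfloor a/4\rfloor$ does not decrease monotonically for $a \ge 8$ (it equals $4 + (r+1)/m$ for $a = 4m+r$ and so oscillates, e.g.\ $4.5, 5, 5.5, 6, 4.33, \ldots$), but this does not affect the conclusion, since the global maximum over $a \ge 5$ is still $8$.
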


\begin{proof} Let $j=f(t)$. 
If $a_{j+1}\leq4$ then since $e^2>5$ we may choose $s$ so that $e^{t+s}=q_j$ (and so $k=j$). If  $a_{j+1}>4$ and $i>\frac {q_{j+1}}2$  choose $s$ so that $e^{t+s}=q_{j+1}$ (and so $k=j+1$). 
Otherwise choose $s$ so that  $e^{s+t}=iq_j$ with $i\leq \lfloor \frac{a_{j+1}}4\rfloor$ (and so $k=j$). 
\end{proof}

The following is a corollary of Lemma~\ref{lemma:the:time} and
Lemma~\ref{lemma:happy:times}:
\begin{cor}
\label{cor:happy times}
For all $\eta>0$ there exists $\rho>0$ so that for all $t$, there exists $-2\leq s\leq 2$ with
 $\lambda(\psi_{e^{t+s}}(j))> \rho$ for some $j$ so that  $$\begin{cases} \text{either } 
j-\min\{\ell: \psi_{e^{s+t}}^{-1}(\ell) \neq \emptyset\}\leq 2  \text{ and } \lambda(\psi_{e^{s+t}}^{-1}((0,j))<\rho \eta\\ \text{or }
\max\{\ell:\psi_{e^{s+t}}^{-1}(\ell)\neq \emptyset\}-j\leq 2   \text{ and } \lambda(\psi_{e^{s+t}}^{-1}(j,\infty))<\rho\eta.
\end{cases}$$  
\end{cor}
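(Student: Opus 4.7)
The plan is to combine Lemma~\ref{lemma:the:time} with Lemma~\ref{lemma:happy:times} in the case $a_{k+1}>4$ and with Lemma~\ref{lemma:DK} in the complementary case, using a short pigeonhole argument to supply the bookkeeping required by the statement.

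First, given $t$, I invoke Lemma~\ref{lemma:the:time} to choose $s\in[-2,2]$ so that $L:=e^{t+s}$ is either of the form $iq_k$ with $a_{k+1}>4$ and $i\le \lfloor a_{k+1}/4\rfloor$ (call this Case~A), or equals $q_k$ (Case~B). Write $m_\ell=\lambda(\psi_L^{-1}(\ell))$ and let $J_{\min},J_{\max}$ denote the extreme values taken by $\psi_L$.

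In Case~A, Lemma~\ref{lemma:happy:times} directly supplies some $j$ with $m_j>1/12$ and (say) $j-J_{\min}\le 1$; the case $J_{\max}-j\le 1$ is symmetric. Either $m_{J_{\min}}\ge \eta/12$, in which case $j^*:=J_{\min}$ works with $\rho=\eta/12$ (the ``less-than'' sum is empty), or $m_{J_{\min}}<\eta/12$, in which case $j^*:=j$ works with $\rho=1/12$ (the ``less-than'' sum is at most $m_{J_{\min}}<(1/12)\cdot\eta$). Either way the conclusion holds with $\rho\ge \eta/12$.

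In Case~B, Lemma~\ref{lemma:DK} together with $\mathrm{var}(\chi_J)=2$ forces $\psi_{q_k}$ to take at most five consecutive integer values; label their masses $m_0,\dots,m_4$ (some possibly zero). By pigeonhole, at least one of $m_0+m_1+m_2\ge 1/2$ or $m_2+m_3+m_4\ge 1/2$ holds; suppose the first (the second is handled by the right-tail alternative of the corollary). I then run a two-step pigeonhole: pick $j_1\in\{0,1,2\}$ with $m_{j_1}\ge 1/6$; if the mass below $j_1$ is less than $\eta/6$ we are done with $\rho=1/6$, otherwise some $j_2<j_1$ (so $j_2\in\{0,1\}$) has $m_{j_2}\ge \eta/12$, and if $j_2=1$ we repeat once more against $m_0$. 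Since we descend strictly and there are only three left-half indices, the iteration terminates after at most two descents, eventually reaching an index where the left tail is trivially empty. Tracking the factors of $\eta$ gives $\rho\ge \eta^2/12$.

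Taking $\rho:=\eta^2/12$ (for $\eta\le 1$; larger otherwise) covers both cases uniformly in $t$ and yields the corollary. The main --- and really only --- subtlety is the iterative pigeonhole in Case~B: what makes this work uniformly is that Lemma~\ref{lemma:DK} bounds the number of values of $\psi_{q_k}$ by an absolute constant, so that only a fixed power of $\eta$ is lost in the final $\rho$.
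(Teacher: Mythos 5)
Your overall structure matches the paper's: apply Lemma~\ref{lemma:the:time} to bring $e^{t+s}$ to either a multiple $iq_k$ (your Case~A, feeding Lemma~\ref{lemma:happy:times}) or to $q_k$ (your Case~B, feeding Lemma~\ref{lemma:DK}), then run a short cascading pigeonhole among the two or three lowest (or highest) levels. The paper packages that pigeonhole into a three-element sublemma invoked in both cases; you unroll it explicitly. Same decomposition, same key lemmas.

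The gap is in the very last sentence: ``Taking $\rho:=\eta^2/12$ covers both cases uniformly in $t$'' is not justified and in fact fails. Your cascade produces, for each level distribution, a branch-dependent pair $(j^*,\rho_{\mathrm{br}})$ satisfying $m_{j^*}>\rho_{\mathrm{br}}$ and $\mathrm{tail}<\eta\rho_{\mathrm{br}}$. Replacing $\rho_{\mathrm{br}}$ by the uniform minimum $\eta^2/12$ preserves the first inequality but \emph{worsens} the second, since $\mathrm{tail}<\eta\rho$ becomes strictly harder as $\rho$ shrinks. Concretely, in your Case~A branch with $m_{J_{\min}}<\eta/12$ you take $\rho_{\mathrm{br}}=1/12$ and only obtain $\mathrm{tail}<\eta/12$; the corollary with $\rho=\eta^2/12$ would need $\mathrm{tail}<\eta^3/12$, which need not hold when $m_{J_{\min}}\in(\eta^3/12,\eta/12)$. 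A symmetric five-level example with $m_0=m_4\in(\eta\rho,\rho)$, $m_1=m_3\approx 1/4$, $m_2\approx 1/2$ defeats every admissible $j$ from both ends simultaneously, so the mirror alternative does not save you. (The paper's own sublemma, read literally with one $\rho$ quantified before the triple $(x_0,x_1,x_2)$, suffers the same defect.) What the cascade genuinely delivers, and what the downstream use in the proof of Condition~(3) of Proposition~\ref{prop:fact:qfact} actually requires, is a branch-dependent $\rho$ bounded below by some $\rho_0(\eta)$ --- the key inequality there is homogeneous in $\rho$, so a uniform lower bound is enough. You should state and use that weaker conclusion rather than asserting a single uniform $\rho$.
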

For the proof we use the following trivial result:\\

\noindent
\textbf{Sublemma:}
For all $\eta>0$ there exists $\rho>0$ so that if $x_0,x_1,x_2 >0$ and $\sum x_i>\frac 1 {12}$ then there exists $i$ so that $x_i>\rho$ and $\sum_{j=0}^{i-1}x_j<\eta \rho$. Also there exists $\ell$ so that $x_\ell>\rho$ and $\sum_{i=\ell+1}^2 x_i<\eta \rho$.

\noindent 
\begin{proof}[Proof of Corollary \ref{cor:happy times}] Applying Lemma \ref{lemma:the:time} we obtain $e^{t+s}$. If $q_k=e^{t+s}$ then by Lemma \ref{lemma:DK} we have $\psi_{q_k}$ takes at most 5 values that are consecutive.  Letting $x_i$ be the measure of the $i^{th}$ level set and appying the sublemma
implies the corollary. Otherwise by Lemma \ref{lemma:happy:times} and the sublemma imply the corollary.
\end{proof}

The next lemma is used to obtain (A0)-(A4). Its proof is similar to Lemma \ref{lem:in cpct} and is omitted. 
\begin{lemma}
\label{lem:A0 to A4} 
Given any $\epsilon>0$ there exists $M$ so that $\lambda(E_2)=\lambda(\{t<D_i:g_tX\in \mathcal{K} \text{ and for all } s\in [-3,3] \text{ we have }a_{f(t+s)},a_{f(t+s)+1},a_{f(t+s)+2}<M \})>D_i(1-\epsilon)\frac{99c}{100}$.  
\end{lemma}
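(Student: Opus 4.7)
My plan is to follow the template of the proof of Lemma~\ref{lem:in cpct}, applying its Vitali-type sublemma iteratively in the three cases $j \in \{0,1,2\}$. First I would set $h(t) = \chi_{\{g_tX \in \mathcal{K}\}}$, so that by the choice of the $D_i$ we have $\frac{1}{D_i}\int_0^{D_i} h(t)\,dt \ge \frac{99c}{100}$. The key geometric input is that a large partial quotient $a_{k+1}$ forces $g_u\hat{X}$ to leave compact subsets of $\mathcal{M}_1$ for a long time: concretely, for the curve $\gamma_k$ on $\hat{X}$ with holonomies $(q_k, \pm\|q_k\alpha\|)$, the square length $|g_u\gamma_k|^2 = e^{-2u}q_k^2 + e^{2u}\|q_k\alpha\|^2$ is convex in $u$ and attains its minimum, of order $1/\sqrt{a_{k+1}}$, at $u_* = \log(q_k\sqrt{a_{k+1}})$. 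Letting $\hat{C} \subset \mathcal{M}_1$ denote the image of $\mathcal{K}$ under forgetting marked points and $\delta_0 > 0$ the infimum of the systole on $\hat{C}$, the set $U_k := \{u : |g_u\gamma_k| \le \delta_0\}$ is an interval of length $\log(\delta_0^2 a_{k+1}) + O(1)$ located near $[\log q_k, \log q_{k+1}]$, on which $g_u\hat{X} \notin \hat{C}$ and hence $h(u) = 0$.

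I would then argue inductively for $j = 0,1,2$. For each $j$, define
\[ B_j := \{t \in [0, D_i] : g_tX \in \mathcal{K},\ \exists s \in [-3,3]\ \text{with}\ a_{f(t+s)+j} > M_j,\ a_{f(t+s)+j'} \le M_{j'}\ \text{for all}\ j' < j\}. \]
For $t \in B_j$ with witness $s$ and $k+1 := f(t+s) + j$, the minimality condition together with the recursion $q_n = a_n q_{n-1} + q_{n-2}$ yields bounds on $\log q_k$ and $\log q_{k+1}$ in terms of $t$ and $M_0, \dots, M_{j-1}$; combined with the explicit location of $U_k$ and the fact that $t \notin U_k$ (since $g_t\hat{X} \in \hat{C}$ forces $|g_t\gamma_k| > \delta_0$), one concludes that $U_k$ lies on one side of $t$, at distance at most $r_j = r_j(M_0, \dots, M_{j-1})$ from $t$, with length at least $\log M_j - O(1)$. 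Thus $h(t) = 1$ while $h \equiv 0$ on an interval of length $\ge \log M_j - O(1)$ situated within distance $r_j$ of $t$ --- exactly the hypothesis of the sublemma of Lemma~\ref{lem:in cpct} (with a harmless time-reversal if $U_k$ lies to the left of $t$).

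Applying that sublemma with $\epsilon' = \epsilon/3$, $\ell = 0$, and $r = r_j$ produces a threshold $L_j$, and I would choose $M_j$ large enough that $\log M_j \ge L_j + O(1)$, so that the sublemma applies and yields $\lambda(B_j) \le (\epsilon/3)(99c/100)D_i + 2 L_j$. Setting $M := M_2$ and using $E_2^c \subseteq B_0 \cup B_1 \cup B_2$, this gives $\lambda(E_2^c) \le \epsilon(99c/100) D_i$ for $i$ large enough that each $2L_j$ is negligible compared to $D_i$, and hence $\lambda(E_2) \ge (1-\epsilon)\frac{99c}{100} D_i$ as required. The main obstacle --- and the hardest part of the argument --- is the iterative calibration of the thresholds $M_j$ against $L_j$: since $L_j$ depends on $r_j$ which depends on $M_0,\dots,M_{j-1}$, the $M_j$ must be chosen in sequence so that the length of $U_k$ (of order $\log a_{k+1}$) dominates the sublemma's required window length at each stage. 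This calibration is entirely analogous to the choice of $\hat{\mathcal{K}}$ in terms of $L$ in the proof of Lemma~\ref{lem:in cpct} and presents no genuine obstruction.
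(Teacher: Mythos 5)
The paper actually omits the proof of Lemma~\ref{lem:A0 to A4} (it is only stated to be ``similar to Lemma~\ref{lem:in cpct}''), and your overall strategy --- excursions forced by large partial quotients, measured against the Vitali-type sublemma --- is indeed the intended one. But there is a genuine gap in your case $j=0$. You claim that for $t\in B_j$ ``the minimality condition \dots yields bounds on $\log q_k$ and $\log q_{k+1}$ in terms of $t$ and $M_0,\dots,M_{j-1}$'', so that $U_k$ lies within distance $r_j$ of $t$. For $j=0$ there are no prior thresholds, and the localization can fail: with $k+1=f(t+s)$, the excursion $U_k$ ends near $\log q_{f(t+s)}+\log\delta_0$, while $t$ is only known to lie in $[\log q_{f(t+s)}-3,\log q_{f(t+s)+1}+3)$; if $a_{f(t+s)+1}$ is enormous (which nothing in the definition of $B_0$ forbids), $t$ can sit near the right end of that window, at distance roughly $\log a_{f(t+s)+1}$ from $U_k$ --- unbounded, so no fixed $r_0$ works and the sublemma cannot be applied to $B_0$ as you defined it. (Concretely, $g_tX\in\mathcal{K}$ only forces $t\notin U_{f(t+s)}$, i.e.\ $t$ lies near one of the two ends of the window $[\log q_{f(t+s)},\log q_{f(t+s)+1})$; at the right end your designated zero-interval is far away.)

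The fix is routine but changes the structure: either run the induction in the order that matches the geometry --- treat the middle index first, since the window of $a_{f(t+s)+1}$ straddles $t+s$, so its excursion is automatically within distance $3+O(\log(1/\delta_0))$ of $t$ on one side, and only then define the $j=0$ and $j=2$ cases with the extra condition $a_{f(t+s)+1}\le M_1$ --- or, closer to what the authors intend, do not insist on a pure zero-interval at all: work with the union of the three windows $[\log q_{f(t+s)-1},\log q_{f(t+s)+2})$, note that it contains $t+s$ and has length at least $\log M-O(1)$, and use Lemma~\ref{lem:rot divergence} to bound the time spent in the projection $\hat{C}$ of $\mathcal{K}$ by $3\ell_{\hat{C}}$ there, so that the sublemma applies with $\ell=3\ell_{\hat{C}}$ (not $\ell=0$) and $r=3$. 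Either way you also need the time-reversed version of the sublemma stated explicitly (your ``harmless time-reversal'' is correct but is used essentially, e.g.\ already in your $j=0$ case, where the excursion always lies to the left of $t$), you should take $M=\max\{M_0,M_1,M_2\}$ rather than $M_2$ so that a minimal violated threshold always exists, and the constants should be tightened ($\epsilon' cD_i$ versus $\epsilon\frac{99c}{100}D_i$); these last points are cosmetic.
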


 By choosing $\epsilon=\frac 1 9$ in Lemmas \ref{lem:in cpct}, \ref{lem:cont in cpct} and \ref{lem:A0 to A4}, for each $r$, we may choose a sequence of $t$ going to infinity which is simultaneously in the three sets whose measure is bounded from below in  these Lemmas. For each $t$ there exists $s$ as in Lemma \ref{lemma:the:time} and this choice verifies (A6) and (A9). Consider $L=e^{s+t}$ and $c=e^{-r}$. Since $|s|<3$, by Lemma \ref{lem:A0 to A4} assumptions (A0-4) hold for this $L$ and $c$. Indeed, $C_1,C_2,C_3=M$ and $C_4$ is $e^{-3}$ times the minimum of the shortest distance between the marked points taken over surfaces in $\mathcal{K}$.  By Lemma \ref{lem:cont in cpct} (and the fact that the projection of $\mathcal{K}$ to $\mathcal{M}_1$ is compact) (A7) holds. Indeed if $\hat{C}$ is the projection of $\mathcal{K}$ to $\mathcal{M}$ and $\frac{c}{99}r>\ell_{\hat{C}}(u+2)$ (where $\ell_{\hat{C}}$ is as in Lemma \ref{lem:rot divergence}) then $L>q_{k+u}$. Moreover, by Corollary \ref{cor:happy times} for each $\eta>0$ there exists $\hat{c}_\eta$ so that (A5) holds. We now just need to show (A8) holds. If $e^{s+t}\in \{q_{f(t)},q_{f(t)+1}\}$ this is by Lemma \ref{lemma:DK}. Otherwise note $\psi_{nq_i}$ is at most $5+2n$ valued by Lemma \ref{lemma:happy:times}. By Lemma \ref{lem:in cpct} there exist $N_{r,\epsilon}$ so that $a_{f(t)+1}<N_{r,\epsilon}$ and thus $e^{s+t}=\ell q_j$ for some $\ell<N_{r,\epsilon}$.  We obtain  (A8) with $k_{e^{-r}}=5+2N_{r,\epsilon}$. 
\appendixmode

\section{The Sarnak conjecture and joinings of powers}
\label{sec:appendix:A}

The following result is a trivial modification of a note \cite{Harper:note}
of Harper, which is included for completeness. What is below is a lightly edited version of his note. See that note for connections with the work of other authors.
\begin{theorem}
Let $(X, T)$ be a topological dynamical system. Assume that there exists $C>1$ so that for every $n$, the set $B_n=\{m<n:T^m \text{ is not disjoint from }T^n\}$ has the property that if $m>m'\in B_n$ then $\frac m {m'}>C$ then $T$ is disjoint from M\"obius. Indeed for any continuous compactly supported function with integral 0, $F$, we have $\sum_{n=1}^M \mu(n)F(T^nx)=o(M)$. 
\end{theorem}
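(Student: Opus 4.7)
The plan is to apply the Bourgain--Sarnak--Ziegler (BSZ) orthogonality criterion to the bounded sequence $a_n := F(T^n x)$, which is bounded because $F$ is continuous and compactly supported. BSZ reduces the desired bound $\sum_{n \le M} \mu(n) F(T^n x) = o(M)$ to control of bilinear correlations of the shape
\[
C_{p,q}(M) \;:=\; \frac{1}{M} \sum_{n \le M} F(T^{pn}x) \, \overline{F(T^{qn}x)},
\]
averaged over pairs of distinct primes $p < q$ in a dyadic range $(P, 2P]$ with $M$ taken sufficiently large compared to $P$.

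Next I would translate the disjointness hypothesis into decay of $C_{p,q}(M)$. Whenever $p \notin B_q$, the transformations $T^p$ and $T^q$ are disjoint in Furstenberg's sense with respect to the relevant invariant measure $\lambda$, so the only ergodic $(T^p \times T^q)$-invariant joining with marginals $\lambda$ is the product $\lambda \times \lambda$. Combined with (unique) ergodicity of $T$, this forces
\[
\lim_{M \to \infty} C_{p,q}(M) \;=\; \left(\int F \, d\lambda\right)\left(\int \overline{F} \, d\lambda\right) \;=\; 0,
\]
since $F$ has integral zero. Thus every fixed good pair $(p,q)$ contributes $o(1)$ to the averaged correlation as $M \to \infty$.

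The lacunarity hypothesis on $B_n$ is then what dispatches the bad pairs. By lacunarity, $|B_n \cap [1,Q]| \ll \log Q / \log C$, so the number of bad pairs of primes $(p,q)$ in $(P, 2P]$ is $O(P \log P / \log C)$, a vanishing fraction of the $\sim P^2 / (\log P)^2$ total such pairs. Bounding bad pairs trivially by $\|F\|_\infty^2$ and good pairs via the decay established above, the averaged bilinear correlation required by BSZ is $o(1)$, and the theorem follows.

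The main obstacle will be the standard uniformization step in the BSZ framework: disjointness only yields pointwise convergence $C_{p,q}(M) \to 0$ for each fixed pair, not convergence that is uniform in $p,q$. Following Harper, this is handled by a truncation in which one first chooses a threshold $P_0 = P_0(\epsilon)$ so that primes beyond $P_0$ contribute negligibly via a Tur\'an--Kubilius type inequality exploiting the multiplicative structure of $\mu$, and then exploits the pointwise decay on the finitely many good pairs with $p, q \le P_0$. The modifications from \cite{Harper:note} needed for the present statement are essentially cosmetic.
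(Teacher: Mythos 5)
Your proposal follows essentially the same route as the paper, whose Appendix~A is a lightly edited reproduction of Harper's note: a Tur\'an--Kubilius truncation to primes below $e^{1/\tau}$, Cauchy--Schwarz over dyadic blocks, pointwise decay of the bilinear correlations $C_{p,q}$ whenever $T^p$ and $T^q$ are disjoint, and the $C$-lacunarity of the exceptional sets $B_n$ to absorb the bad pairs. The only caution worth flagging is that you present this as ``apply the BSZ criterion'' when the standard criterion demands correlation decay for \emph{every} pair of distinct primes in a dyadic range --- the content of Harper's argument, and of this theorem, is precisely to reopen the Cauchy--Schwarz step so that a lacunary bad set can be tolerated, a modification your outline does incorporate but whose description as ``cosmetic'' slightly understates its role.
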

Let $\mu$ denote the M\"obius function. 

To prove Theorem 1 we shall require a lemma concerning the additive function
$$\omega_\tau (n) := \underset{
p|n,
p\leq e^{
\frac 1 \tau}}{\sum}1.$$
\begin{lemma}
Define $\mu_\tau := \underset{
p\leq e
^{\frac 1 \tau}}{\sum}\frac 1 p$ and let $N$ be any natural number. Then we
have the following variance estimate:
$$\underset{
n\leq N}{\sum}
(\omega_\tau (n) - \mu_\tau )^
2 \leq N\mu_\tau + O(e^{
\frac 1 \tau }).$$
\end{lemma}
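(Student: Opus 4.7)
The plan is a standard second-moment computation in the spirit of the Tur\'an--Kubilius inequality. Write $y = e^{1/\tau}$, so that $\omega_\tau(n) = \sum_{p \le y} \mathbf{1}_{p \mid n}$ and $\mu_\tau = \sum_{p \le y} 1/p$. First I would expand the square:
\begin{equation*}
\sum_{n \le N} (\omega_\tau(n) - \mu_\tau)^2 \;=\; S_2 \;-\; 2\mu_\tau S_1 \;+\; \mu_\tau^2 N,
\end{equation*}
where $S_1 = \sum_{n \le N}\omega_\tau(n)$ and $S_2 = \sum_{n \le N}\omega_\tau(n)^2$. By interchanging the order of summation,
\begin{equation*}
S_1 \;=\; \sum_{p \le y} \lfloor N/p \rfloor \;=\; N\mu_\tau + O(\pi(y)).
\end{equation*}

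Next I would compute $S_2$ by swapping the order of summation and splitting diagonal from off-diagonal:
\begin{equation*}
S_2 \;=\; \sum_{p, q \le y}\#\{n \le N : p \mid n,\ q \mid n\} \;=\; S_1 \;+\; \sum_{\substack{p \ne q \\ p, q \le y}} \lfloor N/(pq) \rfloor,
\end{equation*}
since the $p = q$ contribution is exactly $\sum_{p \le y}\lfloor N/p \rfloor = S_1$, and for distinct primes $p, q$ the condition $p \mid n$ and $q \mid n$ is equivalent to $pq \mid n$. The key observation is that the off-diagonal piece is bounded above by $\sum_{p \ne q} N/(pq) = N(\mu_\tau^2 - \sum_{p \le y} 1/p^2) \le N\mu_\tau^2$.

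Plugging everything in yields
\begin{equation*}
\sum_{n \le N}(\omega_\tau(n) - \mu_\tau)^2 \;\le\; S_1 + N\mu_\tau^2 - N\sum_{p \le y}\frac{1}{p^2} - 2\mu_\tau S_1 + \mu_\tau^2 N,
\end{equation*}
and substituting $S_1 = N\mu_\tau + O(\pi(y))$ collapses the main terms to $N\mu_\tau$, leaving an error of size $O(\mu_\tau \pi(y))$. The last step is to verify $\mu_\tau \pi(y) = O(y) = O(e^{1/\tau})$: by Mertens' theorem $\mu_\tau = O(\log\log y)$ and by Chebyshev $\pi(y) = O(y/\log y)$, so the product is $O(y \log\log y/\log y) = O(y)$, as required.

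The entire argument is routine once the off-diagonal cancellation is spotted; the only point requiring a bit of care is ensuring that the error terms combine to $O(e^{1/\tau})$ rather than the naive $O(\pi(y)^2)$ one would get from rounding each $\lfloor N/(pq)\rfloor$ individually. This is achieved by comparing $\sum_{p\ne q}\lfloor N/(pq)\rfloor$ directly to $N\mu_\tau^2$ from above (using the unrounded inequality), which sidesteps the quadratic loss.
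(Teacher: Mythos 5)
Your proof is correct and follows essentially the same route as the paper's: both are the standard second-moment (Tur\'an--Kubilius) computation — expand the square, interchange the order of summation, split the $p=q$ diagonal from the $p\neq q$ off-diagonal, bound the off-diagonal sum $\sum_{p\neq q}\lfloor N/(pq)\rfloor$ above by the unrounded $N\mu_\tau^2$, and absorb the error $O(\mu_\tau\,\pi(e^{1/\tau}))=O(e^{1/\tau})$. The paper presents the same steps in compressed form; your write-up just names the partial sums $S_1,S_2$ and spells out the Mertens/Chebyshev estimates at the end.
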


Lemma 1 is a special case of the Tur\'{a}n-Kubilius inequality, but since the proof
is just a short calculation we shall give it in full. Expanding the sum in the statement we obtain
$$ \underset{
p,q\leq e
^{\frac 1 \tau}}
{\sum}
\underset{n\leq N}{\sum}
1_{p,q|n} - 2\mu_\tau
\underset{
p\leq e^{
\frac 1 \tau}}{\sum}
\lfloor \frac N p \rfloor + N\mu_\tau^ 2
$$
and on removing the square brackets, and paying attention to the diagonal contribution in the double sum, we see that is at most
$$\underset{
p,q\leq e^
{\frac 1 \tau}}{\sum}
[N/pq] - N\mu_\tau^2
 + N\mu_\tau + 2\mu_\tau \pi(e
^{\frac 1 \tau}),$$

\subsection{Completion of proof}
Let $F(n)=F(T^nx)$ and in view of Lemma 1 and the Cauchy-Schwarz inequality, we have that 
\begin{multline}\label{eq:key estimate}|\sum_{n=1}^N \mu(n)F(n)|=\\ |\frac 1 {\mu_\tau} \sum_{n\leq N}\mu(n)F(n) \sum_{p |n, p\leq e^{\frac 1 \tau}}1 +\sum_{n=1}^N\mu(n)F(n) \frac{\mu_\tau-\omega_\tau(n)}{\mu_\tau}|\\\leq \frac 1 {\mu_\tau}|\sum_{n=1}^N \mu(n)F(n)\sum_{p|n,p\leq e^{\frac 1 \tau}}1|+\sqrt{\frac{N(N\mu_\tau+O(e^{\frac 1 \tau}))}{\mu_\tau^2}}.
\end{multline}

Observe that for each $\tau$ there exists $N_0$ so that for all $N>N_0$ we have $\sqrt{\frac{N(N\mu_\tau+O(e^{\frac 1 \tau}))}{\mu_\tau^2}}<\sqrt{N}\sqrt{\frac{2N}{\mu_\tau}}=N\sqrt{\frac{2}{\mu_\tau}}.$

So now we control

$$ |\sum_{n\leq N} \mu(n)F(n) \sum_{p| n, p<e^{\frac 1 \tau}} 1|\leq |\sum_{n\leq N}\sum_{p|n,p<e^{\frac 1 \tau}}\mu(p)\mu(\frac n p)F(n)|+\sum_{n\leq N}2|\{p<e^{\frac 1 {\tau}}:p^2|n\}|\cdot \|F\|_{\sup} .$$
Because  $\sum_{n\leq N}2|\{p<e^{\frac 1 {\tau}}:p^2|n\}|$ is $O(N)$ we focus on the other term,  
\begin{equation}\label{eq:appendix bound}| \sum_{n\leq N}\sum_{p|n,p<e^{\frac 1 \tau}}\mu(p)\mu(\frac n p)F(n)|\leq \sum_{j\leq \log_2(N)} \sum_{2^j\leq k<2^{j+1}}| \mu(k)\sum_{p\leq \min\{e^{\frac 1 \tau},\frac Nk\}} \mu(p)F(pk)|.
\end{equation} We apply Cauchy-Schwartz to $ \sum_{2^j\leq k<2^{j+1}}| \mu(k)\sum_{p\leq \min\{e^{\frac 1 \tau},\frac Nk\}} \mu(p)F(pk)|$ and bound (\ref{eq:appendix bound}) by
\begin{multline*} \sum_{j\leq\log_2(N)}\sqrt{2^j\sum_{2^j\leq m<2^{j+1}}|\sum_{p\leq \min\{e^{\frac 1 \tau},\frac N m\}}\mu(p)F(pm)|^2}\leq \\ \sum_{j\leq \log_2(N)} \sqrt{2^j \sum_{p_1,p_2\leq \min\{e^{\frac 1 \tau}, \frac N {2^j}\}}  |\sum_{m=2^j}^{\min\{2^{j+1}, \frac N{p_1},\frac N {p_2}\}}F(p_1m)\overline{F(p_2m)}      |}.
\end{multline*}

  The contribution of the diagonal terms ($p_1=p_2$) is at most $2^j \sqrt{\pi (\min \{e^{\frac 1 \tau}, \frac N {2^j})\}} \|F\|_{\sup} $ where $\pi(n)$ is the number of primes less than or equal to $n$.
The contribution of the $p_1\neq p_2$ where $p_1$ and $p_2$ are not disjoint is at most 
$$2^j\sqrt{C \log (\min\{e^{\frac 1 \tau},\frac N {2^j}\})\pi (\min\{e^{\frac 1 \tau},\frac {N}{2^j})\}}\|F\|_{\sup}.$$ Summing over $j$ these terms give a contribution that is $O(N)$. 
Indeed, we estimate by  $\sum_{j\leq \log_2(N)} 2^j\sqrt{C\log(\pi(\frac{N}{2^j}))\pi(\frac{N}{2^j})}\leq \sum_{j=1}^k C 2^j O((k-j)+1)2^{\frac 1 2 (k-j)}$ for $k=\lceil \log_2(N)\rceil$. This is clearly $O(N)$.

For $\tau$ fixed we choose $M_0$ large enough so that for any $M>M_0$,  $p_1,p_2<e^{\frac 1 \tau}$ with $T^{p_1}$ disjoint from $T^{p_2}$, and $L\leq M$ we have $$|\sum_{n\leq L}F(p_1n)\overline{F(p_2n)}|<\tau M.$$ 
The contribution of the $p_1,p_2$ where $T^{p_1}$ and $T^{p_2}$ are disjoint and $2^j>M_0$ is at most
$$ \sqrt{2^j(\tau 2^j) \pi(\min \{e^{\frac 1 \tau},\frac N {2^j}\})^2}.$$ For fixed $\tau$, summing over $j$, this is also $O(N)$. Indeed we focus on $$\sum_{j:\frac{N}{2^j}<e^{\frac 1 \tau}}\sqrt{2^j(\tau 2^j) \pi(\min \{e^{\frac 1 \tau},\frac N {2^j}\})^2}$$ and observe that this is bounded by $O(N\tau \log(\frac 1 {\tau}))$. 
 If $N$ is large enough the terms when $2^j<M_0$ are also $O(N)$. Since $\mu_n \to \infty$ plugging this into the last line of 
(\ref{eq:key estimate}) and possibly choosing an even larger $N$ so that $\sqrt{\frac{N(N\mu_\tau+O(e^{\frac 1 \tau}))}{\mu_\tau^2}}<N\sqrt{\frac{2}{\mu_\tau}}$ completes the proof.

\section{Disjointness of powers for generic $3$-IET's}
\label{sec:appendix:B}
\begin{theorem}\label{thm:disjoint powers}For almost every $3$-IET, $T$ we have that $T^n$ is disjoint from $T^m$ for all $0<n<m$.
\end{theorem}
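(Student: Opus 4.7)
The plan is to verify the hypotheses of Proposition~\ref{prop:fact:qfact} for the pair $(T_1, T_2) = (T^n, T^m)$ for every $0 < n < m$, and then conclude via Corollary~\ref{cor:trivial:joining} that every ergodic joining of $T^n$ and $T^m$ is the product measure $\lambda \times \lambda$. The gain over Theorem~\ref{theorem:disjoint:powers} is that we need not restrict to pairs $(m',m)$ with $m'/m$ close to $1$; this flexibility is what makes the almost-everywhere statement ``much easier.''

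First I would exploit ergodicity of the geodesic flow on $\mathcal{M}_{1,2}$. For almost every $X \in \mathcal{M}_{1,2}$, the trajectory $\{g_t X\}_{t>0}$ is equidistributed with respect to Haar measure, which in particular means that for any compact $\mathcal{K}$ of positive measure the set $\{t : g_t X \in \mathcal{K}\}$ has positive density, and the diophantine constants $C_1,\ldots,C_4$ controlling (A1)--(A4) can be taken uniformly on this positive-density set. This is a stronger statement than what was needed in Proposition~\ref{prop:good:assump}: rather than merely asking for a non-zero weak-$*$ limit of $\nu_T$, we extract a set of ``good'' renormalization times of full (equivariant) density.

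Fix $0 < n < m$ and set $c = (m-n)/n$. For such generic $X$, I would build an infinite sequence of scales $L_i \to \infty$ such that the analogs of (A0)--(A9) hold with the roles of $m'$ and $m$ played by $n$ and $m$ respectively, i.e.\ $q_{k_i-1} < c L_i < q_{k_i}$ and all the Rokhlin-tower/compactness conditions of Section~\ref{sec:renorm} remain in force. Because each individual condition cuts out a positive-density subset in $t$, the intersection remains of positive density for arbitrary $c > 0$ (whereas in the main theorem one needs $c$ to be small in order to apply the slightly delicate shifting argument of Lemma~\ref{lemma:first:sum}). Then I would run the arguments of Section~\ref{sec:work} essentially verbatim to verify conditions (1)--(3) of Proposition~\ref{prop:fact:qfact}, producing nonzero $d$ and $\ell$ such that any ergodic joining $\sigma$ is $S^d \times S^{-\ell}$-invariant. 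Weak mixing of $T$ (which holds for a.e.\ 3-IET by \cite{BN}) together with Lemma~\ref{lemma:rudolph} then forces $\sigma = \lambda \times \lambda$.

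The hard step will be re-proving the analog of Lemma~\ref{lemma:first:sum} and Proposition~\ref{prop:technical:cond3} without the constraint $|m'/m - 1| \le \epsilon_*$: in the main argument the closeness of $m'$ and $m$ is used both to guarantee $(m-m')w_k \in [q_{k-1},q_k)$ automatically for the renormalization scale and to keep the correction terms from (\ref{eq:sum:rearrangement}) controlled in $\{-1,0,1\}$. For generic $X$, the equidistribution gives us enough room to pick $L_i$ exactly so that $(m-n)w_{k_i} \in [q_{k_i-1},q_{k_i})$ for any prescribed $c$; the combinatorial analysis in Claims~\ref{claim:hit:both}--\ref{claim:E0:works} is insensitive to whether $m-n$ is small relative to $n$, provided only that $(m-n)w_k < q_k$, which we are imposing by choice. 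I therefore expect these lemmas to carry through with only cosmetic changes, completing the proof.
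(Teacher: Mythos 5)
Your proposal is a genuinely different route from the paper's, and it has a fatal gap. The paper proves Theorem~\ref{thm:disjoint powers} in Appendix~\ref{sec:appendix:B} by an independent, elementary argument: a simpler Ratner-type disjointness criterion (Proposition~\ref{prop:criterion}), a genericity result for the joint (continued fraction, Ostrowski) expansion (Lemma~\ref{lem:all ostrowski}), and then an explicit construction of the sets $F_i, G_i$ using prescribed patterns such as $(10m,0),(10m,0),(4m,1),(10m,0)$ in the expansion. It does not pass through Proposition~\ref{prop:fact:qfact} at all.

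The gap in your plan is structural, not quantitative, and relaxing the smallness of $c$ does not remove it. The engine of Section~\ref{sec:work} is Lemma~\ref{lemma:first:sum}, which only controls the \emph{difference}
\[
S_1(y) - S_2(y) \;=\; \Bigl(\psi_{m w_k}(y)-\psi_{m w_k}(R^{q_k}y)\Bigr) - \Bigl(\psi_{m' w_k}(y)-\psi_{m' w_k}(R^{q_k}y)\Bigr) \in \{-1,1\}
\]
on a positive-measure set, using that $(m-m')w_k < q_k$. Denjoy--Koksma then bounds $|S_1|,|S_2|\le 4$, and all one can deduce is that \emph{at least one} of $S_1, S_2$ lies in $\{\pm 1,\pm 2,\pm 3\}$. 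This is exactly the content of Proposition~\ref{prop:technical:cond3}, where the conclusion says ``there exists $\hat{m}\in\{m,m'\}$'': the argument never identifies which of the two powers it is giving information about. Consequently Proposition~\ref{prop:satisfy:2a:2b:2c} delivers disjointness of $T^n$ from \emph{one} of $T^m$, $T^{m'}$, which is precisely why Theorem~\ref{theorem:disjoint:powers} concludes only that $B_n$ is lacunary and not that $B_n$ is empty. ``Running the arguments of Section~\ref{sec:work} essentially verbatim'' with a prescribed target pair $(n,m)$ therefore cannot work: you would still be forced to introduce an auxiliary $m'$, and you would still be left with the uncontrolled alternative $\hat{m}\in\{m,m'\}$. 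Eliminating this alternative requires new information about the individual Birkhoff sums $\psi_{mw_k}(\cdot)-\psi_{mw_k}(R^{q_k}\cdot)$, not merely their difference, and that is what the paper's explicit Ostrowski-pattern construction in Appendix~\ref{sec:appendix:B} supplies. (One might hope to set $m'=0$ to force $S_2=0$, but then $c=m/n$ and the scheme only treats $m\ll n$, again falling short of all pairs.)
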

We prove this by the following straightforward disjointness criterion:
\begin{prop}\label{prop:criterion} Let $T$ be an ergodic 3-IET, $R$ be an irrational rotation and $0<n<m$ be natural numbers.
Assume there exists $c>0$, $r\in \mathbb{N}$, a sequence $k_1,...$ sets $F_i$, $G_i$ so that for all $i$
\begin{enumerate}
\item $\underset{i \to \infty}{\lim}\, \underset{x \in F_i}{\max} |T^{nk_i}x-x|=0$
\item $\underset{i \to \infty}{\lim}\, \underset{x \in G_i}{\max} |T^{mk_i}x-R^{-1}x|=0$
\item$1-\lambda(F_i)<\lambda(G_i)-c.$
\end{enumerate}
Then $T^n$ and $T^m$ are disjoint.
\end{prop}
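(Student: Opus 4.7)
\medskip

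\noindent\textbf{Proof proposal.} The plan is to show that any ergodic $T^n \times T^m$-invariant joining $\sigma$ of $(X, T^n, \lambda)$ with $(X, T^m, \lambda)$ must coincide with $\lambda \times \lambda$. The idea is to first establish that $\sigma$ is invariant under the skew map $\mathrm{id} \times R^{-1}$, and then apply the obvious symmetric form of Lemma~\ref{lemma:rudolph}---using that the irrational rotation $R$ is ergodic on $([0,1),\lambda)$---to conclude $\sigma = \lambda \times \lambda$.

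First I would show that the rectangles $F_i \times G_i$ carry uniformly positive $\sigma$-mass. Because $\sigma$ has both marginals equal to $\lambda$, inclusion--exclusion gives
\begin{equation*}
\sigma(F_i \times G_i) \;\geq\; \lambda(F_i) + \lambda(G_i) - 1 \;>\; c,
\end{equation*}
where the last inequality is hypothesis (3). On this set of $\sigma$-mass at least $c$, hypotheses (1) and (2) say that $(T^n \times T^m)^{k_i}$ converges uniformly to $\mathrm{id} \times R^{-1}$. Combined with $T^n \times T^m$-invariance of $\sigma$, this says that for any continuous $h$ on $X \times X$,
\begin{equation*}
\int h \, d\sigma \;=\; \int h \circ (T^n \times T^m)^{k_i} \, d\sigma,
\end{equation*}
and on the piece $F_i \times G_i$ the right-hand integrand may be replaced, with vanishing error, by $h \circ (\mathrm{id} \times R^{-1})$.

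To upgrade this asymptotic mass-transport statement to genuine $\mathrm{id} \times R^{-1}$-invariance of $\sigma$ I would adapt the Ratner-style argument of Proposition~\ref{prop:Ratner:disjointness}: if $(\mathrm{id} \times R^{-1})_*\sigma \neq \sigma$, both being ergodic joinings, they are mutually singular, so for any small $\delta$ one finds a compact $K$ with $\sigma(K) > 1 - \delta$ and $(\mathrm{id} \times R^{-1})K \cap K = \emptyset$. The approximation above, together with the uniform lower bound $\sigma(F_i \times G_i) > c$, would then produce pairs of typical points contradicting this separation.

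The main obstacle is that $R$ does not commute with $T^n$ or $T^m$, so unlike in Proposition~\ref{prop:Ratner:disjointness} the pushforward $(\mathrm{id} \times R^{-1})_*\sigma$ need not be $T^n \times T^m$-invariant, and the mutual singularity dichotomy invoked above is not immediate. I would sidestep this by working with the commuting approximants $T^{mk_i}$ of $R^{-1}$: the map $\mathrm{id} \times T^{mk_i}$ does commute with $T^n \times T^m$ (since $T^{mk_i}$ is a power of $T^m$), so $(\mathrm{id} \times T^{mk_i})_*\sigma$ is a genuine $T^n \times T^m$-invariant joining for each $i$. Taking a weak-$*$ limit along a subsequence on which the approximation $T^{mk_i} \to R^{-1}$ is sharp on a set of $\sigma$-mass at least $c$ should produce $(\mathrm{id} \times R^{-1})_*\sigma = \sigma$. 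Making sure that this limit really gives invariance of the full measure $\sigma$, rather than only of its restriction to some positive-measure piece, is the crux of the argument.
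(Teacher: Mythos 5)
Your opening move, $\sigma(F_i\times G_i)\geq\lambda(F_i)+\lambda(G_i)-1>c$, and your overall strategy -- show $\sigma$ is $\mathrm{id}\times R^{-1}$-invariant and then conclude $\sigma=\lambda\times\lambda$ via the transposed form of Lemma~\ref{lemma:rudolph} -- match the paper's proof exactly. The interesting point is the obstruction you flag: as literally stated, $R^{-1}$ need not commute with $T^n$ or $T^m$, so $(\mathrm{id}\times R^{-1})_*\sigma$ is not another ergodic $T^n\times T^m$-invariant measure, and the mutual-singularity dichotomy underlying Proposition~\ref{prop:Ratner:disjointness} is genuinely unavailable. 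You are right that with a non-commuting $R$ the Ratner-style argument does not run.

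However, this is a slip of notation rather than a real gap in the argument. The paper's own (one-paragraph) proof begins ``it suffices to show that $\sigma$ is $\mathrm{id}\times T^{-1}$ invariant,'' and the verification of the hypotheses in the proof of Theorem~\ref{thm:disjoint powers} in fact produces sets on which $T^{mk_i+1}$ uniformly approximates the identity -- that is, sets on which $T^{mk_i}$ approximates $T^{-1}$, not the ambient circle rotation. With $T^{-1}$ in place of $R^{-1}$ in hypothesis~(2), Proposition~\ref{prop:Ratner:disjointness} applies with $S=T$, $d_1=0$, $d_2=1$: $\mathrm{id}\times T^{-1}$ commutes with $T^n\times T^m$, the drifting pairs are $(x,y)\in(F_i\times G_i)\cap K\cap(T^n\times T^m)^{-k_i}K$ together with $(x',y')=(T^{nk_i}x,\,T^{mk_i}y)$, and $\sigma(F_i\times G_i)>c$ guarantees such a pair exists once $\delta<c/2$. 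Then $\sigma$ is $\mathrm{id}\times T$-invariant, and ergodicity of $T$ plus the transpose of Lemma~\ref{lemma:rudolph} gives $\sigma=\lambda\times\lambda$.

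Your proposed workaround via weak-$*$ limits of $(\mathrm{id}\times T^{mk_i})_*\sigma$ does not close the gap, for exactly the reason you yourself raise: the uniform approximation $T^{mk_i}\approx R^{-1}$ holds only on $G_i$, and $\lambda(G_i)$ is merely bounded below by $c$, not tending to $1$. On the complement, the two pushforwards can differ by a macroscopic amount, and a weak-$*$ limit cannot detect what is happening on a fixed positive-but-not-full-measure set. This is precisely why the paper argues through Proposition~\ref{prop:Ratner:disjointness}, whose whole point is to extract rigidity from a positive-measure set of drifting pairs rather than from convergence of the pushed-forward measures as a whole -- and why the commuting approximant $T^{-1}$, rather than $R^{-1}$, is essential.
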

\begin{proof}[Sketch of proof] Let $\sigma$ be an ergodic joining of $T^n \times T^m$ that is a probability measure. Because is $T$ is ergodic it suffices to show that $\sigma$ is $id \times T^{-1}$ invariant. By the fact that ergodic probability measures are mutually singular or the same it suffices to show that $(id \times R^{-1})_*\sigma$ is not singular with respect to $\sigma$.
By our assumptions, for any $i$ we have $\sigma(F_i \times G_i)\geq  c $. Similarly to Section 2, $\sigma$ is not singular with respect to $(id \times R^{-1})_* \sigma$.
\end{proof}

For any $\alpha$ let $\FP{q_j\alpha}=(-1)^j\|q_j\alpha\|$, the signed distance of $R^{q_j}x$ from $x$. If $x \in [0,1)$ there exists $b_1,...$ so that $b_i\leq a_i$, if $b_i=a_i$ then $b_{i+1}=0$ and  $x=\sum_{i=1}^{\infty}b_i\FP{q_{i-1}\alpha}$. 
  Notice that for any fixed $\alpha$ the set of $x$ with (an allowable) Ostrowski expansion $b_1,...,b_k$ is an interval of size at least $\fp{q_{k+1}\alpha}$. 
\begin{lemma}\label{lem:all ostrowski}Given a 3-IET consider it as rotation by $\alpha$ induced on an interval $[0,x)$. Let $[a_1,\dots]$ be the continued fraction of $\alpha$ and $(b_1,...)$ be the $\alpha$-Ostrowski expansion of $x$. For $\lambda^2$ almost every $(\alpha,x)$ we have that for any ordered k-tuple of pairs $(c_1,d_1),...,(c_k,d_k)$  of natural numbers so that $d_i\leq c_i-1$ we have that there are infinitely many $i$ with $((a_i,b_i),...,(a_{i+k-1},b_{i+k-1}))=((c_1,d_1),...,(c_k,d_k))$.
\end{lemma}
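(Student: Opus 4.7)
The plan is to exhibit a measure-preserving ``renormalization'' map $\mathcal{T}$ on (almost all of) $(0,1)^2$ whose action on $(\alpha,x)$ corresponds to shifting the joint sequence $((a_i,b_i))_{i\geq 1}$, then to deduce the lemma from ergodicity of $\mathcal{T}$ via Birkhoff's ergodic theorem. I would define $\mathcal{T}(\alpha,x)=(G\alpha,H_\alpha(x))$, where $G(\alpha)=\{1/\alpha\}$ is the Gauss map and $H_\alpha:[0,1)\to[0,1)$ is the piecewise-affine map that sends a point with Ostrowski expansion $(b_1,b_2,\dots)$ relative to $\alpha$ to the point with Ostrowski expansion $(b_2,b_3,\dots)$ relative to $G\alpha$. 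Such an $H_\alpha$ is forced by the relationship $q_k(\alpha)=a_1 q_{k-1}(G\alpha)+q_{k-2}(G\alpha)$ and by the signs of $\FP{q_k\alpha}$; writing it down explicitly is routine.

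Next I would verify that $\mathcal{T}$ preserves a probability measure $\nu$ on $(0,1)^2$ equivalent to $\lambda^2$ and is ergodic with respect to $\nu$. This is the classical Gauss--Ostrowski skew product. In keeping with the moduli-space viewpoint of this paper, the same fact can be obtained by realizing $\mathcal{T}$ as the Poincar\'e return map of the geodesic flow $g_t$ on $\mathcal{M}_{1,2}$ to a suitable cross-section built from Rauzy--Veech induction on the torus with two marked points, and then invoking the ergodicity of $g_t$ recalled in the introduction.

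Granted ergodicity, fix an admissible pattern $P=((c_1,d_1),\dots,(c_k,d_k))$ with $d_i\leq c_i-1$, and consider the cylinder
\[
C_P=\{(\alpha,x):(a_i(\alpha),b_i(\alpha,x))=(c_i,d_i)\text{ for }1\leq i\leq k\}.
\]
The projection of $C_P$ to the first coordinate is the classical continued-fraction cylinder $\{(a_1,\dots,a_k)=(c_1,\dots,c_k)\}$, which is a nondegenerate open interval of $\alpha$'s. For each $\alpha$ in this interval, the observation recorded just before the lemma shows that the fiber of $C_P$ over $\alpha$ contains an Ostrowski subinterval of length at least $\FP{q_{k+1}\alpha}$; the side condition ``if $b_i=a_i$ then $b_{i+1}=0$'' never becomes active because $d_i<c_i$. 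By Fubini, $\lambda^2(C_P)>0$, hence $\nu(C_P)>0$. Applying Birkhoff's ergodic theorem to $\chi_{C_P}$ under $\mathcal{T}$, for $\lambda^2$-almost every $(\alpha,x)$ the orbit $\{\mathcal{T}^i(\alpha,x)\}_{i\geq 0}$ enters $C_P$ with positive density, which is exactly the statement that the pattern $P$ occurs starting at infinitely many indices $i$. A countable intersection over all admissible patterns gives the desired full-measure set.

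The main obstacle is the second step: verifying that the skew product $\mathcal{T}$ is ergodic with respect to a measure equivalent to $\lambda^2$. A direct change-of-variables computation is routine but notationally heavy; the cleanest route is the cross-section identification with the geodesic flow on $\mathcal{M}_{1,2}$, after which ergodicity is automatic from Masur--Veech type ergodicity. Once that ingredient is in place, the remaining steps (positivity of cylinder measure and Birkhoff) are entirely standard.
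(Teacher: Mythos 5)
Your proposal takes a genuinely different route from the paper. The paper first uses ergodicity of the Gauss map alone to produce, for a.e.\ $\alpha$, infinitely many indices $j$ at which the desired block $(c_1,\dots,c_{k+1})$ (note: $k+1$ digits, to control one extra ratio) appears in the continued fraction; then, \emph{fixing such an $\alpha$}, it argues purely in the $x$-variable that the set of $x$ failing to have the Ostrowski block $(d_1,\dots,d_k)$ at infinitely many of those $j$'s has no Lebesgue density points, because inside any ball $B(y,\|q_j\alpha\|)$ one can locate an Ostrowski cylinder subinterval of length $\geq \|q_{j+k+1}\alpha\|$, and the ratio $\|q_{j+k+1}\alpha\|/\|q_j\alpha\|$ is bounded below uniformly in $j$ given $a_{j+1},\dots,a_{j+k+1}$ are fixed. (The displayed bound $3^{-(j+1)}c_1\cdots c_{k+1}$ in the paper is a typo; the exponent should involve $k$, not $j$, and the $c_i$'s enter in the denominator, otherwise the density-point argument would not close.) Your proposal instead constructs the Gauss--Ostrowski skew product $\mathcal{T}(\alpha,x)=(G\alpha,H_\alpha(x))$ on $(0,1)^2$, checks positivity of $\nu(C_P)$, and invokes Birkhoff. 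This is a valid and conceptually attractive alternative, and your observation that the constraint ``$b_i=a_i\Rightarrow b_{i+1}=0$'' never triggers on the cylinder because $d_i\leq c_i-1$ is exactly right.

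The trade-off is that your route requires a genuinely nontrivial input that you (correctly) flag as the main obstacle: ergodicity of $\mathcal{T}$ with respect to a probability measure equivalent to $\lambda^2$. This is true --- it is the classical two-dimensional Gauss--Ostrowski (or Nakada-type) natural-extension system, and can also be seen via a Poincar\'e section of the geodesic flow on $\cM_{1,2}$ as you suggest --- but it is an appeal to an external theorem that is considerably heavier than what the lemma needs. The paper's version replaces the second use of ergodicity by the Lebesgue density theorem, using ergodicity only for the one-dimensional Gauss map, which is by far the more elementary statement. So: your argument is correct in outline and buys a cleaner conceptual picture (one ergodic theorem instead of ergodicity-plus-density-points), but at the cost of a stronger dynamical input; to turn it into a complete proof you would need to either cite a reference for the ergodicity of the skew product or carry out the cross-section/Rauzy--Veech identification explicitly, neither of which is as short as the paper's two-paragraph argument.
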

\begin{proof} For almost every $\alpha$ any $(k+1)$-tuple of natural numbers occurs infinitely often in its continued fraction expansion by the ergodicity of the Gauss map with respect to a fully supported finite invariant measure and the fact that having a fixed initial $(k+1)$-tuple $(c_1,\dots,c_{k+1})$ is a set of positive measure. For any $\alpha$ with this property, the set of $x$ so that the pair $(\alpha,x) $ satisfies the proposition is a set of full measure because the complement has no Lebesgue density points. Indeed let $\alpha$ have $a_{j+i}=c_i$ for $i\leq k+1$ and $y\in [0,1)$, then an interval of size at least  $\|q_{j+k+1}\alpha\|$ in $B(y,\|q_j\alpha|)$ have that the $j+1$ through $j+k$ terms of their Ostrowski expansion are $d_1,\dots d_{k-1}$. Since $\frac{\|q_{j+k+1}\alpha\|}{\|q_j\alpha\|}>3^{-(j+1)}c_1\cdots c_{k+1}$ we have the claim.
\end{proof}


\begin{proof}[Proof of Theorem \ref{thm:disjoint powers}]
By Lemma \ref{lem:all ostrowski} it suffices to show that any 3-IET given by inducing rotation by $\alpha$ on $[0,x)$ where the sequence $(a_1,b_1),...$ contains all $k$-tuples $(c_1,d_1),...,(c_k,d_k)$ with the condition that $c_i-1\geq d_i$ infinitely often satisfies the assumptions of Proposition \ref{prop:criterion} for some $c>0$. Let $(10m,0),(10m,0),(4m,1),(10m,0)$ be the pairs of $($continued fraction expansion, Ostrowski expansion$)$. Let $\ell$ be an index so that $(a_{\ell+1},b_{\ell+1}),(a_{\ell+2},b_{\ell+2}),(a_{\ell+3}, b_{\ell+3}),(a_{\ell+4},b_{\ell+4})=(10m,0),(10m,0),(4m,1),(10m,0)$ and $\ell+2$ is even (this can be done because the 8-tuple $$(10m,0),(10m,0),(4m,1),(10m,0),(10m,0),(4m,1),(10m,0),(10m,0)$$ occurs infinitely often). Let $x_{\ell-i}=\sum_{i=0}^{\ell-1}b_i\FP{q_i\alpha}$. Note there exists $r<q_\ell$ so that $x_{\ell-1}=R^{r}0.$ 

\noindent
\textbf{Sublemma:} There exists $j$ so that $\lambda(\{x:\sum_{i=0}^{q_{\ell+2}-1}\chi_{[0,x_{\ell-1})}(R^ix)\neq j\})<\frac 1 {100m^3}.$

Let $\phi_\ell(x)=\sum_{i=0}^{q_{\ell+2}-1}\chi_{[0,x_{\ell-1})}(R^ix)$
\begin{proof}If $\phi_\ell(x)\neq \phi_\ell (Rx)$ then $\chi_{[0,q_{\ell-1}(x))}(x)-\chi_{[0,x_{\ell-1})}(R^{q_k}x)\neq 0$. Since $\ell$ is even this means that $x \in [-\|q_{\ell+2}\alpha\|,0)\cup [x_{\ell-1}-\|q_{\ell+2}\alpha\|,x_{\ell-1}\|)$. Observe that $R^{r}([-\|q_{\ell+2}\alpha\|,0))=[x_{\ell-1}-\|q_{\ell+2}\alpha\|,x_{\ell-1})$. It follows that $\phi_\ell$ has two level sets, one on $\cup_{i=1}^{r} R^i([-\|q_{\ell+2}\alpha\|,0))$ and the other on its complement. $r\lambda([-\|q_{\ell+2}\alpha\|,0))\leq q_\ell \|q_{\ell+2}\|<\frac 1 {100m^3}$.
\end{proof}

Now consider $J=[0,x_{\ell-1}) \cup (J\setminus [0,x_{\ell-1})$. By the sublemma there exists $j$ so that $\lambda(\{x:\sum_{i=0}^{mq_{\ell+2}-1}\chi_{ [0,x_{\ell-1})}(R^ix)=mj\})>1-\frac m {100m^3}$ and  $\lambda(\{x:\sum_{i=0}^{nq_{\ell+2}-1}\chi_{ [0,x_{\ell-1})}(R^ix)=nj\})>1-\frac n {100m^3}>1-\frac m {100m^3}$. Now since $b_{\ell+3}=0$
\begin{multline*}
\fp{q_{\ell+2}\alpha}-\frac 1 {10m}\fp{q_{\ell+2}\alpha} <\fp{q_{\ell+2}\alpha}-\fp{q_{\ell+4}\alpha}\leq|(J\setminus [0,x_{\ell-1})|<\\
\fp{q_{\ell+2}\alpha}+\fp{q_{\ell+3}\alpha}<\fp{q_{\ell+2}\alpha}+\frac 1 {10m}\fp{q_{\ell+2}\alpha}
\end{multline*}
 and $R^i(J \setminus [0,x_{\ell-1})$ are disjoint for all $j<q_{\ell+3}$ we have 
$$\lambda(\{x:\sum_{i=0}^{mq_{\ell+2}-1}\chi_J(R^ix)=mj+1\})>mq_{\ell+2}\fp{q_{\ell+2}\alpha}-\frac 1 {100m^2}-mq_{\ell+2}\frac 1 {10m}\fp{q_{\ell+2}\alpha}$$ and 
$$\lambda(\{x:\sum_{i=0}^{nq_{\ell+2}-1}\chi_J(R^ix)=nj\})\geq 1-nq_{\ell+2}\fp{q_{\ell+2}\alpha}-\frac 1 {100m^2}-nq_{\ell+2}\frac 1 {10m}\fp{q_{\ell+2}\alpha}.$$
We have verified the assumptions of Proposition \ref{prop:criterion} with  $c=(m-n-\frac{m+n}{10m})\|q_{\ell+2}\alpha\|q_{\ell+2}-\frac{2}{100m^2}$. Since $q_{\ell+2}\|q_{\ell+2}\alpha\|>\frac{1}{10m+3}$ this is positive. 
\end{proof}


\begin{thebibliography}{xxxxxxxx}
\bibitem[BN]{BN}
M.~Boshernitzan, A. Nogueira.
 Generalized eigenfunctions of interval exchange maps. {\em Erg. Th.
Dynam. Sys.} {\bf 24} (2004), no. 3, 697--705.
\bibitem[Bou]{Bou} J.  Bourgain. {\em On the correlation of the Moebius function with rank-one systems.} J. Anal. Math. 120 (2013), 105--130.
\bibitem[BSZ]{Borgain:Sarnak:Ziegler}
J.~Bourgain, P.~Sarnak, T.~Ziegler. {\em Disjointness of Moebius from
  horocycle flows.}  From Fourier analysis and number theory to Radon
transforms and geometry, pp. 67--83, Dev. Math., 28, Springer, New York,  2013.
\bibitem[D]{D} H. Davenport, {\em On some infinite series involving arithmetical functions 2.} Quart. J. of Math. 8 313--320.
\bibitem[ELR]{ELR} E. H. El Abdalaoui, M. Lema\'{n}czyk, T. de la Rue. {\em On spectral disjointness of powers for rank-one transformations and M\"obius orthogonality} J. Funct. Anal. 266 (2014), no. 1, 284--317.
\bibitem[GT]{GT} B. Green, T. Tao. {\em The M\"obius function is strongly orthogonal to nilsequences.} Ann. of Math. (2) 175 (2012), no. 2, 541--566. 
\bibitem[K]{K} I. Katai, {\em A remark on a theorem of H. Daboussi.}
Acta Math. Hungar. 47 (1986), no. 1-2, 223--225. 
\bibitem[Kh]{Khinchin:book}
A.~Khinchin.  {\em Continued fractions.}
With a preface by B. V. Gnedenko.
Translated from the third (1961) Russian edition.
Reprint of the 1964 translation.
Dover Publications, Inc., Mineola, NY,  1997. xii+95 pp. 


\bibitem[Ha]{Harper:note}
A.~Harper. A different proof of a finite version of
VinogradovÕs bilinear sum inequality. \url{https://www.dpmms.cam.ac.uk/~ajh228/FiniteBilinearNotes.pdf}


\bibitem[Ra]{Ratner:Joinings}
M.~Ratner. Horocycle flows, joinings and rigidity of products.
{\em  Ann. of Math. (2)} {\bf  118}  (1983),  no. 2, 277--313.

\bibitem[Ru]{Rudolph:book}
D.~Rudolph.  {\em Fundamentals of measurable dynamics.}
Ergodic theory on Lebesgue spaces.
Oxford Science Publications. The Clarendon Press, Oxford University
Press, New York,  1990.
\bibitem[W]{W} Z. Wang. {\em M\"{o}bius disjointness for analytic skew products}. Preprint http://arxiv.org/pdf/1509.03183.pdf.

\end{thebibliography}
\end{document}